\let\etoolboxforlistloop\forlistloop 
\let\forlistloop\etoolboxforlistloop 
\DeclarePairedDelimiter\abs{\lvert}{\rvert}%
\DeclarePairedDelimiter\norm{\lVert}{\rVert}%
\let\predefinedC\c
\renewcommand{\c}[1]{\mathcal{#1}}
\renewcommand{\b}[1]{\mathbb{#1}}
\newcommand{\N}{\mathbb{N}}
\newcommand{\R}{\mathbb{R}}
\renewcommand{\P}{\mathbb{P}}
\DeclareMathOperator{\E}{\mathbb{E}}
\DeclareMathOperator{\Var}{Var}
\newcommand{\1}{\mathbbm{1}}
\newtheorem{theorem}{Theorem}[section]
\newtheorem{lemma}[theorem]{Lemma}
\newtheorem{remark}[theorem]{Remark}
\newtheorem{proposition}[theorem]{Proposition}
\newtheorem{definition}[theorem]{Definition}
\newtheorem{assumption}{Assumption}
\crefname{assumption}{Assumption}{Assumptions}
\renewcommand{\theassumption}{\Alph{assumption}}
\newlist{assumptionenum}{enumerate}{1} 
\setlist[assumptionenum]{label=(\theassumption\arabic*)}
\DeclareRobustCommand{\crefcompress}[1]{%
  \begingroup\@cref@compresstrue\cref{#1}\endgroup
}
\let\eqref\labelcref
\begin{document}

\title{Extended mean-field games with multi-dimensional singular controls and non-linear jump impact}

    \author{Robert Denkert\thanks{Humboldt University Berlin, Department of Mathematics, Unter den Linden 6, 10099 Berlin. Denkert gratefully acknowledges financial support through the International Research Training Group (IRTG) 2544 \textsl{Stochastic Analysis in Interaction}.} \quad \quad Ulrich Horst\thanks{Humboldt University Berlin, Department of Mathematics and School of Business and Economics, Unter den Linden 6, 10099 Berlin. Horst gratefully acknowledges support from DFG CRC/TRR 388 ``Rough Analysis, Stochastic Dynamics and Related Fields", Project B04. }}
    
    \maketitle

\vspace{-2.6mm}
\begin{abstract}
We establish a probabilistic framework for analysing extended mean-field games with multi-dimensional singular controls and state-dependent jump dynamics and costs. Two key challenges arise when analysing such games: the state dynamics may not depend continuously on the control and the reward function may not be u.s.c.~Both problems can be overcome by restricting the set of admissible singular controls to controls that can be approximated by continuous ones. We prove that the corresponding set of admissible weak controls is given by the weak solutions to a Marcus-type SDE and provide an explicit characterisation of the reward function. The reward function will in general only be u.s.c.~To address the lack of continuity we introduce a novel class of MFGs with a broader set of admissible controls, called MFGs of parametrisations. Parametrisations are laws of state/control processes that continuously interpolate jumps. We prove that the reward functional is continuous on the set of parametrisations, establish the existence of equilibria in MFGs of parametrisations, and show that the set of Nash equilibria in MFGs of parametrisations and in the underlying MFG with singular controls coincide. This shows that MFGs of parametrisations provide a canonical framework for analysing MFGs with singular controls and non-linear jump impact.    
\end{abstract}
    
    \textbf{ AMS Subject Classification:}{ 93E20, 91B70, 60H30}
    
    \textbf{ Keywords:}{ mean-field game, singular controls, Marcus-type SDE, parametrisation }


\section{Introduction}

Mean-field games (MFGs) are a powerful tool to analyse strategic interactions in large populations when each individual player has only a small impact on the behaviour of other players. Introduced independently by Huang et al \autocite{caines_large_2006} and Lasry and Lions \autocite{lasry_mean_2007}  MFGs have been successfully applied to many problems, ranging from banking networks and models of systemic risk \autocite{carmona_mean_2015}, to dynamic contracting problems \autocite{elie_tale_2019}, bitcoin mining \autocite{li_mining_2023} and the mitigation of epidemics \autocite{aurell_epidemics_2022}, and from problems of optimal trading under market impact \autocite{carmona_probabilistic_2015,huang_mean-field_2019,fu_mean_2021,fu_mean-field_2023,fu_meanfield_2024,cardaliaguet_mean_2018}, to models of optimal exploitation of exhaustible resources \autocite{chan_fracking_2017,graewe_maximum_2022}, economic growth \autocite{gozzi_growth_2022} and energy production \autocite{aid_entry_2021,dumitrescu_energy_2024,elie_energy_2021}.

In a standard $N$-player game, each player $i\in \{1,\dotso,N \}$ chooses an action $u_i$ to maximise an individual reward of the form
\begin{equation}\label{cost-fun}
    J^i(u) = \E\Big[ g(\bar{\mu}^N_T,X^i_T) + \int_0^T f(t,\bar{\mu}^N_t,X^i_t,u^i_t)dt \Big],
\end{equation}
subject to the $d$-dimensional state dynamics
\begin{equation}\label{state-without-singular}
    dX^i_t = b(t,\bar{\mu}^N_t,X^i_t,u^i_t) dt + \sigma(t,\bar{\mu}^N_t,X^i_t,u^i_t) dW^i_t,\quad X^i_0 = x_0.
\end{equation}
Here $W_1,\dotso,W_N$ are independent $m$-dimensional Brownian motions on some underlying filtered probability space, $u=(u_1,\dotso,u_N)$ the combined action of all players, each action $u^i=(u^i_t)_{t \in [0,T]}$ of player $i$ an adapted stochastic process, and $\bar\mu^N_t \coloneqq \frac 1 N \sum_{j=1}^N \delta_{X^j_t}$ the empirical distribution of the players' states at time $t\in [0,T]$.

In view of the diminishing impact of an individual player's action on other players' choices in large populations the existence of approximate Nash equilibria for large populations can be established using a representative agent approach. The idea is to first consider the optimisation problem of a representative agent where the empirical distribution of the players' states is replaced by an external measure flow $\tilde\mu$, and then to solve the fixed point problem $\tilde\mu = \c L(X^{\tilde\mu,*})$ where $X^{\tilde\mu,*}$ denotes the state process of the optimal strategy for the representative player given the measure flow $\tilde\mu$.\footnote{The idea of decoupling the macroscopic from the microscopic dynamics has also been applied to models of social interactions in e.g. \autocite{horst_equilibria_2006}}
Using the representative agent approach, a MFG can be formally described as follows:
\begin{align}\label{standard-MFG}
\begin{cases}
    1.&\text{fix a measure flow }[0,T]\ni t\mapsto \tilde\mu_t \in \c P_2(\R^d),\\
    2.&\text{solve the stochastic optimisation problem}\\
   & \sup_u \E\Big[g(\tilde{\mu}_T,X_T) + \int_0^T f(t,\tilde{\mu}_t,X_t,u_t)dt\Big] \\
   & \text{subject to the state dynamics} \\
    &dX_t = b(t,\tilde\mu_t,X_t,u_t) dt + \sigma(t,\tilde\mu_t,X_t,u_t) dW_t, ~ X_{0} = x_{0}.\\
    3.&\text{solve the fixed point problem }\c L(X^{\tilde \mu, *}) = \tilde\mu
\end{cases}
\end{align}
where $\c P_2(\R^d)$ denotes the space of all square-integrable probability measures on $\R^d$ equipped with the Wasserstein topology, $X^{\tilde \mu, *}$ denotes the state dynamics under an optimal policy $u^*$ and $\c L(X)$ denotes the law of a stochastic process $X$. 

The method to solve standard MFGs proposed in the initial paper by Lasry and Lions \autocite{lasry_mean_2007} is an analytical one where the the fixed point is characterised by a coupled forward-backward PDE system. The forward component is a Kolmogorov-Fokker-Planck equation describing the dynamics of the state process and the backwards component is the Hamilton-Jacobi-Bellman equation arising from the optimisation problem of the representative agent.
In \autocite{carmona_probabilistic_2013} Carmona and Delarue introduced a  probabilistic approach, where the fixed point is characterised in terms of a McKean-Vlasov forward-backward SDE (FBSDE) arising from a Pontryagin-type maximum principle.

A nonconstructive approach has been introduced by Lacker in \autocite{lacker_mean_2015} using a relaxed solution concept. The idea is to use the continuity of the reward functional of the representative agent in both his actions and the empirical distribution together with Berge's maximum principle to show that their best response map to a given measure flow $\tilde\mu$ is upper hemi-continuous. The existence of a Nash equilibria is established using the Kakutani-Fan-Glicksberg fixed point theorem.

An extension to MFGs with common noise has been considered by, e.g.~Carmona et al \autocite{carmona_mean_2016}. In the common noise case, the fixed point is random. This prevents an application of standard fixed point results. To overcome the problem of randomness, Carmona et al \autocite{carmona_mean_2016} introduced a notion of weak solutions to MFGs. Weak solutions are probability measures on path spaces that specify the distribution of the state and control processes.  

The existence of relaxed solutions to MFGs, respectively extended MFGs with singular controls was first established in \autocite{fu_mean_2017} and \autocite{fu_extended_2022}, respectively; MFGs with singular controls of bounded velocity were considered in \autocite{cao_approximation_2023}; MFGs with singular controls of finite fuel were considered in \autocite{guo_stochastic_2019}. MFGs with singular controls and their respective $N$-player approximations were studied in, e.g.\@ \autocites{campi_mean-field_2022,cao_approximation_2023,cao_mfgs_2022,guo_stochastic_2019,dianetti_ergodic_2023,cao_stationary_2022}. 

Extending the models considered in \autocite{fu_mean_2017,fu_extended_2022} our goal is to establish an existence of solutions result for extended MFGs with multi-dimensional singular controls $\xi \in \R^l$ and state/control-dependent jump dynamics and costs. This suggests to consider MFGs of the form 
\begin{align}\label{eq ill-behaved mean-field game singular control}
\begin{cases}
    1.&\text{fix a measure flow }[0,T]\ni t\mapsto \tilde\mu_t \in \c P_2(\R^d\times\R^l),\\
    2.&\text{solve the stochastic optimisation problem}\\
       &\sup_\xi \E \Big[g(\tilde\mu_T,X_T,\xi_T) + \int_0^T f(t,\tilde\mu_t,X_t,\xi_t) dt - \int_0^T c(t,X_t,\xi_t) d\xi_t\Big],\\
    &\text{over all non-decreasing, càdlàg processes $\xi:[0,T] \to \R^l$ s.t. the state dynamics}\\
    &dX_t = b(t,\tilde\mu_t,X_t,\xi_t) dt + \sigma(t,\tilde\mu_t,X_t,\xi_t) dW_t + \gamma(t,X_t,\xi_t) d\xi_t,\quad X_{0-} = x_{0-},\\
    3.&\text{solve the fixed point problem }\c L(X,\xi) = \tilde\mu.
\end{cases}
\end{align}
The above MFG is well-defined if the control $\xi$ is continuous. However - for reasons outlined in the next two paragraphs - the above MFG does not provide a good framework for analysing game-theoretic models with singular controls and non-linear jump impact.


Singular controls should be understood as limits of continuous (``regular'') controls or of controls with many small jumps, which are not continuous operations in the standard Skorokhod J1-topology on the space of c\`adl\`ag functions. This calls for the use of weaker topologies when working with singular controls. Unfortunately, new problems arise when working with weaker topologies. 

For instance, if the function $\gamma$ depends on the state or control variable, then the state process may not depend continuously on the control in the weak M1-topology.\footnote{For instance, $\xi^n\coloneqq 0\lor n(\cdot -1+1/n)\land 1 \to \1_{[1,2]}\eqqcolon\xi$ in the M1-topology while $\int_0^2 \xi^n_{t-} d\xi^n_t = \frac 1 2 \not\to \int_0^2 \xi_{t-} d\xi_t = 0$.} Likewise, if the cost term $c$ depends on the state or control variable, then the reward functional 
\[
    J(\tilde \mu, \xi) \coloneqq \E \Big[g(\tilde\mu_T,X_T,\xi_T) + \int_0^T f(t,\tilde\mu_t,X_t,\xi_t) dt - \int_0^T c(t,X_t,\xi_t) d\xi_t\Big]
\]
may not be upper semi-continuous in the control variable in the J2-topology. Specifically,  so-called ``chattering strategies'' may emerge where approximating a singular control by a sequence of controls with many small jumps may generate lower costs and hence higher rewards than employing the singular control itself; this effect has first been observed in \autocite{alvarez2000singular}. 

We show that the problems outlined above can be overcome by first defining the state dynamics and reward functionals for continuous controls and then restricting the set of admissible singular controls to those controls that can be approximated by continuous ones with respect to the weak M1-topology, a topology that has been successfully used in the context of singular control by many authors, including \autocite{cohen2021singular,fu_mean_2017,fu_extended_2022,denkert2023extended}.
Although the weaker Meyer-Zheng topology also has been utilised in, e.g.~\autocite{li_existence_2017, dianetti_nonzero-sum_2020}, we prefer to work with the $WM_1$ topology. The topology is weak enough to also have the properties that make the Meyer-Zheng topology appealing when working with singular controls, namely (i) being able to approximate jumps continuously, and (ii) having weak compactness conditions such as every set of bounded monotone functions being compact. At the same time, unlike the Meyer-Zheng topology, the $WM_1$ topology admits for an explicit and convenient representation of a metric via parametrisations.

In a first step, we establish an intuitive representation of our set of admissible weak controls. Specifically, we prove that the set of admissible (weak) controls corresponds to the weak solutions to a Marcus-type SDE. For continuous controls the Marcus-type SDE reduces to a standard SDE; for singular controls the idea is to smooth the discontinuities in the state process resulting from the singularities in the control process. 

In a second step, we provide an explicit representation of the corresponding reward function in terms of minimal jumps costs as in \autocite{denkert2023extended}. The approach extends the case of one-dimensional controls studied in, e.g.\@ \autocite{de_angelis_stochastic_2018,dufour_singular_2004} to multi-dimensional settings.
In \autocite{fu_extended_2022}, the author addresses singular mean-field games with multi-dimensional controls under the additional assumption that the dimensions contribute independently to the reward functional. This condition is not needed in this work. We represent the reward function using parametrisations of the state/control process. Parametrisations are continuous state/control processes running on exogenous time-scales; they have been successfully applied in \autocite{denkert2023extended} to solve a broad class of mean-field control problems with multi-dimensional singular controls.

Our choice of admissible strategies avoids chattering strategies but the resulting reward function may still only be u.s.c. To overcome this problem we introduce in a third step MFGs of parametrisations where the set of admissible controls is given by the set of parametrisations of the state/control process, that is, by distributions of state-control processes running on different time scales. The new time scales allow for a continuous interpolations of the discontinuities in the state/control process. 

The main advantage of working with parametrisations is that the reward function is continuous in parametrisations. Establishing the existence of equilibria in MFGs of parametrisations is hence not difficult; it can easily be derived from the well understood regular control case. 

Since every singular control gives rise to a parametrisation, MFGs of parametrisations allow for a larger class of admissible strategies, hence equilibria are more likely to exist. In a fourth and final step we, therefore, prove that the set of Nash equilibria in both games coincide under a natural condition on the jump coefficient $\gamma$. Under this assumption we show that any parametrisation gives rise to an admissible singular control and that any Nash equilibrium in a MFG of parametrisations induces an equilibrium (in weak form) in the original MFG in singular controls, and vice versa. This shows that MFGs of parametrisations provide a unified and transparent approach to solve MFGs with singular controls and non-linear jump dynamics.%
\footnote{We emphasise that our focus is on the existence of equilibria; we do not address the problem of uniqueness of equilibria.}  

When we can derive the existence of Nash equilibria for both the singular control MFG and the MFG of parametrisations by constructing Nash equlibria of a suitable sequence of approximating regular control MFGs, we only show compactness of this sequence. This approach does not rule out that there exist multiple limits, each possibly being a equilibrium to our original MFG. Moreover, we cannot guarantee that every Nash equilibrium can be obtained this approximation approach in the first place.

The remainder of this paper is organised as follows. Our MFG will be formally introduced in \cref{section derivation singular mfg} where we also state our main results and assumptions. \cref{section parametrisations} introduces parametrisations of singular controls and derives a transparent representation of the reward function. MFGs of parametrisations are introduced and solved in \cref{section mean-field game parametrisations} where we also establish the existence of equilibria in MFGs with singular controls. Section \ref{conclusion} concludes.

\textbf{Notation.} Given a probability measure $\mu$ and a random variable $X$, we denote by $\mu_X \coloneqq \mu\# X \coloneqq \mu\circ X$ the push-forward of $X$ under $\mu$. In particular, if $\mu$ denotes the distribution of a random vector $(X,Y)$, then $\mu_X$ denotes the law of $X$. By $\E^\mu$ we denote the expectation w.r.t.~a measure $\mu$. By $\xi:[0,T] \to \R^l$ we denote an $l$-dimensional càdlàg function that is non-decreasing in each component (``control''). We denote the space of continuous (respectively càdlàg) functions from $[0,T]$ to $\R^l$ by $C([0,T];\R^l)$ (respectively $D([0,T];\R^l)$). For a Polish space $(E,\c B(E))$ by $\c P_2(E)$ the space of all square-integrable probability measures equipped with the 2-Wasserstein topology. We denote by $\abs{\cdot}$ the euclidean norm.


\section{Our mean-field game}\label{section derivation singular mfg}

In this section we introduce our approach to solve MFGs with singular controls and state-dependent jump dynamics and jump costs. We employ the solution concept of weak solutions where the set of admissible controls is given by a set of probability measures on the canonical path space. To allow for jumps at the initial time we fix some $\varepsilon>0$ and define our path space as
\[
    D^0 = D^0([0,T];\R^d\times\R^l) \subseteq D([-\varepsilon,T];\R^d\times\R^l),
\]
as the subset of all càdlàg functions that are constant on $[-\varepsilon,0)$; the particular choice of $\varepsilon$ is not important for our results. We equip the set $D^0$ with the weak M1 ($WM_1$) topology.\footnote{The weak $M1$-topology will be formally introduced in \cref{definition wm1 parametrisation whitt} below.} 

To motivate our choice of action space and reward function we do not introduce the MFG right away but proceed step-by-step instead. We first consider the continuous control problem and then derive a ``canonical formulation'' for the resulting singular control problem based on approximations of singular controls by continuous ones. Subsequently, we derive an explicit representation of the resulting reward functional in terms of minimal jump costs. With these auxiliary results in hand, we finally introduce our MFG in singular controls.


\subsection{Continuous controls}

Singular controls are generally derived from, and should be understood as the limit of continuous (regular) controls.
We hence choose an approach based on continuous approximations of singular controls and first introduce weak solutions to the \emph{continuous} control problem. 

\begin{definition}
    We consider the canonical state space
    \[
    \tilde\Omega \coloneqq D^0([0,T];\R^d\times\R^l\times\R^m),
    \]
    equipped with the Borel $\sigma$-algebra, the canonical process $(X,\xi,W)$ and the canonical filtration $\tilde{\b F} \coloneqq \b F^{X,\xi,W}$. Given a measure flow $\tilde\mu\in\c P_2(D^0)$, we call a probability measure $\P\in\c P_2(\tilde\Omega)$ a weak solution to the SDE
    \begin{align}
        dX_t &= b(t,\tilde\mu_t,X_{t-},\xi_{t-}) dt + \sigma(t,\tilde\mu_t,X_{t-},\xi_{t-}) dW_t + \gamma(t,X_{t-},\xi_{t-}) d\xi_t,\qquad t\in [0-,T],\quad X_{0-} = x_{0-},
        \label{eq continuous singular control state dynamics}
    \end{align}
    if the dynamics is $\P$-a.s.\@ satisfied and if $W$ is an $m$-dimensional $\tilde{\b F}$-Brownian motion.
\end{definition}

We now define the set of continuous admissible (weak) controls as the set of all probability measures on the canonical path space with finite second moment that correspond to continuous weak solutions of the SDE \eqref{eq continuous singular control state dynamics}. The associated reward is defined as the expected payoff under that control.

\begin{definition}\label{definition continuous admissible controls}
    Given a measure flow $\tilde\mu\in\c P_2(D^0)$, we define the set of continuous admissible controls $\c C(\tilde\mu)$ as the set of all probability measures $\mu\in \c P_2(D^0)$ such that
    \begin{enumerate}[label=(\roman*)]
        \item $\xi$ is continuous and non-decreasing in every component $\mu$-a.s.,
        \item $\xi_{0-} = 0$, $\mu$-a.s.,
        \item $\mu$ is the marginal law of a weak solution $\P$ to the SDE \eqref{eq continuous singular control state dynamics}.
    \end{enumerate}
    Furthermore, we associate the following reward to a continuous admissible control $\mu\in\c C(\tilde\mu)$:
    \[
    J_{\c C}(\tilde\mu,\mu) \coloneqq \E^\mu \Big[g(\tilde\mu_T,X_T,\xi_T) + \int_0^T f(t,\tilde\mu_t,X_t,\xi_t) dt - \int_{[0,T]} c(t,X_t,\xi_t) d\xi_t\Big].
    \]
\end{definition}


\subsection{Singular controls and Marcus-type SDEs}

Our key motivation is to identify the set of admissible singular controls with the closure $\overline{\c C(\tilde\mu)}$ of the set of continuous controls in $D^0$ w.r.t.\@ the $WM_1$ topology. In other words, we assume that a singular control is admissible iff it can be approximated by continuous ones, and extend the reward functional $J_{\c C}(\cdot)$ to the set $\overline{\c C(\tilde\mu)}$ as follows:
\begin{align}\label{eq J singular controls as limit}
    J(\tilde\mu,\mu) \coloneqq \limsup_{\substack{(\mu^n)_n\subseteq \c C(\tilde\mu)\\\mu^n\to\mu\text{ in }D^0}} J_{\c C}(\tilde\mu,\mu^n), \quad \mu \in \overline{\c C(\tilde\mu)}.
\end{align}

In principle we could now formulate a MFG in terms of the abstract action space $\overline{\c C(\tilde\mu)}$ and the abstract u.s.c.~reward function $J$. By construction chattering strategies cannot emerge in this game. However, this setting would be too inconvenient to work with. 

Instead, we prove that under mild technical assumptions the set $\overline{\c C(\tilde\mu)}$ can be represented as the set of weak solutions to the Marcus-type SDE
\begin{align}
    dX_t &= b(t,\tilde\mu_t,X_{t-},\xi_{t-}) dt + \sigma(t,\tilde\mu_t,X_{t-},\xi_{t-}) dW_t + \gamma(t,X_{t-},\xi_{t-}) \diamond d\xi_t,\quad t\in [0-,T],\quad X_{0-} = x_{0-}.
    \label{eq singular control state dynamics}
\end{align}
The Marcus-type integration operator is defined by
\[
\gamma(t,X_{t-},\xi_{t-}) \diamond d\xi_t \coloneqq \gamma(t,X_{t-},\xi_{t-}) d\xi^c_t + \1_{\{\xi_{t-}\not= \xi_t\}} (\psi(t,X_{t-},\xi_{t-},\xi_t) - X_{t-}),
\]
where $\xi^c$ denotes the continuous part of the control $\xi$, and $\psi(t,x,\xi,\xi') = y_1$ where $y:[0,1] \to \R^d$ is the unique solution to the ODE
\begin{align}\label{eq marcus integration jump}
    dy_u &= \gamma(t,y_u,\zeta_u) d\zeta_u,\quad y_0 = x,
\end{align}
with $\zeta_u = (1-u)\xi + u \xi'$ being the linear interpolation from $\xi$ to $\xi'$. The idea of the Marcus-type dynamics is to continuously interpolate discontinuities in the state process through \emph{linear} interpolation of the jumps in the control variable. 
\begin{remark}
Under standard Lipschitz assumptions the ODE \eqref{eq marcus integration jump} has a unique solution for any non-decreasing, continuous path $\zeta$. If $\gamma(t,x,\xi) = \gamma(t)$, or if $\xi$ is continuous, then 
\[ 
    \gamma(t,X_{t-},\xi_{t-}) \diamond d\xi_t = \gamma(t,X_{t-},\xi_{t-}) d\xi_t
\]
and the Marcus-type SDE reduces to a standard SDE. It is the dependence of the jump term $\gamma$ on the state and/or control variable that requires a non-standard setting when analysing the state dynamics. 
\end{remark}

\begin{remark}
In the case that $d=1$, we can write \eqref{eq marcus integration jump} as a standard ODE,
\[
y'(u) = \gamma(t,y(u),(1-u)\xi + u\xi') (\xi'-\xi),\quad y(0) = x.
\]
By solving this class of non-linear ODEs, we can explicitly describe the behaviour of the system under the Marcus-type dynamics given in \eqref{eq singular control state dynamics}.
\end{remark}

By analogy to the weak solutions approach to continuous controls introduced above, we now define the set of (weak) singular controls as the set of all probability measures on the same canonical path space $(\tilde\Omega,\tilde{\c F},\tilde{\b F})$ that coincide with weak solutions of our Marcus-type SDE.

\begin{definition}\label{definition weak solution sde admissible control}
    Given a measure flow $\tilde\mu\in\c P_2(D^0)$, we call a probability measure $\P\in\c P_2(\tilde\Omega)$ a weak solution to the Marcus-type SDE \eqref{eq singular control state dynamics}
    if the dynamics are $\P$-a.s.\@ satisfied and $W$ is an $\tilde{\b F}$-Brownian motion.
    Furthermore, we define the set of admissible (weak) singular controls $\c A(\tilde\mu)$ as the set of all $\mu\in \c P_2(D^0)$ such that
    \begin{enumerate}[label=(\roman*)]
        \item $\xi$ is non-decreasing $\mu$-a.s.,
        \item $\xi_{0-} = 0$, $\mu$-a.s.,
        \item $\mu$ is the marginal law of a weak solution $\P$ to the SDE \eqref{eq singular control state dynamics}.
    \end{enumerate}
    We call a control $\mu$ Lipschitz continuous with Lipschitz constant $L$ if $\xi$ is $\mu$-a.s.\@ $L$-Lipschitz.
\end{definition}

We now introduce two assumptions that guarantee that the set of admissible controls coincides with the weak solutions to our Marcus-type SDE, i.e.\@ that $\overline{\c C(\tilde\mu)} = \c A(\tilde\mu)$. The first assumption reads as follows.

\begin{assumption}\label{assumptions state dynamics}
The coefficients
\[
b:[0,T]\times \c P_2(D^0)\times\R^d\times\R^l\to\R^d,\quad\sigma:[0,T]\times \c P_2(D^0)\times\R^d\times\R^l\to\R^{d\times m}\quad\text{and}\quad\gamma:[0,T]\times\R^d\times\R^l\to\R^{d\times l}
\]
satisfy the following conditions.
\begin{assumptionenum}
    \item\label{assumption b sigma continuous} $b,\sigma$ are continuous in $t$ and Lipschitz continuous in $m,x,\xi$ uniformly in $t$.
    \item\label{assumptions gamma bounded continuous} $\gamma$ is globally bounded and continuous in $t$ and Lipschitz continuous in $x,\xi$ uniformly in $t$.
    \item \label{assumptions gamma montone}
    For all $t\in [0,T]$, $x\in\R^d$ and every non-decreasing, continuous path $\xi:[0,1]\to\R^l$, the function given by 
    \[
    dy_u = \gamma(t,y_u,\xi_u) d\xi_u,\quad y_0 = x,
    \]
    is monotone (either increasing or decreasing) in each component. 
\end{assumptionenum}
\end{assumption}

\cref{assumption b sigma continuous,assumptions gamma bounded continuous} are standard. \cref{assumptions gamma montone} guarantees that only monotone interpolations of the jumps in the state process are allowed. The assumption is satisfied if, for instance, $\gamma$ is non-negative or non-positive. 

Our following second assumption ensures that the Marcus-type integration $\gamma(t,X_{t-},\xi_{t-}) \diamond d\xi_t$ aligns with the approximation of càdlàg paths by continuous paths in the $WM_1$-topology. The Marcus-type integration assumes a linear interpolation of the jumps in the control variable while we allow for arbitrary continuous approximation of the control variable. Although different approximations of the control variable may well generate different costs, they should not result in different state dynamics.

\begin{assumption}
\label{assumptions gamma path independent}
The integration $\gamma(t,x,\xi)\diamond d\xi$ is path-independent, by which we mean that in \eqref{eq marcus integration jump}, replacing $u\mapsto (1-u)\xi + u\xi'$ by any other continuous, non-decreasing path $u\mapsto\zeta_u$ with $\zeta_0 = \xi$ and $\zeta_1 = \xi'$ leads to the same $y_1$.
\end{assumption}


The above assumption is satisfied if $\xi$ is one-dimensional; this case has been extensively studied in the singular control literature. In the one dimensional case there is only way to interpolate up to a reparametrisation of time and the choice of interpolations is not relevant. The situation is very different in the multi-dimensional case. If $\gamma$ is independent of $(x,\xi)$ as in  \autocite{fu_mean_2017,fu_extended_2022}  the condition holds as well. If $\gamma$ is independent of the state variable, i.e.\@ if $\gamma(t,x,\xi) = \gamma(t,\xi)$, the path-independence turns out to be equivalent to the vector field $\gamma(t,\cdot)$ being conservative for each $t\in [0,T]$.

Under the above assumptions the set of admissible controls coincides with the set of weak solutions to our Marcus-type SDE as shown by the following result. The proof  follows from \cref{lemma reducing parametrisations to controls,lemma wm1 convergence parametrisation convergence,lemma parametrisation construct lipschitz approximations} given below.

\begin{proposition}
    Under \cref{assumptions gamma path independent,assumptions state dynamics}, for all $\tilde\mu\in \c P_2(D^0)$, it holds that $\overline{\c C(\tilde\mu)} = \c A(\tilde\mu)$.
\end{proposition}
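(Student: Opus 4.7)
The plan is to prove both inclusions using the framework of parametrisations, which provides a bridge between the $WM_1$-topology on càdlàg paths and the uniform topology on continuous paths. A parametrisation of a state/control pair interpolates each jump continuously in an auxiliary time scale, so that $WM_1$-convergence of controls can be reduced to uniform convergence of associated continuous paths; this equivalence is what \cref{lemma wm1 convergence parametrisation convergence} establishes. With that in hand, each of the two set-theoretic inclusions $\overline{\c C(\tilde\mu)} = \c A(\tilde\mu)$ can be attacked separately, relying on \cref{lemma reducing parametrisations to controls} to translate parametrised objects back into weak solutions of the Marcus-type SDE.

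For the inclusion $\overline{\c C(\tilde\mu)} \subseteq \c A(\tilde\mu)$, I would take a sequence $\mu^n \in \c C(\tilde\mu)$ with $\mu^n \to \mu$ in the $WM_1$-topology. Each $\mu^n$ is the marginal law of a weak solution to the standard SDE \eqref{eq continuous singular control state dynamics}, for which the continuous control $\xi^n$ makes the Marcus-type integral $\gamma\diamond d\xi^n$ coincide with the ordinary integral $\gamma\, d\xi^n$. Lifting each $\mu^n$ to its trivial (identity-time) parametrisation and invoking \cref{lemma wm1 convergence parametrisation convergence}, I would extract a uniform limit whose restriction to the new time-scale solves a standard SDE; \cref{lemma reducing parametrisations to controls} then re-expresses this limit as a weak solution to the Marcus-type SDE \eqref{eq singular control state dynamics}, giving $\mu \in \c A(\tilde\mu)$. \cref{assumptions gamma path independent} is essential here because different approximating sequences traverse the jumps of the limiting $\xi$ along different continuous paths, and only path-independence ensures all such limits produce the same jump response as the linear interpolation prescribed by the Marcus operator.

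For the inclusion $\c A(\tilde\mu) \subseteq \overline{\c C(\tilde\mu)}$, I would take any $\mu \in \c A(\tilde\mu)$ and apply \cref{lemma parametrisation construct lipschitz approximations} to produce a sequence of Lipschitz parametrisations whose associated controls converge to $\mu$ in $WM_1$. Since each Lipschitz parametrisation has a genuinely continuous control component, its dynamics reduce from Marcus-type to ordinary SDE form, so it lies in $\c C(\tilde\mu)$. Combining the two inclusions yields the claim.

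The main obstacle I anticipate is in the forward inclusion: approximating singular controls $\xi^n$ spend vanishing intervals of physical time traversing the jumps of $\xi$, and during these intervals the drift and diffusion terms contribute negligibly while the $\gamma\, d\xi^n$ term converges to an object that depends on how the jump is traversed. Extracting a tight limit on the enlarged time-scale and identifying it with the Marcus jump rule requires both \cref{assumptions gamma path independent} to make the limit well-defined and \cref{assumptions gamma montone} to keep parametrisations monotone and hence tight, and this step—rather than the routine reverse inclusion via Lipschitz approximation—is the technical heart of the proof.
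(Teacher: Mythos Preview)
Your proposal is correct and follows essentially the same approach as the paper: the inclusion $\overline{\c C(\tilde\mu)}\subseteq\c A(\tilde\mu)$ is obtained by applying \cref{lemma wm1 convergence parametrisation convergence} (with constant $\tilde\mu^n=\tilde\mu$) to lift the $WM_1$-convergent continuous controls to convergent parametrisations and then \cref{lemma reducing parametrisations to controls} to identify the limit as a weak solution of the Marcus-type SDE, while the reverse inclusion $\c A(\tilde\mu)\subseteq\overline{\c C(\tilde\mu)}$ comes from \cref{lemma parametrisation construct lipschitz approximations}. The only point you leave implicit---also implicit in the paper---is that any $\mu\in\c A(\tilde\mu)$ admits at least one parametrisation $\nu\in\bar{\c A}(\tilde\mu)$ to which \cref{lemma parametrisation construct lipschitz approximations} can be applied; this is immediate from the Marcus construction itself, since inserting the jump ODE trajectories \eqref{eq marcus integration jump} at each discontinuity of $\xi$ (with a suitable time change) yields such a parametrisation.
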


\subsection{The MFG}

Having established the ``natural'' state dynamics for singular controls with state-dependent jump dynamics and jumps costs we now establish a more transparent and canonical representation of our reward functional. Using an approach introduced in  \autocite{denkert2023extended} we show in \cref{lemma J control J sup parametrisations} below that the reward functional admits the following canonical representation
\begin{equation} \label{J1}
    J(\tilde \mu, \mu)
    = \E^\mu \Big[g(\tilde\mu_T,X_T,\xi_T) + \int_0^T f(t,\tilde\mu_t,X_t,\xi_t) dt - \int_0^T c(t,X_t,\xi_t) d\xi_t^c
    - \sum_{t\in [0,T]} C_{D^0}(t,X_{t-},\xi_{t-},\xi_t) \Big],
\end{equation}
for all $\mu\in\c A(\tilde\mu)$ where the function $C_{D^0}$ describes minimal jump costs. It is defined as follows.\footnote{The proof of \autocite[Lemma 3.2]{denkert2023extended} shows that under \cref{assumptions state dynamics,assumptions reward and cost functions} the minimum in \eqref{C_D} is indeed attained. }

\begin{definition}[{see also \autocite[Definition 3.1]{denkert2023extended}}]
    \label{definition C_D0}
    For $t\in [0,T]$, $x\in\R^d$ and $\xi,\xi'\in\R^l$ such that $\xi\leq \xi'$ component-wise, we define
    \begin{equation} \label{C_D}
    C_{D^0} (t,x,\xi,\xi') \coloneqq \min_{(y,\zeta)\in\Xi(t,x,\xi,\xi')} \int_0^1 c(t,y_u,\zeta_u) d\zeta_u,
    \end{equation}
    over the set $\Xi(t,x,\xi,\xi')$ of all non-decreasing, continuous paths $(y,\zeta):[0,1]\to \R^d\times\R^l$ such that $y_0 = x$, $\zeta_0 = \xi$, $\zeta_1 = \xi'$ and
    \[
    dy_u = \gamma(t,y_u,\zeta_u) d\zeta_u,\qquad u\in [0,1].
    \]
\end{definition}

With an explicit representation of both the set of admissible controls and the reward function in hand, we are now ready to introduce the following canonical MFG for singular controls.
\begin{align}
\begin{cases}
    1.&\text{fix a measure flow }[0,T]\ni t\mapsto \tilde\mu_t \in \c P_2(\R^d\times\R^l),\\
    2.&\text{solve the weak stochastic optimisation problem}\\
     & \sup_{\mu \in \c A(\tilde\mu)} \E^\mu \Big[g(\tilde\mu_T,X_T,\xi_T) + \int_0^T f(t,\tilde\mu_t,X_t,\xi_t) dt - \int_0^T c(t,X_t,\xi_t) d\xi_t^c \\ 
     & \qquad \qquad - \sum_{t\in [0,T]} C_{D^0}(t,X_{t-},\xi_{t-},\xi_t) \Big],\\
    &\text{subject to the state dynamics}\\
    &dX_t = b(t,\tilde\mu_t,X_{t-},\xi_{t-}) dt + \sigma(t,\tilde\mu_t,X_{t-},\xi_{t-}) dW_t + \gamma(t,X_{t-},\xi_{t-}) \diamond d\xi_t,\quad X_{0-} = x_{0-}\\
    3.&\text{solve the fixed point problem } \mu^* = \tilde\mu, \text{ where $\mu^*$ denotes the optimal weak control}.
\end{cases}
\label{eq mean-field game singular control general definition}
\end{align}

We emphasise that the above game is still non-standard as the reward function will in general only be u.s.c. To overcome this problem we later introduce MFGs of parametrisations.  The benefit of working with parametrisations is that the reward function is continuous as a function of parametrisations and the set of ``admissible parametrisations'' is compact, at least when restricted to bounded velocity controls.  

To state our existence of equilibrium result, we need to introduce the following additional assumption. \crefcompress{assumption f g continuous,assumption c continuous,assumption f quadratic growth,assumption c linear growth} are standard. The strict growth \cref{assumption g growth} on $g$ with $p > 2$ is needed to ensure the convergence of Nash equilibria in the bounded velocity case to an equilibrium in the unbounded velocity case; a similar approach has been taken in \autocite{fu_mean_2017}.

\begin{assumption}\label{assumptions reward and cost functions}
The reward and cost functions
\[
f:[0,T]\times \c P_2(D^0)\times\R^d\times\R^l\to\R,\quad g:\c P_2(D^0)\times\R^d\times\R^l\to\R\quad\text{and}\quad c:[0,T]\times\R^d\times\R^l\to\R^{1\times l}
\]
satisfy the following conditions.
\begin{assumptionenum}
    \item\label{assumption f g continuous} $f$ and $g$ are continuous.
    \item\label{assumption c continuous} $c$ is locally uniformly continuous.
    \item\label{assumption f quadratic growth} There exists a constant $C_1 > 0$ such that for all $t\in [0,T]$, $m\in\c P_2(\R^d\times\R^l)$ and $(x,\xi)\in\R^d\times\R^l$,
    \[
    |f(t,m,x,\xi)| \leq C_1 (1 + \c W_2^2(m,\delta_0) + |x|^2 + |\xi|^2).
    \]
    \item\label{assumption g growth} There exist $p > 2$ and constants $C_2,C_3 > 0$ such that for all $m\in\c P_2(\R^d\times\R^l)$ and $(x,\xi)\in\R^d\times\R^l$,
    \[
    - C_2(1 + \c W_2^2(m,\delta_0) + |x|^2 + |\xi|^p) \leq g(m,x,\xi) \leq C_3(1 + \c W_2^2(m,\delta_0) + |x|^2 - |\xi|^p).
    \]
    \item\label{assumption c linear growth} There exists a constant $C_4 > 0$ such that for all $t \in [0,T]$ and $(x,\xi)\in \R^d\times\R^l$,
    \[
    |c(t,x,\xi)| \leq C_4(1 + |x| + |\xi|).
    \]
\end{assumptionenum}
\end{assumption}

We are now ready to state the main result of this paper. It  will be obtained as a corollary to a more general result on the existence of equilibria in mean-field games of parametrisations established in \cref{section mean-field game parametrisations}. Specifically, the proof follows by combining \cref{theorem mean-field game parametrisations existence nash equilibrium,theorem mean-field game parametrisation nash equilibrium is singular control nash equilibrium} given below. 

\begin{theorem}\label{theorem mean-field game singular control nash equilibrium}
    Under \cref{assumptions state dynamics,assumptions gamma path independent,assumptions reward and cost functions}, the mean-field game \eqref{eq mean-field game singular control general definition} has a Nash equilibrium.
\end{theorem}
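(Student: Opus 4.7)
The plan is to route the argument through the auxiliary MFG of parametrisations, where the set of admissible strategies is enlarged by re-parametrising state/control paths on an exogenous time-scale so that jumps are interpolated continuously. On parametrisations the state dynamics are governed by an ordinary SDE (no jumps in the reparametrised time) and the reward functional in \eqref{J1} is replaced by a functional that is \emph{continuous} in the parametrised control, rather than merely upper semi-continuous in the $WM_1$-topology. Since every admissible singular control $\mu\in\c A(\tilde\mu)$ arises as the push-forward of a parametrisation under the time change, and (under the monotonicity and path-independence conditions on $\gamma$) every parametrisation projects back to an admissible singular control with the same reward via the minimal-jump-cost $C_{D^0}$ of \cref{definition C_D0}, the two games should have the same set of Nash equilibria; this is the content of \cref{theorem mean-field game parametrisation nash equilibrium is singular control nash equilibrium}.

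First I would establish existence of a Nash equilibrium in the parametrisation MFG restricted to parametrisations with a fixed Lipschitz bound $L$ on the control (bounded velocity). In this restricted setting the set of admissible parametrisations is tight, and compact in the weak topology on the canonical path space: the Lipschitz bound controls the modulus of continuity of $\xi$, while \crefcompress{assumption b sigma continuous,assumptions gamma bounded continuous} together with a Gr\"onwall estimate give uniform second-moment bounds on $X$. The continuity of the parametrised reward functional, combined with Berge's maximum principle, yields that the best-response correspondence $\tilde\mu\mapsto \arg\max J(\tilde\mu,\cdot)$ is upper hemi-continuous with non-empty, convex, compact values; applying the Kakutani--Fan--Glicksberg fixed-point theorem to $\tilde\mu\mapsto \{\c L(X,\xi):\mu\in \arg\max J(\tilde\mu,\cdot)\}$ produces a Nash equilibrium $\mu^L$ of the $L$-bounded-velocity parametrisation game. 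This is the content of \cref{theorem mean-field game parametrisations existence nash equilibrium}.

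Next I would pass to $L\to\infty$ to remove the velocity bound. The strict growth bound \cref{assumption g growth}, which penalises $|\xi_T|^p$ with $p>2$ from above, provides the crucial a priori estimate: at any equilibrium, comparing with the trivial strategy $\xi\equiv 0$ (whose reward is bounded below thanks to \cref{assumption f quadratic growth,assumption g growth,assumption c linear growth}) forces $\E^{\mu^L}[|\xi_T|^p]$ to be bounded uniformly in $L$. Since $p>2$, this yields tightness of $(\mu^L)_L$ in $\c P_2(D^0)$ together with uniform integrability of the terms appearing in \eqref{J1}; any $WM_1$-limit point $\mu^*$ then inherits the equilibrium property by continuity of the parametrised reward and stability of the best-response correspondence along the approximating sequence, after which one translates back to the singular control MFG via \cref{theorem mean-field game parametrisation nash equilibrium is singular control nash equilibrium} to obtain a Nash equilibrium of \eqref{eq mean-field game singular control general definition}.

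The main obstacle I anticipate is the passage $L\to\infty$: one must rule out ``chattering at infinite velocity'' in the limit, which is precisely what the strict $p>2$ growth in \cref{assumption g growth} is designed to prevent, and one must then verify that the limiting best response is still a best response in the \emph{unrestricted}-velocity class rather than merely within the $L$-bounded approximating class. This step crucially relies on the continuity of $J$ on the larger space of parametrisations — indeed it is the whole reason for introducing parametrisations in the first place, since on $\c A(\tilde\mu)$ with the $WM_1$-topology one would only have upper semi-continuity, which would not suffice to propagate the equilibrium property to the limit.
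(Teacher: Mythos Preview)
Your proposal is correct and follows essentially the same route as the paper: pass to the MFG of parametrisations, solve the bounded-velocity case first, obtain uniform $p$-th moment bounds via comparison with $\xi\equiv 0$ using \cref{assumption g growth}, extract a limit whose equilibrium property is preserved by the continuity of $J$ on parametrisations, and translate back via \cref{theorem mean-field game parametrisation nash equilibrium is singular control nash equilibrium}. The only presentational difference is that the paper identifies the bounded-velocity parametrisation game with a standard regular-control MFG and invokes Lacker's existence theorem directly rather than redoing the Berge--Kakutani argument on parametrisations; also note that \cref{theorem mean-field game parametrisations existence nash equilibrium} already encompasses the passage $L\to\infty$, not just the bounded-velocity step you attribute to it.
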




\section{Parametrisations of singular controls}\label{section parametrisations}

The goal of this section is to introduce the concept of parametrisations for the previously defined (weak) singular controls. To this end, we will first recall the notion of a $WM_1$-parametrisation of a càdlàg path, and then extend this idea to singular controls following the approach in \autocite{denkert2023extended}.


\subsection{Parametrisations of càdlàg paths}

Parametrisations are a powerful tool to formalise the idea of interpolating discontinuities of càdlàg functions. Loosely speaking a parametrisation of a càdlàg function is a smoothed modification of that function, running on a different time scale. The precise definition is as follows. 

\begin{definition}[\autocites(Chapter 12){whitt2002stochastic}]\label{definition wm1 parametrisation whitt}
    The \emph{thick graph} $G_y$ of a càdlàg path $y = (x,\xi)\in D^0$ is given by
    \[
    G_y \coloneqq \bigl\{(z,s) \in \R^{d+l}\times [0,T] \bigm\vert z^i \in [y^i(s-)\land y^i(s),y^i(s-)\lor y^i(s)]\text{ for all components }i = 1,\dotso,d+l \bigr\},
    \]
    and equipped with the order relation
    \[
    (z_1,s_1) \leq (z_2,s_2)\text{ if }\begin{cases}s_1 < s_2,\text{ or }\\s_1 = s_2\text{ and }|y^i(s-) - z^i_1| \leq |y^i(s-) - z^i_2|\text{ for all }i=1,\dotso,d+l.\end{cases}
    \]
    A \emph{$WM_1$-parametrisation of $y$} is a continuous non-decreasing (with respect to the above order relation) mapping $(\hat y,\hat r):[0,1] \to G_y $ with
    \[
    \hat y(0) = y(0-),\qquad \hat y(1) = y(T),\qquad \hat r(0) = 0,\qquad \hat r(1) = T.
    \]
    For $y,z\in D^0$ we define
    \[
        d_{WM_1}(y,z) \coloneqq \inf_{\substack{(\hat y,\hat r)\text{ parametrisation of }y\\(\hat z,\hat s)\text{ parametrisation of }z}} \{\norm{\hat y - \hat z}_{\infty} \lor \norm{\hat r - \hat s}_{\infty}\},
    \]
    and say that $y_n\to y$ in the $WM_1$-topology if and only if $d_{WM_1}(y_n,y) \to 0$. 
\end{definition}    

To link parametrised and non-parametrised paths, we denote for any time scale $\bar r :[0,1] \to [0,T]$ its generalised inverse function by
\[
r_{t} \coloneqq \inf\{v \in [0,1] | \bar r_v > t\}.
\]

\begin{definition}\label{definition unparametrisation map s}
    Given a parametrised path $(\bar x,\bar\xi,\bar r)$, we can recover the unparametrised path via the map
    \begin{align}
        \c S:(\bar x,\bar\xi,\bar r) \mapsto (\bar x \circ r,\bar\xi\circ r).
    \end{align}
    We define the domain $\c D(\c S)$ of $\c S$ as the set of all paths $(\bar x,\bar\xi,\bar r)\in C([0,1];\R^d\times\R^l\times [0,T])$ such that
    \begin{enumerate}[label=(\roman*)]
        \item $\bar\xi,\bar r$ are non-decreasing,
        \item $\bar r_0 = 0$ and $\bar r_1 = T$,
        \item for every interval $[a,b]\subseteq [0,1]$ where $\bar r$ is constant, $\bar x$ is monotone in each component on $[a,b]$.
    \end{enumerate}
\end{definition}

The domain $\c D(\c S)$ coincides with the set of all paths $(\bar x,\bar\xi,\bar r)$ that are $WM_1$-parametrisations of some path $(x,\xi)\in D^0([0,T];\R^d\times\R^l)$ with non-decreasing $\xi$, and for any such path it holds that
\[
\c S(\bar x,\bar\xi,\bar r) = (x,\xi).
\]

The following lemma will be key to our subsequent analysis. It shows that the mapping $\c S$ is Lipschitz continuous and that the push-forward mapping 
\[
\nu: \c P_2(\c D(\c S)) \to \c P_2(D^0([0,T];\R^d\times\R^l)), \quad \nu \mapsto  \c S\#\nu
\] 
that maps distributions of parametrised paths into distributions of non-parametrised paths, is continuous. This will allow us to work with continuous reward functionals when analysing MFGs of parametrisations.

\begin{lemma}\label{lemma S continuous}
    The map $\c S$ introduced in \cref{definition unparametrisation map s} is Lipschitz continuous with Lipschitz constant 1 on its domain $\c D(\c S)$. In particular, its push-forward mapping $\nu \mapsto \c S\#\nu$ is continuous. 
\end{lemma}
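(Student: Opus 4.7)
The plan is to exploit the fact, stated immediately before the lemma, that any $(\bar x,\bar\xi,\bar r)\in\c D(\c S)$ is itself a $WM_1$-parametrisation of its image $\c S(\bar x,\bar\xi,\bar r)\in D^0([0,T];\R^d\times\R^l)$. Once this is recognised, the Lipschitz bound is essentially forced by the definition of $d_{WM_1}$ as an infimum over all admissible parametrisations.

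Concretely, I would first fix two elements $(\bar x,\bar\xi,\bar r),(\bar y,\bar\eta,\bar s)\in\c D(\c S)$ and note that they serve as $WM_1$-parametrisations of $\c S(\bar x,\bar\xi,\bar r)$ and $\c S(\bar y,\bar\eta,\bar s)$ respectively. Substituting these specific parametrisations into the defining infimum for $d_{WM_1}$ yields
\[
    d_{WM_1}\bigl(\c S(\bar x,\bar\xi,\bar r),\c S(\bar y,\bar\eta,\bar s)\bigr) \leq \norm{\bar x - \bar y}_\infty \lor \norm{\bar\xi - \bar\eta}_\infty \lor \norm{\bar r - \bar s}_\infty,
\]
which is precisely the Lipschitz bound with constant $1$ with respect to the natural uniform norm on $C([0,1];\R^d\times\R^l\times[0,T])\supseteq\c D(\c S)$. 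Since everything is just an inequality from instantiating the infimum, there is no real obstacle here; the content of the statement is purely the observation that $\c D(\c S)$ consists of admissible parametrisations.

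For the second claim, I would deduce the continuity of the push-forward map $\nu\mapsto\c S\#\nu$ from the Lipschitz property in the standard way. Given any coupling $\pi\in\c P_2(\c D(\c S)\times\c D(\c S))$ with marginals $\nu$ and $\mu$, the image measure $(\c S\otimes\c S)\#\pi$ is a coupling of $\c S\#\nu$ and $\c S\#\mu$, so
\[
    \c W_2^2(\c S\#\nu,\c S\#\mu) \leq \int d_{WM_1}^2\bigl(\c S(\omega_1),\c S(\omega_2)\bigr)\,d\pi(\omega_1,\omega_2) \leq \int \norm{\omega_1-\omega_2}_\infty^2\,d\pi(\omega_1,\omega_2).
\]
Taking the infimum over couplings on the right-hand side gives $\c W_2(\c S\#\nu,\c S\#\mu)\leq \c W_2(\nu,\mu)$, so the push-forward is itself Lipschitz with constant $1$ in the $2$-Wasserstein metric, and in particular continuous.

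The only subtlety worth being explicit about is that the $2$-Wasserstein distance on $\c P_2(D^0)$ is taken with respect to $d_{WM_1}$, so one should also remark that $\c S\#\nu\in\c P_2(D^0)$ whenever $\nu\in\c P_2(\c D(\c S))$, which again follows from the Lipschitz bound (evaluated against any fixed reference path) and the finiteness of the second moment of $\nu$. No genuinely hard step arises; the lemma is essentially a bookkeeping consequence of the parametrisation description of $d_{WM_1}$.
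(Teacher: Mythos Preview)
Your proposal is correct and follows essentially the same approach as the paper: both recognise that elements of $\c D(\c S)$ are themselves $WM_1$-parametrisations and then instantiate the infimum in the definition of $d_{WM_1}$ to obtain the Lipschitz bound. The paper stops after the pathwise Lipschitz estimate and leaves the push-forward continuity implicit, whereas you spell out the standard coupling argument for the Wasserstein bound; this is a welcome addition but not a different method.
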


\begin{proof}
This proof relies on the fact that for each $(\bar x,\bar\xi,\bar r)\in \c D(\c S)$, the path $(\bar x,\bar\xi,\bar r)$ is a $WM_1$-parametrisation of $\c S(\bar x,\bar \xi,\bar r) = (x,\xi)$. Let $(\bar x,\bar\xi,\bar r),(\bar y,\bar\zeta,\bar s)\in \c D(\c S)$ with
\[    
(x,\xi) \coloneqq \c S(\bar x,\bar\xi,\bar r) \quad \mbox{and} \quad (y,\zeta) \coloneqq \c S(\bar y,\bar\zeta,\bar s). 
\]    
Then we obtain from the definition of the $WM_1$-metric that
    \begin{align}
    d_{WM_1}((x,\xi),(y,\zeta))
    &= \inf_{\substack{(\hat X,\hat \xi,\hat r)\text{ $WM_1$-parametrisation of }(x,\xi),\\(\hat y,\hat\zeta,\hat s)\text{ $WM_1$-parametrisation of }(y,\zeta)}} \norm{(\hat x,\hat\xi)-(\hat y,\hat\zeta)}_{\infty} \lor \norm{\hat r - \hat s}_{\infty}\\
    &\leq \norm{(\bar x,\bar\xi) - (\bar y,\bar\zeta)}_{\infty} \lor \norm{\bar r - \bar s}_{\infty} \\
    & \leq \norm{(\bar x,\bar\xi,\bar r) - (\bar y,\bar\zeta,\bar s)}_{\infty}.
    \end{align}
This proves the desired Lipschitz continuity.
\end{proof}


\subsection{Parametrised dynamics, controls and rewards}

Following \autocite{denkert2023extended}, we define parametrisations of (weak) singular controls using parametrised SDEs. This involves transforming the potentially discontinuous state/control process $t \mapsto (X_t,\xi_t)$ into a continuous one by introducing a random time scale $\bar r$ and corresponding continuous processes $(\bar X,\bar \xi)$ on the canonical state space. This mimicks the approach of $WM_1$-parametrisations reviewed in the previous section. 

We expect time-changed processes to satisfy an adapted SDE on the new time scale $\bar r$ and hence start by introducing the notion of weak solutions to time-changed SDEs.

\begin{definition}\label{definition weak solution parametrisation}
    We consider the canonical space
    \[
    \bar\Omega \coloneqq C([0,1];\R^d\times\R^l\times [0,T])\times C([0,T];\R^m),
    \]
    equipped with the Borel $\sigma$-algebra, the canonical processes $(\bar X,\bar\xi,\bar r)$ on the time horizon $[0,1]$ and $\bar W$ on the original time horizon $[0,T]$. Undoing the time-change of $(\bar X,\bar\xi,\bar r)$, we consider the filtration $\bar{\b F}$ on the original time horizon defined by
    \[
    \bar{\c F}_{t} \coloneqq \c F^{\bar W}_{t} \lor \c F^{\bar X,\bar\xi,\bar r}_{r_{t}},\quad t\in [0,T].
    \]
    Given a measure flow $\tilde\mu \in \c P_2(D^0)$, we call a probability measure $\bar\P\in\c P_2(\bar\Omega)$ a weak solution to the time changed SDE (under the time change $\bar r$)
    \begin{align}\label{eq weak solution parametrisation SDE}
        d\bar X_u = b(\bar r_u,\tilde\mu_{\bar r_u},\bar X_u,\bar\xi_u) d\bar r_u + \sigma(\bar r_u,\tilde\mu_{\bar r_u},\bar X_u,\bar\xi_u) d\bar W_{\bar r_u} + \gamma(\bar r_u,\bar X_u,\bar\xi_u) d\bar\xi_u,\quad \bar X_0 = x_{0-},
    \end{align}
    if and only if under $\bar\P$ the following holds:
    \begin{enumerate}[label=(\roman*)]
        \item $\bar r_0 = 0$ and $\bar r_1 = T$, $\bar\P$-a.s.,
        \item $\bar r$ is non-decreasing, $\bar\P$-a.s.,
        \item $\bar W$ is an $\bar{\b F}$-Brownian motion,
        \item $\bar X$ satisfies the dynamics \eqref{eq weak solution parametrisation SDE} driven by the control $\bar\xi$ and the Brownian motion $\bar W$ under the time change $\bar r$.
    \end{enumerate}
\end{definition}

\subsubsection{Controls in parametrisations}

Having introduced weak solutions to parametrised SDEs we are now going to introduce a notion of weak controls in parametrisations. A weak control in parametrisations is a law on the extended path space that corresponds to a weak solution of the parametrised SDE. 

\begin{definition}\label{definition parametrisation}
    Given a measure flow $\tilde\mu\in\c P_2(D^0)$, we define the set of parametrisations $\bar{\c A}(\tilde\mu)$ as the set of all probability measures $\nu\in\c P_2(C([0,1];\R^d\times\R^l\times [0,T])$ such that the following holds: 
    \begin{enumerate}[label=(\roman*)]
        \item $\bar\xi$ is non-decreasing $\nu$-a.s.,
        \item $\bar\xi_0 = 0$, $\nu$-a.s.,
        \item $\nu$ is the marginal law of a weak solution to the time changed SDE \eqref{eq weak solution parametrisation SDE} with the measure flow $\tilde\mu$.
    \end{enumerate}
    With every parametrisation $\nu\in\bar{\c A}(\tilde\mu)$ we associate the unparametrised flow 
    \[
        \mu\coloneqq \c S\#\nu 
\]    
    and call $\nu$ a parametrisation of $\mu$.
    We further say that a parametrisation $\nu$ is Lipschitz continuous with Lipschitz constant $L$ if the processes $\bar\xi$ and $\bar r$ are $\nu$-a.s.\@ Lipschitz continuous with constant $L$.
\end{definition}

In general we cannot expect the unparametrised flow $\mu = \c S\#\nu$ associated with a parametrisation $\nu$ to be an admissible control in the sense of the preceding section. However, the following lemma shows that this is indeed the case whenever \cref{assumptions state dynamics,assumptions gamma path independent}  are satisfied.

\begin{lemma}\label{lemma reducing parametrisations to controls}
Let \cref{assumptions state dynamics,assumptions gamma path independent} hold. For any measure flow $\tilde\mu\in\c P_2(D^0)$, if $\nu\in\bar{\c A}(\tilde\mu)$, then the corresponding unparametrised measure flow is an admissible control, that is 
\[
    \mu\coloneqq \c S\#\nu \in \c A(\tilde\mu).
\]
\end{lemma}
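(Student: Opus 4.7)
The plan is to lift a given $\nu \in \bar{\c A}(\tilde\mu)$, with underlying weak solution $\bar\P \in \c P_2(\bar\Omega)$ to the time-changed SDE \eqref{eq weak solution parametrisation SDE}, to a weak solution $\P \in \c P_2(\tilde\Omega)$ of the Marcus-type SDE \eqref{eq singular control state dynamics} whose $(X,\xi)$-marginal equals $\c S\#\nu = \mu$. Concretely I would define on $\bar\Omega$
\[
X_t \coloneqq \bar X_{r_t}, \qquad \xi_t \coloneqq \bar\xi_{r_t}, \qquad W_t \coloneqq \bar W_t, \qquad t\in[0-,T],
\]
where $r$ is the generalised inverse of $\bar r$ as introduced before \cref{definition unparametrisation map s} (with $X_{0-}=x_{0-}$, $\xi_{0-}=0$, and both extended constantly to $[-\varepsilon,0)$), and let $\P$ be the push-forward of $\bar\P$ under $(X,\xi,W)$. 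By definition of $\c S$ the $(X,\xi)$-marginal of $\P$ is $\mu$, and the càdlàg property, monotonicity of $\xi$, as well as $\xi_{0-}=0$ are immediate from the corresponding properties of $\bar\xi$ and $r$. Finite second moment transfers from $\nu$ using $\sup_t |X_t| \le \sup_u |\bar X_u|$ and $\sup_t |\xi_t| \le \sup_u |\bar\xi_u|$, together with the standard $L^2$-estimates for \eqref{eq weak solution parametrisation SDE} which are available under \cref{assumption b sigma continuous,assumptions gamma bounded continuous}.

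Next I would verify that $W$ is a $\tilde{\b F}$-Brownian motion. Since $\bar W$ is a $\bar{\b F}$-Brownian motion on the original time horizon and the canonical filtration $\tilde{\b F} = \b F^{X,\xi,W}$ is contained in $\bar{\b F}$ by the very definition of $\bar{\c F}_t = \c F^{\bar W}_t \lor \c F^{\bar X,\bar\xi,\bar r}_{r_t}$ in \cref{definition weak solution parametrisation}, the Brownian motion property is inherited.

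The crux is the Marcus-type dynamics. I would partition $[0,1]$ into the set $E_{\mathrm{cont}}$ on which $\bar r$ is (locally) strictly increasing and the complement $E_{\mathrm{jump}} = \bigcup_{t \in J}[r_{t-},r_t]$, where $J$ is the at most countable set of jumps of $r$ (equivalently, the jump set of $(X,\xi)$). On $E_{\mathrm{cont}}$ the change of time $u \mapsto \bar r_u$ transforms the parametrised SDE \eqref{eq weak solution parametrisation SDE} into the standard SDE
\[
dX_t = b(t,\tilde\mu_t,X_{t-},\xi_{t-})\,dt + \sigma(t,\tilde\mu_t,X_{t-},\xi_{t-})\,dW_t + \gamma(t,X_{t-},\xi_{t-})\,d\xi^c_t,
\]
where the identification of the martingale part relies on the classical time-change of stochastic integrals. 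On each sub-interval $[r_{t-},r_t] \subseteq E_{\mathrm{jump}}$ the time change $\bar r$ is constant equal to $t$ so that the $d\bar r_u$ and $d\bar W_{\bar r_u}$ contributions both vanish, and $\bar X$ is forced to satisfy
\[
d\bar X_u = \gamma(t,\bar X_u,\bar\xi_u)\,d\bar\xi_u, \qquad \bar X_{r_{t-}} = X_{t-},
\]
with $\bar\xi$ any continuous non-decreasing path from $\xi_{t-}$ to $\xi_t$. Invoking \cref{assumptions gamma path independent} gives $X_t = \bar X_{r_t} = \psi(t,X_{t-},\xi_{t-},\xi_t)$, which is exactly the jump part of $\gamma(\cdot) \diamond d\xi$ in \eqref{eq singular control state dynamics}. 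Combining the two regimes verifies the Marcus-type SDE, and therefore $\mu \in \c A(\tilde\mu)$.

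I expect the main obstacle to be the careful treatment of the Brownian stochastic integral under a time change which is only non-decreasing, not strictly increasing. This is resolved by noting that the quadratic variation of $u \mapsto \bar W_{\bar r_u}$ equals $\bar r_u$ and therefore accumulates no mass on the flats of $\bar r$; consequently the classical time-change formula $\int_0^{\bar r_u}\sigma(s,\ldots)\,dW_s = \int_0^u \sigma(\bar r_v,\ldots)\,d\bar W_{\bar r_v}$ extends to the present setting, and the identification of the martingale part in the unparametrised SDE goes through unchanged. The remaining (pathwise) verifications are routine once the jump and continuous regimes have been separated as above, and parallel the argument used in \autocite{denkert2023extended} for the pure control case.
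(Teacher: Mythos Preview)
Your proposal is correct and follows essentially the same approach as the paper: both undo the time change via $\c S$, split the parametrised dynamics into the continuous regime (where the time-change reduces \eqref{eq weak solution parametrisation SDE} to the standard SDE) and the jump intervals $[r_{t-},r_t]$ (where \cref{assumptions gamma path independent} identifies $\bar X_{r_t}$ with the Marcus jump $\psi$), and then conclude $\mu\in\c A(\tilde\mu)$. You are somewhat more explicit about the filtration inclusion and the time-change of the Brownian stochastic integral than the paper, but the core argument is identical.
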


\begin{proof}
    Given $\nu\in\bar{\c A}(\tilde\mu)$, let $\bar\P\in\c P_2(\bar\Omega)$ be a corresponding weak solution to the time changed SDE \eqref{eq weak solution parametrisation SDE}. Then the process $(X,\xi) \coloneqq \c S(\bar X,\bar\xi,\bar r) = (\bar X\circ r,\bar\xi\circ r)$ satisfies for all $t\in [0,T]$ that
    \begin{align}
        X_t &= x_{0-} + \int_0^{r_t} b(\bar r_u,\tilde\mu_{\bar r_u},\bar X_u,\bar\xi_u) d\bar r_u + \int_0^{r_t}\sigma(\bar r_u,\tilde\mu_{\bar r_u},\bar X_u,\bar\xi_u) d\bar W_{\bar r_u} + \int_0^{r_t}\gamma(\bar r_u,\bar X_u,\bar\xi_u) d\bar\xi_u\\
        &= x_{0-} + \int_0^t b(s,\tilde\mu_s,X_s,\xi_s)ds + \int_0^t \sigma(s,\tilde\mu_s,X_s,\xi_s)d\bar W_s+ \int_0^t \gamma(s,X_s,\xi_s) d\xi^c_s\\
        &\quad + \sum_{s\in [0,t], \xi_{s-}\not=\xi_s} \int_{r_{s-}}^{r_s} \gamma(\bar r_u,\bar X_u,\bar\xi_u) d\bar\xi_u.
    \end{align}
    
    On each such jump interval $[r_{s-},r_s]$, the parametrised process $\bar\xi$ is a continuous, monotone interpolation between $\xi_{s-}$ and $\xi_s$ and the state process $\bar X$ satisfies the Marcus-type jump dynamics \eqref{eq marcus integration jump} with $\zeta \coloneqq \bar\xi$ on $[r_{s-},r_s]$. As a result, and recalling that $(X,\xi)$ is càdlàg, it follows from \cref{assumptions gamma path independent} that
    \[
    dX_t = b(t,\tilde\mu_t,X_{t-},\xi_{t-})dt + \sigma(t,\tilde\mu_t,X_{t-},\xi_{t-})d\bar W_t + \gamma(t,X_{t-},\xi_{t-}) \diamond d\xi_t,\quad X_{0-} = x_{0-},
    \]
    and thus $\mu = \bar\P_{(X,\xi)} \in \c A(\tilde\mu)$.
\end{proof}

The preceding result showed that under \cref{assumptions state dynamics,assumptions gamma path independent}  any parametrisation yields an admissible control. The next lemma, whose proof is given in \cref{appendix proof lemma wm1 convergence parametrisation convergence}, shows that every limit of continuous controls admits a parametrisation that can be obtained as the limit of parametrisations of the approximating continuous controls. Together with \cref{lemma reducing parametrisations to controls} this shows that under \cref{assumptions state dynamics,assumptions gamma path independent} every limit of continuous controls is an admissible (singular) control i.e.~that $$\overline{\c C(\tilde\mu)}\subseteq \c A(\tilde\mu).$$ 

In preparation for the fixed point argument required to solve our MFGs we state the lemma for general sequences of ``exogenous'' measure flows $\tilde\mu^n\to\tilde\mu$.

\begin{lemma}\label{lemma wm1 convergence parametrisation convergence}
    Let \cref{assumptions state dynamics} hold and let $\tilde\mu^n\to\tilde\mu$ in $\c P_2(D^0)$ be a convergent sequence of measure flows. Let $(\mu^n)_n\in \c P_2(C)$ be a sequence of \emph{continuous} controls with $\mu^n\in\c C(\tilde\mu^n)\subseteq\c A(\tilde\mu^n)$ and 
    \[
        \mu^n\to\mu \quad \mbox{in} \quad \c P_2(D^0). 
\]        
Then there exists a parametrisation $\nu \in\bar{\c A}(\tilde\mu)$ of $\mu$ and a sequence of parametrisations $(\nu^n)_n$ such that each $\nu^n \in\bar{\c A}(\tilde\mu^n)$ is a parametrisation of $\mu^n$ and, along a subsequence, 
\[
    \nu^n\to\nu \quad \mbox{in} \quad \c P_2(C).
\]
\end{lemma}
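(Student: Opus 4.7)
The plan is to build parametrisations of the $\mu^n$ via a canonical arc-length time change, establish tightness and a Skorokhod representation for the resulting laws, and then pass to the limit in the time-changed SDE to identify the limiting parametrisation of $\mu$.

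First, I would fix, for each $n$, a weak solution $\tilde\P^n\in\c P_2(\tilde\Omega)$ realising $\mu^n\in\c C(\tilde\mu^n)$, so that the canonical process $(X^n,\xi^n,W^n)$ is a solution of the SDE \eqref{eq continuous singular control state dynamics} driven by $\tilde\mu^n$. For each sample path, define the strictly increasing, continuous time change
\[
A^n_t \coloneqq t + \sum_{i=1}^l \xi^{n,i}_t, \qquad \bar r^n_u \coloneqq (A^n)^{-1}(u\,A^n_T), \quad u\in[0,1],
\]
and set $\bar X^n_u \coloneqq X^n_{\bar r^n_u}$, $\bar\xi^n_u \coloneqq \xi^n_{\bar r^n_u}$, $\bar W^n \coloneqq W^n$. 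The identity $\bar r^n_u + \sum_i \bar\xi^{n,i}_u = u\,A^n_T$ together with monotonicity shows that $(\bar\xi^n,\bar r^n)$ is jointly $A^n_T$-Lipschitz. Standard time-change calculus for stochastic integrals shows that $(\bar X^n,\bar\xi^n,\bar r^n,\bar W^n)$ satisfies the time-changed SDE \eqref{eq weak solution parametrisation SDE}, and hence the marginal law $\nu^n$ of $(\bar X^n,\bar\xi^n,\bar r^n)$ lies in $\bar{\c A}(\tilde\mu^n)$; by construction, $\c S\#\nu^n = \mu^n$.

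Next, I would establish tightness and extract a convergent subsequence. The uniform $L^2$-bound on $\xi^n_T$ coming from $\mu^n\to\mu$ in $\c P_2(D^0)$ controls $A^n_T$ uniformly in $n$; combined with the $A^n_T$-Lipschitz bounds on $(\bar\xi^n,\bar r^n)$ and standard modulus-of-continuity estimates for $\bar X^n$ derived from the SDE, \cref{assumptions state dynamics} and It\^o's isometry, this gives $C$-tightness in $\c P_2$ of the joint laws $\bar\P^n$ of $(\bar X^n,\bar\xi^n,\bar r^n,\bar W^n)$. Along a subsequence $\bar\P^n$ converges, and by Skorokhod's representation theorem we may assume $(\bar X^n,\bar\xi^n,\bar r^n,\bar W^n)\to(\bar X,\bar\xi,\bar r,\bar W)$ uniformly almost surely on a common probability space. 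Letting $\nu$ be the law of $(\bar X,\bar\xi,\bar r)$, the pathwise Lipschitz continuity of $\c S$ from \cref{lemma S continuous}, together with the hypothesis $\mu^n\to\mu$, forces $\c S\#\nu = \mu$.

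The principal technical obstacle is to verify $\nu\in\bar{\c A}(\tilde\mu)$, i.e.\ that $(\bar X,\bar\xi,\bar r,\bar W)$ is a weak solution of the time-changed SDE \eqref{eq weak solution parametrisation SDE} driven by $\tilde\mu$. Monotonicity of $\bar\xi,\bar r$, the boundary conditions $\bar r_0=0$, $\bar r_1=T$, $\bar\xi_0=0$ and path continuity are all preserved under uniform limits. The drift and jump integrals pass to the limit by uniform path convergence, continuity of $b$ and $\gamma$ (\cref{assumption b sigma continuous,assumptions gamma bounded continuous}) and the convergence $\tilde\mu^n\to\tilde\mu$. The diffusion term is the delicate part, because $\bar W^n$ is evaluated at the random times $\bar r^n_u$. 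I would first identify $\bar W$ as a Brownian motion with respect to the limiting filtration $\bar{\b F}$ via L\'evy's characterisation — both the continuous martingale property of $\bar W^n$ and its quadratic variation $[\bar W^n]_t = tI$ are preserved under weak limits — and then apply standard stability results for time-changed It\^o integrals to conclude convergence of $\int_0^{\cdot}\sigma\,d\bar W^n_{\bar r^n}$, completing the identification $\nu\in\bar{\c A}(\tilde\mu)$.
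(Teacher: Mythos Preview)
Your overall plan---build explicit parametrisations via a path-dependent time change, prove tightness, extract a limit via Skorokhod, and identify the limit as a parametrisation of $\mu$---matches the paper's. Two points deserve attention.

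First, the paper's treatment of the diffusion term differs substantively from yours and bypasses the difficulty you flag. Rather than passing to the limit directly in the time-changed stochastic integral $\int_0^u \sigma\, d\bar W^n_{\bar r^n}$, the paper applies Skorokhod to the \emph{un}parametrised processes $(X^n,\xi^n,W^n)$ first, decomposes $X^n = x_{0-} + L^n + \Gamma^n$ with $L^n$ the drift--plus--diffusion part and $\Gamma^n$ the $\gamma$-integral, and proves $L^n \to L$ \emph{in the original time scale $[0,T]$}, where the integrator is just $W^n$ and standard stability results apply. Only afterwards is the time change introduced, and then $\bar L^n = L^n \circ \bar r^n \to L \circ \bar r \eqqcolon \bar L$ follows by mere composition of uniformly convergent continuous functions---no stability of time-changed stochastic integrals is needed at all. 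Your direct route can likely be made rigorous (the integrators $\bar W^n_{\bar r^n}$ have quadratic variation $\bar r^n \le T$, so a UT-type condition holds), but it is heavier machinery than the problem requires, and your phrase ``standard stability results for time-changed It\^o integrals'' hides real work.

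Second, there is a genuine gap when you invoke \cref{lemma S continuous} to conclude $\c S\#\nu = \mu$. That lemma asserts Lipschitz continuity of $\c S$ only on its domain $\c D(\c S)$, which demands that $\bar X$ be componentwise monotone on every interval where $\bar r$ is constant. For the approximants this is vacuous since your $\bar r^n$ is strictly increasing, but in the limit $\bar r$ will have flat pieces precisely at the jump times of $\xi$, and you have not verified the monotonicity there. The paper checks it explicitly: on such an interval $\bar L = L\circ\bar r$ is constant, so $\bar X$ moves only through the $\gamma$-integral $\bar\Gamma$, and \cref{assumptions gamma montone} then forces $\bar\Gamma$---hence $\bar X$---to be monotone. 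Without this step $\c S$ is not known to be continuous at the limit point, and the identification $\c S\#\nu=\mu$ does not follow.

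A minor technical point: the paper's time change uses $t+\arctan(\Var(\xi^n,[0,t]))$ rather than your $t+\sum_i \xi^{n,i}_t$. The $\arctan$ produces a \emph{deterministic} normalisation $T+\tfrac{\pi}{2}$, so equicontinuity of $(\bar\xi^n,\bar r^n)$ comes directly from local uniform continuity of $\tan$ on compacts. Your normalisation $A^n_T$ is random and only $L^2$-bounded; tightness still follows via Chebyshev plus Arzel\`a--Ascoli on large-probability sets, but the argument is less direct than ``controls $A^n_T$ uniformly'' suggests.
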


The next result allows us to approximate admissible controls by Lipschitz continuous ones. The proof is given in \cref{appendix proof lemma parametrisation construct lipschitz approximations}. Specifically, it shows that any measure flow $\mu\in \c P_2(D^0)$ that admits a parametrisation $\nu$ can be approximated by a sequence $(\mu^n)_n$ of \emph{Lipschitz continuous} controls with corresponding parametrisations $(\nu^n)_n$ in such a way that the sequence $(\mu^n,\nu^n)_n$ converges to $(\mu,\nu)$.
This shows that 
\[
    \c A(\tilde\mu) \subseteq \overline{\c C(\tilde\mu)}.
\]

\begin{lemma}\label{lemma parametrisation construct lipschitz approximations}
    Let \cref{assumptions state dynamics} hold and let $\tilde\mu\in\c P_2(D^0)$ be a given measure flow. Let $\nu\in\bar{\c A}(\tilde\mu)$ be a parametrisation of $\mu\in \c P_2(D^0)$. Then for every sequence of approximating measure flows 
    \[
    \tilde\mu^n\to \tilde\mu \quad \mbox{in} \quad \c P_2(D^0), 
    \]    
    there exists an increasing sequence $(k_n)_n \subseteq \N$ with $k_n\to\infty$ and a sequence of $k_n$-Lipschitz admissible controls $\mu^n\in \c A(\tilde\mu^{k_n})$ with $k_n$-Lipschitz parametrisations $\nu^n\in\bar{\c A}(\tilde\mu^{k_n})$ such that
    \[
    (\mu^n,\nu^n) \to (\mu,\nu)\quad \text{in} \quad \c P_2(D^0)\times \c P_2(C).
    \]    
    Furthermore, if
    \[
        (\mu_\xi,\nu_{\bar\xi}) \in \c P_q(D^0([0,T];\R^l))\times\c P_q(C([0,1];\R^l)) 
    \]
    for some $q > 2$, then we can choose the approximating sequence $(\mu^n,\nu^n)_n$ to satisfy
    \[
    (\mu^n_\xi,\nu^n_{\bar\xi}) \to (\mu_\xi,\nu_{\bar\xi})\quad\text{in} \quad \c P_q(D^0([0,T];\R^l))\times\c P_q(C([0,1];\R^l)).
    \]
\end{lemma}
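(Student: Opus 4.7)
The plan is to proceed in three stages: (i) construct $k$-Lipschitz approximations $\nu^{(k)} \in \bar{\c A}(\tilde\mu)$ of $\nu$ via a pathwise time-change together with a truncation of the control at total-variation level $k$; (ii) for each fixed $k$, perturb these to $k$-Lipschitz parametrisations $\nu^{(k,n)} \in \bar{\c A}(\tilde\mu^n)$ using stability of time-changed SDEs under Wasserstein perturbations of the measure flow; and (iii) extract a diagonal subsequence.

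The heart of the argument is stage (i). Fix a weak solution $\bar\P$ associated with $\nu$. Introduce the pathwise clock $V_u \coloneqq \sum_{i=1}^l \bar\xi^i_u + \bar r_u$, which is continuous and non-decreasing, and the stopping time $\tau_k \coloneqq \inf\{u\in[0,1] : V_u > k\} \wedge 1$. Define the truncated control $\hat\xi_u \coloneqq \bar\xi_{u \wedge \tau_k}$ and let $\hat X$ be the weak solution of \eqref{eq weak solution parametrisation SDE} driven by $(\hat\xi, \bar r, \bar W)$, which exists by \crefcompress{assumption b sigma continuous,assumptions gamma bounded continuous} and agrees with $\bar X$ up to $\tau_k$, then evolves purely by drift and diffusion. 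Setting $W_u \coloneqq \sum_i \hat\xi^i_u + \bar r_u$ gives $W_1 \leq k + T$. Now apply a time change: let $\Phi^k(u) \coloneqq ku + W_u$ with inverse $\psi^k$, and rescale $\tilde\psi^k(v) \coloneqq \psi^k((k+W_1)v)$ so that $\tilde\psi^k:[0,1]\to[0,1]$ is an increasing homeomorphism. Set $(\bar X^{(k)}, \bar\xi^{(k)}, \bar r^{(k)})_v \coloneqq (\hat X, \hat\xi, \bar r)_{\tilde\psi^k(v)}$. A chain-rule computation combined with the standard time-change rule for stochastic integrals shows that this triple satisfies \eqref{eq weak solution parametrisation SDE} under the same $\bar W$, and since the monotonicity of $\tilde\psi^k$ gives $\c F^{\bar X^{(k)},\bar\xi^{(k)},\bar r^{(k)}}_{r^{(k)}_t} = \c F^{\hat X,\hat\xi,\bar r}_{r_t}$, the filtration $\bar{\b F}$ and the Brownian-motion property of $\bar W$ are preserved. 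The Lipschitz bound $|\bar\xi^{(k)}_{v'}-\bar\xi^{(k)}_v|_1 \leq W_{\tilde\psi^k(v')}-W_{\tilde\psi^k(v)} \leq \Phi^k(\tilde\psi^k(v'))-\Phi^k(\tilde\psi^k(v)) = (k+W_1)(v'-v) \leq (2k+T)(v'-v)$ is now deterministic, and similarly for $\bar r^{(k)}$; relabelling $2k+T \mapsto k$ we get the desired $k$-Lipschitz $\nu^{(k)}$. Convergence $\nu^{(k)}\to\nu$ in $\c P_2(C)$ follows because (a) $\tau_k\to 1$ a.s.\ as $k\to\infty$ (since $V_1<\infty$ a.s.), so $(\hat X,\hat\xi)\to(\bar X,\bar\xi)$ uniformly a.s.; (b) $\tilde\psi^k\to \mathrm{id}$ uniformly on $\{V_1\leq k\}$ since $|\tilde\psi^k(v)-v|\leq W_1/k$ there; (c) uniform integrability of sup-norms (via $V_1\in L^2$) upgrades a.s.\ convergence to $\c P_2$-convergence by dominated convergence. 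The $\c P_q$-refinement follows analogously, noting that reparametrisation preserves path-suprema and truncation is dominated by $\sup_u|\bar\xi_u|=|\bar\xi_1|$.

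For stage (ii), fix $k$ and hold $(\bar\xi^{(k)},\bar r^{(k)},\bar W)$ pathwise fixed on the canonical space; let $\bar X^{(k,n)}$ be the unique strong solution of \eqref{eq weak solution parametrisation SDE} with $\tilde\mu$ replaced by $\tilde\mu^n$. Existence and pathwise uniqueness are standard under \crefcompress{assumption b sigma continuous,assumptions gamma bounded continuous}, because the $k$-Lipschitz integrators $d\bar r^{(k)}_u,d\bar\xi^{(k)}_u$ absorb into globally Lipschitz coefficients on $[0,1]$. Then $\nu^{(k,n)}\coloneqq \c L(\bar X^{(k,n)},\bar\xi^{(k)},\bar r^{(k)})\in \bar{\c A}(\tilde\mu^n)$ is $k$-Lipschitz, and a routine Grönwall estimate based on the Lipschitz dependence of $b,\sigma$ on the measure argument, combined with $\tilde\mu^n_{\bar r^{(k)}_u}\to \tilde\mu_{\bar r^{(k)}_u}$ in $\c W_2$, gives $\nu^{(k,n)}\to\nu^{(k)}$ in $\c P_2(C)$ as $n\to\infty$. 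For stage (iii), for each $n$ pick $N(n)\geq n$ with $d_{\c P_2}(\nu^{(n,N(n))},\nu^{(n)})<1/n$ and set $k_n\coloneqq N(n)$, $\nu^n\coloneqq \nu^{(n,N(n))}$; then $\nu^n$ is $n$-Lipschitz hence $k_n$-Lipschitz, lies in $\bar{\c A}(\tilde\mu^{k_n})$, and converges to $\nu$. By \cref{lemma reducing parametrisations to controls,lemma S continuous}, $\mu^n\coloneqq \c S\#\nu^n\in\c A(\tilde\mu^{k_n})$ and $\mu^n\to\mu$ in $\c P_2(D^0)$. The principal obstacle is stage (i): combining truncation at level $\tau_k$ (which gives a deterministic Lipschitz bound while losing only vanishing mass) with the pathwise time change (which must preserve the weak-solution structure and the Brownian-motion property w.r.t.\ the correctly time-changed filtration, relying on \cref{assumptions gamma path independent} to make the jump integration insensitive to interpolation) requires the careful bookkeeping of measurability and moments described above; stage (ii) is by comparison routine.
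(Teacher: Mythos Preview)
Your three-stage plan mirrors the paper's proof (which also proceeds via truncation, time-change, and a measure-flow perturbation), but there is a genuine gap: stage~(i) only produces Lipschitz \emph{parametrisations}, not Lipschitz \emph{controls}. The statement requires $\mu^n\in\c A(\tilde\mu^{k_n})$ to be $k_n$-Lipschitz, meaning the unparametrised process $\xi = \bar\xi^{(k)}\circ r^{(k)}$ must be $k_n$-Lipschitz in real time $t\in[0,T]$. Your time change $\Phi^k(u)=ku+W_u$ makes $\bar\xi^{(k)}$ and $\bar r^{(k)}$ Lipschitz in $v\in[0,1]$, but it does nothing to the intervals on which the original $\bar r$ is constant: since $\bar r^{(k)}_v = \bar r_{\tilde\psi^k(v)}$ and $\tilde\psi^k$ is a homeomorphism, $\bar r^{(k)}$ is still flat wherever $\bar r$ was. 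On any such interval $\bar\xi^{(k)}$ may strictly increase, so the inverse $r^{(k)}$ jumps and $\xi^{(k)}_t=\bar\xi^{(k)}_{r^{(k)}_t}$ jumps with it. The paper closes this gap with a separate step (its Lemma~\ref{lemma a1.5}): starting from a Lipschitz parametrisation, one perturbs the time scale to $\bar r^\delta_u \coloneqq (\bar r_u + \delta T u)/(1+\delta)$, which forces $\bar r^\delta$ to be strictly increasing with $(1+\delta)/(\delta T)$-Lipschitz inverse, and hence makes $\xi^\delta \coloneqq \bar\xi_{r^\delta}$ genuinely Lipschitz in $t$. This perturbation is not innocuous: to keep the SDE structure one must also replace $\bar W$ by a new Brownian motion $\bar W^\delta$ built from an auxiliary Brownian motion on an enlarged space (so that $d\bar W^\delta_{\bar r^\delta_u}=\sqrt{\dot{\bar r}^\delta_u}\,d\bar B_u$), and then run a Gronwall argument to show $\bar X^\delta\to\bar X$.

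A second, related issue: in stage~(iii) you invoke \cref{lemma reducing parametrisations to controls} to get $\mu^n\in\c A(\tilde\mu^{k_n})$, but that lemma needs \cref{assumptions gamma path independent}, which is \emph{not} among the hypotheses here. This is exactly why the paper works harder to make $\xi$ continuous: once the control is Lipschitz the Marcus-type SDE reduces to a standard SDE and admissibility is immediate without \cref{assumptions gamma path independent}. So both defects trace back to the same missing ingredient: the $\bar r\mapsto\bar r^\delta$ perturbation.
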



\subsubsection{Rewards in parametrisations}

Having introduced the concept of controls in parametrisations we now define the reward of a parametrisation as 
\[
J(\tilde\mu,\nu) = \E^\nu\Big[g(\tilde\mu_T,\bar X_1,\bar\xi_1) + \int_0^1 f(\bar r_u,\tilde\mu_{\bar r_u},\bar X_u,\bar\xi_u) d\bar r_u - \int_0^1 c(\bar r_u,\bar X_u,\bar\xi_u) d\bar\xi_u \Big].
\]

In view of \cref{assumption g growth} there exists $p>2$ such that 
\[
    J(\tilde\mu,\nu) = -\infty \quad \mbox{if} \quad \nu_{\bar\xi}\not\in\c P_p(C([0,1];\R^l)).
\]    
In particular, parametrisation $\nu$ for which $\nu_{\bar\xi}\not\in\c P_p(C([0,1];\R^l))$ are not relevant for our MFG and can be disregarded when analysing continuity properties of the reward function. 

It follows from the preceding lemma that any ``relevant'' parametrisation can be approximated by parametrisations associated with Lipschitz continuous admissible controls such that 
\[
    \nu^n_{\bar\xi}\to\nu_{\bar\xi} \quad \text{in} \quad \c P_p(C([0,1];\R^l)). 
\]    
The next lemma shows that the reward function in parametrisations is continuous (in a stronger topology) on the subset of ``relevant'' parametrisations of the graph
\[
\Gamma_{\bar{\c A}}\coloneqq \{(\tilde\mu,\nu) \in \c P_2(D^0)\times\c P_2(C) \mid \nu \in \bar{\c A}(\tilde\mu) \}
\]
of the set-valued mapping 
\[
\bar{\c A}:\c P_2(D^0([0,T];\R^d\times\R^l)) \rightrightarrows \c P_2(C([0,1];\R^d\times\R^l\times [0,T])).
\]

\begin{lemma}\label{lemma J parametrisations continuous}
If \cref{assumptions state dynamics,assumptions reward and cost functions} are satisfied, then the following holds.
\begin{enumerate}[wide,label=(\roman*)]
    \item 
    The reward functional $J:\Gamma_{\bar{\c A}}\to \R$ is upper semi-continuous.
    \item
    If $(\tilde\mu^n,\nu^n) \to (\tilde\mu,\nu)$ in $\Gamma_{\bar{\c A}}$ such that $\nu^n_{\bar\xi} \to \nu_{\bar\xi}$ in $\c P_p(C([0,1];\R^l))$, then $J(\tilde\mu^n,\nu^n) \to J(\tilde\mu,\nu)$.
\end{enumerate}
\end{lemma}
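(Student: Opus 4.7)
The strategy is to couple the convergent sequences via Skorokhod's representation theorem and then pass to the limit inside the expectation, relying on pointwise continuity of the integrands, a Helly--Bray argument for the Stieltjes integral against $d\bar\xi$, and uniform integrability inherited from the $\c P_2$ (respectively $\c P_p$) moment convergence. First I invoke Skorokhod's representation for $\nu^n \to \nu$ in $\c P_2(C)$ to build on a common probability space continuous processes $(\bar X^n,\bar\xi^n,\bar r^n)$ of law $\nu^n$ converging uniformly almost surely to $(\bar X,\bar\xi,\bar r)$ of law $\nu$. The measure flows $\tilde\mu^n\to\tilde\mu$ are deterministic; because $WM_1$-parametrisations pin the endpoints, evaluation at $t=T$ is continuous, so $\tilde\mu^n_T\to\tilde\mu_T$ in $\c P_2(\R^{d+l})$, while at intermediate times $\tilde\mu^n_t \to \tilde\mu_t$ for every $t$ outside an at most countable set $J_{\tilde\mu}$ of discontinuities of the marginal map $s\mapsto\tilde\mu_s$.

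Next I establish pointwise a.s.\ convergence of each summand. Continuity of $g$ from \cref{assumption f g continuous} handles the terminal term. For the running integral, the key observation is that since $\bar r$ is continuous and non-decreasing with $\bar r_0=0$ and $\bar r_1=T$, its Stieltjes measure $d\bar r$ on $[0,1]$ pushes forward through $\bar r$ to Lebesgue measure on $[0,T]$; hence $\{u:\bar r_u\in J_{\tilde\mu}\}$ has $d\bar r$-measure zero. Combined with uniform convergence of $(\bar X^n,\bar\xi^n,\bar r^n)$ and continuity of $f$, the integrand $f(\bar r^n_u,\tilde\mu^n_{\bar r^n_u},\bar X^n_u,\bar\xi^n_u)$ converges $d\bar r$-a.e., and bounded convergence yields a.s.\ convergence of the running integral. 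For the cost term, local uniform continuity of $c$ from \cref{assumption c continuous} yields uniform convergence of $c(\bar r^n_\cdot,\bar X^n_\cdot,\bar\xi^n_\cdot)$; splitting $\int c^n\, d\bar\xi^n - \int c\, d\bar\xi$ into $\int(c^n-c)\, d\bar\xi^n$ (which vanishes by uniform convergence and boundedness of $\bar\xi^n_1$) and $\int c\, d(\bar\xi^n-\bar\xi)$ (which vanishes by a Helly--Bray argument for non-decreasing $\bar\xi^n\to\bar\xi$ uniformly) delivers a.s.\ convergence of the cost integral.

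Finally I verify uniform integrability in order to pass from a.s.\ convergence to expectation. Convergence in $\c P_2(C)$ provides UI of $\sup_u|\bar X^n_u|^2$ and of $|\bar\xi^n_1|^2$; combined with the deterministic bound $\sup_n\sup_t \c W_2^2(\tilde\mu^n_t,\delta_0) < \infty$ inherited from $\c P_2(D^0)$ convergence of $\tilde\mu^n$, this controls the $f$-integral via \cref{assumption f quadratic growth} and the $c$-integral via the estimate $|\int c\, d\bar\xi| \leq C(1 + \sup_u|\bar X_u|^2 + |\bar\xi_1|^2)$ coming from \cref{assumption c linear growth}, so both terms converge in expectation. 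For $g$, the upper bound in \cref{assumption g growth} gives $g \leq C_3(1 + \c W_2^2(m,\delta_0) + |x|^2)$, which is UI, so reverse Fatou yields $\limsup_n \E[g^n] \leq \E[g]$ and establishes~(i). Under the stronger $\c P_p$ convergence of $\nu^n_{\bar\xi}$ assumed in~(ii), we additionally have UI of $|\bar\xi^n_1|^p$, so the lower bound in \cref{assumption g growth} becomes UI and Fatou yields the matching lower bound, giving full continuity. The main obstacle is the measure-theoretic step identifying the pushforward of $d\bar r$ under $\bar r$ with Lebesgue measure on $[0,T]$, which is what allows the countable set $J_{\tilde\mu}$ of marginal discontinuities of the exogenous measure flow to be ignored inside the running integral.
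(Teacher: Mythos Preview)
Your approach is essentially the same as the paper's: Skorokhod coupling, then term-by-term analysis with Fatou/Vitali for the terminal term, a Helly--Bray split for the cost integral, and uniform integrability from the $\c P_2$/$\c P_p$ convergence.

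The one place where your argument is looser than the paper's is the running integral. You correctly observe that $\bar r_\#(d\bar r)$ is Lebesgue on $[0,T]$ and deduce that the parametrised integrand converges $d\bar r$-a.e.; but the integral you need is $\int_0^1 f(\bar r^n_u,\tilde\mu^n_{\bar r^n_u},\bar X^n_u,\bar\xi^n_u)\,d\bar r^n_u$, against the \emph{varying} integrator $d\bar r^n$, so ``bounded convergence'' alone does not close the argument. The paper resolves this by carrying your pushforward observation to its conclusion: applying the change of variables (equivalently, composing with the unparametrisation map $\c S$ of \cref{lemma S continuous}) rewrites the running term as $\int_0^T f(t,\tilde\mu^n_t,X^n_t,\xi^n_t)\,dt$ with $(X^n,\xi^n)=\c S(\bar X^n,\bar\xi^n,\bar r^n)$, against the fixed Lebesgue measure, after which Vitali applies directly. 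Either complete the change of variables you set up, or repeat the Helly--Bray split you already used for the $c$-term.
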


\begin{proof}
Let $(\tilde\mu^n,\nu^n)_n\cup (\tilde\mu,\nu) \subseteq \Gamma_{\bar{\c A}}$ with 
\begin{equation}\label{mun-mu}
    (\tilde\mu^n,\nu^n)\to (\tilde\mu,\nu) \quad \mbox{in} \quad \c P_2(D^0)\times \c P_2(C). 
\end{equation}
Since the set of continuous functions $C([0,1];\R^d\times\R^l\times [0,T])$ is separable, we can use Skorokhod's representation theorem, see \autocites(Theorem 6.7){billingsley2013convergence}, to obtain processes $(\bar X^n,\bar \xi^n,\bar r^n)_n$ and $(\bar X,\bar\xi,\bar r)$ on a joint probability space $(\tilde\Omega,\tilde {\c F},\tilde\P)$ such that $\tilde\P_{(\bar X^n,\bar\xi^n,\bar r^n)} = \nu^n$, $\tilde\P_{(\bar X,\bar \xi,\bar r)} = \nu$ and 
\[
(\bar X^n,\bar\xi^n,\bar r^n) \to (\bar X,\bar\xi,\bar r)\text{ in }C([0,1];\R^d\times\R^l\times [0,T])\quad\tilde\P\text{-a.s.\@ and in }L^2,
\]
which in particular implies that the sequence
$
\big(\sup_{u\in [0,1]} |\bar X^n_u|^2 + \sup_{u\in [0,1]} |\bar \xi^n_u|^2\big)_n
$
is uniformly integrable.

We can estimate the differences in the rewards by
\begin{align}
&|J(\tilde\mu^n,\nu^n) - J(\tilde\mu,\nu)|\\
&\leq \E^{\tilde\P}\Big[\Big|\int_0^1 f(\bar r^n_u,\tilde\mu^n_{\bar r^n_u},\bar X^n_u,\bar\xi^n_u) d\bar r^n_u - \int_0^1 f(\bar r_u,\tilde\mu_{\bar r_u},\bar X_u,\bar\xi_u) d\bar r_u\Big|\\
&\qquad + |g(\tilde\mu^n_T,\bar X^n_1,\bar\xi^n_1) - g(\tilde\mu_T,\bar X_1,\bar\xi_1)| + \Big|\int_0^1 c(\bar r^n_u,\bar X^n_u,\bar\xi^n_u) d\bar\xi^n_u - \int_0^1 c(\bar r_u,\bar X_u,\bar\xi_u)d\bar\xi_u\Big| \Big].
\end{align}
Since by \cref{assumption f g continuous} the terminal payoff function $g$ is continuous,
\[
g(\tilde\mu_T^n,\bar X^n_1,\bar\xi^n_1)\to g(\tilde\mu_T,\bar X_1,\bar\xi_1),\qquad\tilde\P\text{-a.s.}
\]
Together with \cref{assumption g growth} and Fatou's lemma this implies that
\[
\limsup_{n\to\infty} \E^{\tilde\P}[g(\tilde\mu^n_T,\bar X^n_1,\bar\xi^n_1)] \leq \E^{\tilde\P}[g(\tilde\mu_T,\bar X_1,\bar\xi_1)].
\]
If the stronger convergence $\nu^n_{\bar\xi} \to \nu_{\bar\xi}$ in $\c P_p(C([0,1]; \R^l ))$ holds, then by applying the following consequence of \cref{assumption g growth},
\[
    |g(\tilde\mu^n,\bar X^n_1,\bar\xi^n_1)| \leq C ( 1 + \c W_2^2(\tilde\mu^n,\delta_0) + |\bar X^n_1|^2 + |\bar\xi^n_1|^p),
\]
we can conclude that $(g(\tilde\mu^n,\bar X^n_1,\bar\xi^n_1))_n$ is uniform integrable.
Thus it follows from Vitali's convergence theorem that 
\[
    \E^{\tilde\P}[g(\tilde\mu^n_T,\bar X^n_1,\bar\xi^n_1)] \to \E^{\tilde\P}[g(\tilde\mu_T,\bar X_1,\bar\xi_1)].
\]
Along with the arguments given below this shows that the reward function is u.s.c.\@ and continuous if the stronger convergence $\nu^n_{\bar\xi} \to \nu_{\bar\xi}$ in $\c P_p(C([0,1]; \R^l ))$ holds.

We now turn to the singular cost term. It satisfies
\begin{align}
&\Big|\int_0^1 c(\bar r^n_u,\bar X^n_u,\bar\xi^n_u) d\bar \xi^n_u - \int_0^1 c(\bar r_u,\bar X_u,\bar\xi_u) d\bar \xi_u\Big|\\
&\leq \int_0^1 |c(\bar r^n_u,\bar X^n_u,\bar\xi^n_u) - c(\bar r_u,\bar X_u,\bar\xi_u)| d\bar \xi^n_u
+ \Big|\int_0^1 c(\bar r_u,\bar X_u,\bar\xi_u) d\bar\xi^n_u - \int_0^1 c(\bar r_u,\bar X_u,\bar\xi_u) d\bar \xi_u\Big|,\quad \tilde\P\text{-a.s.}
\end{align}
The first term can be bounded by
\[
\int_0^1 |c(\bar r^n_u,\bar X^n_u,\bar\xi^n_u) - c(\bar r_u,\bar X_u,\bar\xi_u)| d\bar\xi^n_u \leq \sup_{u\in [0,1]} |c(\bar r^n_u,\bar X^n_u,\bar\xi^n_u) - c(\bar r_u,\bar X_u,\bar\xi_u)||\bar\xi^n_1|,\quad\tilde\P\text{-a.s.}
\]
The local uniform continuity of $c$ by \cref{assumption c continuous} implies $\tilde\P$-a.s.\@ convergence to zero. Furthermore, the linear growth in $(x,\xi)$ from \cref{assumption c linear growth} implies that 
\[
    \sup_{u\in [0,1]} |c(\bar r^n_u,\bar X^n_u,\bar\xi^n_u) - c(\bar r_u,\bar X_u,\bar\xi_u)||\bar\xi^n_1| \leq 
    C\Big( 1 + \sup_{u\in [0,1]} |\bar X^n_u|^2 + \sup_{u\in [0,1]} |\bar \xi^n_u|^2 \Big)
    \quad\tilde\P\text{-a.s.},
\]
from which we deduce the uniform integrability of the singular cost term. Its $L^1$-convergence follows again from Vitali's convergence theorem. The second term vanishes $\tilde\P$-a.s.\@ due to the Portmanteau theorem. Using Vitali's convergence theorem again, this term also vanishes in $L^1$.

To deal with the running reward, we deduce from \cref{lemma S continuous} that 
\[
(X^n,\xi^n)\coloneqq \c S(\bar X^n,\bar\xi^n,\bar r^n)\to \c S(\bar X,\bar\xi,\bar r) \eqqcolon (X,\xi)\text{ in }D^0([0,T];\R^d\times\R^l)\quad\tilde\P\text{-a.s.\@ and in }L^2.
\]
Moreover, since
\[
\sup_{t\in [0,T]} |X^n_t|^2 + \sup_{t\in [0,T]} |\xi^n_t|^2 \leq \sup_{u\in [0,1]} |\bar X^n_u|^2 + \sup_{u\in [0,1]} |\bar \xi^n_u|^2,
\]
we see that 
$
\big(\sup_{t\in [0,T]} |X^n_t|^2 + \sup_{t\in [0,T]} |\xi^n_t|^2\big)_n
$
is also uniformly integrable. 
By rewriting running reward part in terms of $(X^n,\xi^n)$ and $(X,\xi)$, we see that
\begin{align}
&\E^{\tilde\P}\Big[\Big|\int_0^1 f(\bar r^n_u,\tilde\mu^n_{\bar r^n_u},\bar X^n_u,\bar\xi^n_u) d\bar r^n_u - \int_0^1 f(\bar r_u,\tilde\mu_{\bar r_u},\bar X_u,\bar\xi_u) d\bar r_u\Big|\Big]\\
&= \E^{\tilde\P}\Big[\Big|\int_0^T f(t,\tilde\mu^n_t,X^n_t,\xi^n_t) dt - \int_0^T f(t,\tilde\mu_t,X_t,\xi_t) dt\Big|\Big] \\
& \leq \E^{\tilde\P}\Big[\int_0^T |f(t,\tilde\mu^n_t,X^n_t,\xi^n_t)-f(t,\tilde\mu_t,X_t,\xi_t)| dt\Big].
\end{align}
Finally the quadratic growth and continuity of $f$ from \cref{assumption f quadratic growth,assumption f g continuous} imply by Vitali's convergence theorem the desired convergence.
\end{proof}

We are now ready to establish our representation result \eqref{J1} for the reward functional.

\begin{theorem}\label{lemma J control J sup parametrisations}
    Let \cref{assumptions state dynamics,assumptions gamma path independent,assumptions reward and cost functions} hold.
    We have for every $\tilde\mu\in\c P_2(D^0)$ and $\mu\in\c A(\tilde\mu)$,
    \begin{align}
    J(\tilde\mu,\mu) &= \max_{\nu\in\bar{\c A}(\tilde\mu)\text{ parametrisation of }\mu} J(\tilde\mu,\nu)\\
    &= \E^\mu \Big[g(\tilde\mu_T,X_T,\xi_T) + \int_0^T f(t,\tilde\mu_t,X_t,\xi_t) dt
     - \int_0^T c(t,X_t,\xi_t) d\xi_t^c - \sum_{t\in [0,T]} C_{D^0}(t,X_{t-},\xi_{t-},\xi_t)\Big],
     \label{eq lemma J control J sup parametrisations}
    \end{align}
    where $C_{D^0}$ is defined as in \cref{definition C_D0}.
\end{theorem}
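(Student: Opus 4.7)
The plan is to establish the two claimed equalities in turn via approximation arguments that leverage the continuity results just proved.

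\textbf{First equality.} For the inequality $J(\tilde\mu,\mu) \leq \sup_\nu J(\tilde\mu,\nu)$, I take a sequence $(\mu^n)_n \subseteq \c C(\tilde\mu)$ with $\mu^n \to \mu$ in $\c P_2(D^0)$ that attains the $\limsup$ in the definition of $J(\tilde\mu,\mu)$. Since each $\mu^n$ is continuous, the trivial time change $\bar r_u = uT$ together with $(\bar X_u,\bar\xi_u) = (X_{uT},\xi_{uT})$ defines a canonical parametrisation $\nu^n\in\bar{\c A}(\tilde\mu)$ of $\mu^n$, and a direct change of variables $t=\bar r_u$ shows $J_{\c C}(\tilde\mu,\mu^n) = J(\tilde\mu,\nu^n)$. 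Applying \cref{lemma wm1 convergence parametrisation convergence} to the constant sequence $\tilde\mu^n\equiv\tilde\mu$, I extract a subsequence of $(\nu^n)_n$ converging in $\c P_2(C)$ to a parametrisation $\nu\in\bar{\c A}(\tilde\mu)$ of $\mu$; upper semi-continuity of $J$ on $\Gamma_{\bar{\c A}}$ from \cref{lemma J parametrisations continuous}(i) then gives $\limsup_n J(\tilde\mu,\nu^n) \leq J(\tilde\mu,\nu)$. Conversely, given any parametrisation $\nu\in\bar{\c A}(\tilde\mu)$ of $\mu$, I may assume $\nu_{\bar\xi}\in\c P_p(C([0,1];\R^l))$ (otherwise $J(\tilde\mu,\nu) = -\infty$ by \cref{assumption g growth} and there is nothing to prove). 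Then \cref{lemma parametrisation construct lipschitz approximations} yields Lipschitz (hence continuous) admissible controls $\mu^n\in\c C(\tilde\mu)$ with parametrisations $\nu^n$ such that $(\mu^n,\nu^n)\to(\mu,\nu)$ and $\nu^n_{\bar\xi}\to\nu_{\bar\xi}$ in $\c P_p$, so \cref{lemma J parametrisations continuous}(ii) gives $J(\tilde\mu,\nu^n)\to J(\tilde\mu,\nu)$; since $J_{\c C}(\tilde\mu,\mu^n) = J(\tilde\mu,\nu^n)$, we conclude $J(\tilde\mu,\mu)\geq J(\tilde\mu,\nu)$.

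\textbf{Second equality.} With the first equality in hand, I expand the parametrised reward $J(\tilde\mu,\nu)$ via the generalised inverse time change $r_t = \inf\{u:\bar r_u > t\}$. Under $\c S$, the terminal and running-reward terms pull back directly to the corresponding $\E^\mu$-expressions. The singular-cost integral $\int_0^1 c(\bar r_u,\bar X_u,\bar\xi_u) d\bar\xi_u$ decomposes into a contribution from the complement of the flat set of $\bar r$, which pushes forward to $\int_0^T c(t,X_t,\xi_t) d\xi_t^c$, and a sum over the at most countably many maximal flat intervals $[r_{t-},r_t]$ of $\bar r$, which correspond precisely to the jump times of $(X,\xi)$. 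On each such interval the parametrised SDE reduces to the ODE $dy_u = \gamma(t,y_u,\zeta_u)d\zeta_u$ with $\zeta_0=\xi_{t-}$, $\zeta_1=\xi_t$ and $y_0=X_{t-}$, so the traversed curve belongs to the class $\Xi(t,X_{t-},\xi_{t-},\xi_t)$ of \cref{definition C_D0}. Hence the jump contribution is bounded below by $C_{D^0}(t,X_{t-},\xi_{t-},\xi_t)$, and summing yields $J(\tilde\mu,\nu)\leq$ the right-hand side of \eqref{eq lemma J control J sup parametrisations} for every parametrisation. To attain equality and promote the supremum to a maximum, I construct $\nu^\ast$ by taking any parametrisation $\nu_0$ of $\mu$ and overwriting, on each flat interval of $\bar r$, the curve traced by $(\bar X,\bar\xi)$ with a minimiser $(y^\ast,\zeta^\ast)\in\Xi(t,X_{t-},\xi_{t-},\xi_t)$ of the defining problem of $C_{D^0}$. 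The boundary values match and the new curve still satisfies the ODE defining $\Xi$; combined with \cref{assumptions gamma path independent}, which guarantees the resulting state increment is unchanged, this yields $\nu^\ast\in\bar{\c A}(\tilde\mu)$ parametrising $\mu$ with $J(\tilde\mu,\nu^\ast)$ equal to the right-hand side of \eqref{eq lemma J control J sup parametrisations}.

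\textbf{Main obstacle.} The delicate point is the construction of $\nu^\ast$: the minimisers must be chosen across the countably many jumps in a jointly measurable fashion so that $\nu^\ast$ remains a Borel probability measure on the parametrised canonical space. This calls for a measurable-selection argument (Kuratowski--Ryll-Nardzewski) applied to the multifunction $(t,x,\xi,\xi')\mapsto \arg\min_{(y,\zeta)\in\Xi(t,x,\xi,\xi')}\int_0^1 c(t,y_u,\zeta_u) d\zeta_u$. Non-emptiness of the selector (the minimum is attained, as noted after \cref{definition C_D0}) together with closed-valuedness and measurability, all of which follow from continuity of $\gamma$ and $c$ and equicontinuity of admissible interpolations under \cref{assumptions gamma bounded continuous}, yield a Borel selector. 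Once this selector is available, the remainder of the argument is routine time-change bookkeeping.
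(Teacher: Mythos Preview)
Your argument is correct and follows the paper's route: the first equality is obtained exactly via \cref{lemma wm1 convergence parametrisation convergence,lemma parametrisation construct lipschitz approximations,lemma J parametrisations continuous} with $\tilde\mu^n\equiv\tilde\mu$, and your explicit time-change decomposition together with the measurable-selection construction of $\nu^\ast$ spells out what the paper delegates to \autocite[Theorem~3.4]{denkert2023extended}. One cosmetic point: \cref{lemma wm1 convergence parametrisation convergence} produces its own parametrisations of the $\mu^n$ rather than a subsequence of your trivial ones, but since every parametrisation of a continuous control yields the same reward this does not affect the logic.
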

\begin{proof}
    Since $J(\tilde\mu,\mu) = -\infty$ for all $\mu_\xi\not\in\c P_p(D^0([0,T];\R^l))$ by \cref{assumptions reward and cost functions}, we only need to consider the case $$\mu_\xi\in\c P_p(D^0([0,T];\R^l)).$$ In view of \cref{lemma parametrisation construct lipschitz approximations,lemma wm1 convergence parametrisation convergence}, by choosing $\tilde\mu^n = \tilde\mu$ for all $n\in\N$, and the continuity of the reward function in parametrisations established in \cref{lemma J parametrisations continuous}, we obtain for all $\tilde\mu\in \c P_2(D^0)$ and $\mu\in\c A(\tilde\mu)$ that
    \begin{align}
    J(\tilde\mu,\mu) = \sup_{\nu\in\bar{\c A}(\tilde\mu)\text{ parametrisation of }\mu} J(\tilde\mu,\nu).
    \label{eq lemma J control J sup parametrisations first part}
    \end{align}
  Using \cref{assumptions gamma path independent} and following the same arguments given in the proof of \autocite[Theorem 3.4]{denkert2023extended} shows that the second equality in \eqref{eq lemma J control J sup parametrisations} for the second layer shows that the second equality in \eqref{eq lemma J control J sup parametrisations} also holds and furthermore that the supremum in \eqref{eq lemma J control J sup parametrisations first part} is attained.
  \end{proof}


\section{MFGs of parametrisations}\label{section mean-field game parametrisations}

In this section we introduce MFGs of parametrisations, that is, MFGs where the set of admissible controls is given by parametrised measure-flows. We have seen that parametrisations are a natural way to ``smooth'' singular controls by ``incorporating additional information on how jumps in the control variable are executed'' and that the reward functional is continuous on the set of ``relevant'' parametrisations. 

We prove that under \cref{assumptions state dynamics,assumptions reward and cost functions} any MFG of parametrisations admits a Nash equilibrium for general impact and cost functions $\gamma$ and $c$. To show that any equilibrium in parametrisations induces an equilibrium in the underlying MFGs with singular controls, \cref{assumptions gamma path independent} is required. 

One additional subtlety arises when working with parametrisations. Parametrisations come with their own time scale that might be different for different states of the world. As a result, we first need to reverse the time change using the mapping $\c S$ introduced in \cref{section parametrisations}. This results in an additional step in the formulation of MFG of parametrisations. Specifically, the MFG of parametrisations is defined as follows: 
\begin{align}\label{eq mean-field game parametrisations general definition}
\begin{cases}
    1.&\text{fix a parametrised measure flow }\tilde\nu \in \c P_2(C([0,1];\R^d\times\R^l\times [0,T])),\\
    2.&\text{recover the unparametrised measure flow }\tilde\mu \coloneqq \c S\#\tilde\nu \in \c P_2(D^0([0,T];\R^d\times\R^l)),\\
    3.&\text{solve the stochastic optimisation problem}\\
    &\sup_{\nu \in \bar{\c A}(\tilde\mu)} \E^\nu\Big[g(\tilde\mu_T,\bar X_1,\bar\xi_1) + \int_0^1 f(\bar r_u,\tilde\mu_{\bar r_u},\bar X_u,\bar\xi_u) d\bar r_u - \int_0^1 c(\bar r_u,\bar X_u,\bar\xi_u) d\bar\xi_u \Big],\\
    &\text{subject to the state dynamics}\\
    &d\bar X_u = b(\bar r_u,\tilde\mu_{\bar r_u},\bar X_u,\bar\xi_u) d\bar r_u + \sigma(\bar r_u,\tilde\mu_{\bar r_u},\bar X_u,\bar\xi_u) d\bar W_{\bar r_u} + \gamma(\bar r_u,\bar X_u,\bar\xi_u) d\bar\xi_u,\quad \bar X_0 = x_{0-}\\
    4.&\text{solve the fixed point problem }\nu^* = \c L(\bar X,\bar\xi,\bar r) = \tilde\nu.
\end{cases}
\end{align}

We are now ready to state the main result of this paper. The proof follows from \cref{lemma mean-field game nash equilibria sequence uniform estimate,lemma mean-field game parametrisations accumulation point,theorem mean-field game parametrisations accumulation nash equilibrium} given below. 


\begin{theorem}\label{theorem mean-field game parametrisations existence nash equilibrium}
    Under \cref{assumptions state dynamics,assumptions reward and cost functions} the mean-field game of parametrisations \eqref{eq mean-field game parametrisations general definition} admits a Nash equilibrium.
\end{theorem}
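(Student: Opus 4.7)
The plan is to construct the equilibrium as a limit of Nash equilibria of approximating MFGs in which the admissible parametrisations are restricted to being $L$-Lipschitz in $(\bar\xi,\bar r)$, and then to let $L\to\infty$. The benefit of the $L$-Lipschitz restriction is two-fold: the state dynamics reduce to a standard SDE driven by an absolutely continuous control, and the set $\bar{\c A}^L(\tilde\mu)$ of $L$-Lipschitz admissible parametrisations is compact in $\c P_2(C([0,1];\R^d\times\R^l\times [0,T]))$. Combined with the continuity of $J$ on the relevant parametrisations (\cref{lemma J parametrisations continuous}) and the continuity of the recovery map $\c S$ (\cref{lemma S continuous}) linking the outer fixed-point variable $\tilde\nu$ to the inner measure flow $\tilde\mu=\c S\#\tilde\nu$, this brings the $L$-Lipschitz MFG into the scope of standard Lacker-style existence arguments where the Kakutani--Fan--Glicksberg fixed point theorem is applied to the best-response correspondence $\tilde\nu\mapsto\arg\max_{\nu\in\bar{\c A}^L(\c S\#\tilde\nu)} J(\c S\#\tilde\nu,\nu)$. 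This produces, for every $L\in\N$, a Nash equilibrium $\nu^L$ of the $L$-Lipschitz MFG of parametrisations.

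The next step is to derive uniform-in-$L$ estimates. Since the trivial parametrisation $\bar\xi\equiv 0$, $\bar r_u = uT$ is always feasible in $\bar{\c A}^L(\tilde\mu)$, the equilibrium value $J(\tilde\mu^L,\nu^L)$ is bounded below by its reward, which is finite and independent of $L$ by \cref{assumption f quadratic growth,assumption g growth}. Combining this with the one-sided upper bound $g(m,x,\xi)\leq C(1+\c W_2^2(m,\delta_0)+|x|^2-|\xi|^p)$ with $p>2$ from \cref{assumption g growth}, the quadratic growth of $f$, the linear growth of $c$, and standard Gr\"onwall moment estimates for the parametrised SDE, this yields a uniform bound on $\E^{\nu^L}[|\bar\xi_1|^p+\sup_u|\bar X_u|^2]$ (this is the content envisaged for \cref{lemma mean-field game nash equilibria sequence uniform estimate}). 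After a canonical time-normalisation of the parametrisations (so that the $\bar r$ and $\bar\xi$ components are equicontinuous), tightness in $\c P_2(C)$ follows and a convergent subsequence $\nu^{L_n}\to\nu^*$ can be extracted (the content of \cref{lemma mean-field game parametrisations accumulation point}). Continuity of $\c S$ then yields $\tilde\mu^{L_n}=\c S\#\nu^{L_n}\to\tilde\mu^*=\c S\#\nu^*$ in $\c P_2(D^0)$, and stability of weak solutions of the parametrised SDE forces $\nu^*\in\bar{\c A}(\tilde\mu^*)$, so $\nu^*$ is a consistent fixed-point candidate.

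The most delicate step is to verify the optimality $J(\tilde\mu^*,\nu^*)\geq J(\tilde\mu^*,\nu')$ for every $\nu'\in\bar{\c A}(\tilde\mu^*)$, i.e.\@ to perform the content of \cref{theorem mean-field game parametrisations accumulation nash equilibrium}. One may restrict to $\nu'_{\bar\xi}\in\c P_p$ since otherwise $J(\tilde\mu^*,\nu')=-\infty$. Invoking \cref{lemma parametrisation construct lipschitz approximations} on the pair $(\tilde\mu^{L_n}\to\tilde\mu^*, \nu')$, there exist $k_n$-Lipschitz deviations $\nu'^n\in\bar{\c A}(\tilde\mu^{k_n})$ with $k_n\to\infty$ such that $\nu'^n\to\nu'$ in $\c P_2(C)$ and, crucially, $\nu'^n_{\bar\xi}\to\nu'_{\bar\xi}$ in the stronger $\c P_p$-topology. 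A diagonal relabelling aligns $k_n$ with the Lipschitz exponent $L_n$ of the approximating equilibrium, so each $\nu'^n$ is a legitimate deviation in the $L_n$-Lipschitz game indexed by $\tilde\mu^{L_n}$. The Nash inequality $J(\tilde\mu^{L_n},\nu^{L_n})\geq J(\tilde\mu^{L_n},\nu'^n)$ then passes to the limit: the $\c P_p$-continuity in part (ii) of \cref{lemma J parametrisations continuous} handles the right-hand side (giving $J(\tilde\mu^*,\nu')$), while the upper semi-continuity in part (i) handles the left-hand side (bounding $\limsup_n J(\tilde\mu^{L_n},\nu^{L_n})\leq J(\tilde\mu^*,\nu^*)$), and chaining the inequalities gives the desired optimality. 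The main obstacle is precisely this diagonal alignment: one must ensure that the Lipschitz approximations of a generic deviation $\nu'$ are simultaneously valid in the $L_n$-Lipschitz game and converge in the stronger $\c P_p$-topology on the control marginal, which is exactly what \cref{lemma parametrisation construct lipschitz approximations} is designed to make possible.
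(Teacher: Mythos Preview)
Your three–step scheme (approximate by Lipschitz–restricted games, extract a compact limit, verify optimality by approximating deviations) coincides with the paper's, and the limit step you describe is exactly \cref{theorem mean-field game parametrisations accumulation nash equilibrium}. The genuine difference lies in Step~1. The paper does \emph{not} solve an $L$-Lipschitz MFG of parametrisations; instead it reverts to a standard $K$-bounded velocity MFG on the original horizon $[0,T]$ (equation \eqref{eq mean-field game regular controls}), with regular control $u_t\in\{u\ge 0,\ |u|\le K\}$ and $\xi_t=\int_0^t u_s\,ds$. This is a textbook regular–control MFG, so existence is obtained by a direct citation of Lacker's result. The resulting equilibria $\mu^n\in\c C(\mu^n)$ are continuous controls on $[0,T]$; their parametrisations are only produced \emph{a posteriori} via \cref{lemma wm1 convergence parametrisation convergence}, and compactness is argued on the unparametrised side in \cref{lemma mean-field game parametrisations accumulation point}.

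Your route keeps the approximation entirely on the parametrised side, which is conceptually cleaner (no back-and-forth through $\c S$) but forces you to run a Kakutani--Fan--Glicksberg argument yourself on $\bar{\c A}^L(\c S\#\tilde\nu)$. This is not quite ``standard Lacker'': the parametrised SDE is driven by $d\bar W_{\bar r_u}$ with the time change $\bar r$ part of the control, so after writing $d\bar W_{\bar r_u}=\sqrt{\dot{\bar r}_u}\,d\bar B_u$ you land in a regular problem with control-dependent diffusion and control $(\dot{\bar r},\dot{\bar\xi})$ in a compact set; convexity of the best-response set then still needs relaxed controls, which you do not mention. None of this is fatal, but it is extra work that the paper sidesteps by pushing the Lipschitz approximation back to $[0,T]$ and quoting an off-the-shelf existence theorem. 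In short: both routes are valid; the paper's buys a one-line existence step at the cost of shuttling between $D^0$ and parametrisations, while yours stays in $\c P_2(C)$ throughout but must redo the fixed-point argument in a slightly non-standard noise structure.
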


Combined with the following \cref{theorem mean-field game parametrisation nash equilibrium is singular control nash equilibrium}, the above result yields the existence of a Nash equilibrium the original mean-field game \eqref{eq mean-field game singular control general definition} of singular controls. The proof of \cref{theorem mean-field game parametrisation nash equilibrium is singular control nash equilibrium} follows from \cref{lemma reducing parametrisations to controls,lemma J control J sup parametrisations}.

\begin{theorem}\label{theorem mean-field game parametrisation nash equilibrium is singular control nash equilibrium}
    If \cref{assumptions state dynamics,assumptions gamma path independent,assumptions reward and cost functions} are satisfied, then the following holds.
    \begin{enumerate}[wide, label=(\roman*)]
        \item 
        If $\mu$ is a Nash equilibrium of the mean-field game \eqref{eq mean-field game singular control general definition}, then there exists a parametrisation $\nu\in\bar{\c A}(\mu)$ such that $J(\mu,\mu) = J(\mu,\nu)$ and $\nu$ is a Nash equilibrium of the mean-field game \eqref{eq mean-field game parametrisations general definition} of parametrisations.
        \item
        Conversely, if $\nu$ is a Nash equilibrium of the mean-field game of parametrisations \eqref{eq mean-field game parametrisations general definition}, then the measure flow $\mu \coloneqq \c S\#\nu$ is a Nash equilibrium of the mean-field game of singular controls \eqref{eq mean-field game singular control general definition} and $J(\mu,\mu) = J(\mu,\nu)$.
    \end{enumerate}
\end{theorem}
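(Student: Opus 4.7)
The strategy is a diagram-chase that transfers equilibria in both directions using only the two structural results already in hand: \cref{lemma reducing parametrisations to controls}, which ensures the push-forward $\c S\#\nu$ of any parametrisation $\nu \in \bar{\c A}(\tilde\mu)$ lies in $\c A(\tilde\mu)$, and \cref{lemma J control J sup parametrisations}, which yields the representation
\[
    J(\tilde\mu,\mu) = \max_{\nu \in \bar{\c A}(\tilde\mu) \text{ parametrisation of } \mu} J(\tilde\mu,\nu),
\]
with the maximum attained. Throughout one must remember that a ``parametrisation of $\mu$'' means $\c S\#\nu = \mu$, so the unparametrisation map translates the fixed-point condition between the two games.

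For part (i), let $\mu$ be a Nash equilibrium of \eqref{eq mean-field game singular control general definition}. I would first apply \cref{lemma J control J sup parametrisations} to obtain a maximising $\nu \in \bar{\c A}(\mu)$ with $\c S\#\nu = \mu$ and $J(\mu,\mu) = J(\mu,\nu)$. This $\nu$ immediately satisfies the fixed-point condition of the parametrised game because $\c S\#\nu = \mu$ is, by construction, the unparametrised flow generated by $\nu$. For optimality against any competitor $\nu' \in \bar{\c A}(\mu)$, set $\mu' \coloneqq \c S\#\nu' \in \c A(\mu)$ via \cref{lemma reducing parametrisations to controls}; then the max representation gives $J(\mu,\nu') \leq J(\mu,\mu')$, and the Nash property of $\mu$ gives $J(\mu,\mu') \leq J(\mu,\mu) = J(\mu,\nu)$, closing the argument.

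For part (ii), let $\nu$ be a Nash equilibrium of \eqref{eq mean-field game parametrisations general definition} and put $\mu \coloneqq \c S\#\nu$. Admissibility $\mu \in \c A(\mu)$ follows directly from \cref{lemma reducing parametrisations to controls}, which also delivers the singular-game fixed-point condition. To verify optimality, pick any $\mu' \in \c A(\mu)$ and use \cref{lemma J control J sup parametrisations} to find a parametrisation $\nu' \in \bar{\c A}(\mu)$ of $\mu'$ attaining $J(\mu,\mu') = J(\mu,\nu')$; the Nash property of $\nu$ then gives $J(\mu,\mu') = J(\mu,\nu') \leq J(\mu,\nu)$. Applying the same representation to $\mu$ itself with a maximising parametrisation shows $J(\mu,\mu) \leq J(\mu,\nu)$, while the reverse inequality $J(\mu,\nu) \leq J(\mu,\mu)$ is immediate from the max representation since $\nu$ is itself a parametrisation of $\mu$. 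Combining these yields $J(\mu,\mu) = J(\mu,\nu)$ and hence the optimality of $\mu$.

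There is no real analytic obstacle beyond careful bookkeeping, since all continuity, compactness, and variational content has been absorbed into the two cited lemmas. The only subtle point is that \cref{assumptions gamma path independent} is implicitly invoked via \cref{lemma reducing parametrisations to controls} to guarantee that unparametrising a weak solution of the time-changed SDE returns a genuine weak solution of the Marcus-type SDE; without this the identification $\c S\#\nu \in \c A(\tilde\mu)$ would fail and the two games would no longer share the same equilibrium structure.
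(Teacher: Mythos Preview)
Your proof is correct and follows exactly the route the paper indicates: the paper simply states that the result follows from \cref{lemma reducing parametrisations to controls} and \cref{lemma J control J sup parametrisations}, and your argument is a faithful unpacking of that sketch. The diagram-chase you carry out—transferring the fixed-point condition via $\c S$ and comparing rewards through the max-representation—is precisely what those two lemmas are designed to deliver.
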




\subsection{The bounded velocity case}

As a first step towards the proof of \cref{theorem mean-field game parametrisations existence nash equilibrium} we restrict ourselves to bounded velocity controls 
\[
    \xi_t = \int_0^t u_s ds,\qquad t\in [0,T],
\]
with common Lipschitz constant $K > 0$. That is,  
\[
    u_s \geq 0, \quad |u_s| \leq K \quad \mbox{for all} \quad s\in [0,T].
\]

The advantage of working with bounded velocity controls is that the set of admissible controls is compact. Since every continuous control admits exactly one parametrisation up to a reparametrisation of time, MFGs of parametrisations can -- and should -- be formulated within the standard setting of MFG theory when continuous controls are considered. Thus, we consider the following standard MFG with regular control $u$: 
%
\begin{align}
    \label{eq mean-field game regular controls}
    \begin{cases}
    1.&\text{fix a measure flow }\tilde\mu\in\c P_2(C([0,T];\R^d\times\R^l)),\\
    2.&\text{solve the stochastic optimisation problem}\\
    &\sup_\mu \E^\mu\Big[g(\tilde\mu_T,X_T,\xi_T) + \int_0^T \bigl( f(t,\tilde\mu_t,X_t,\xi_t) - c(t,X_t,\xi_t) u_t\bigr) dt \Big],\\
    &\parbox[t]{.8\textwidth}{over all $\mu\in\c C(\tilde\mu)$ such that $u$ exists, $u_t\geq 0$ and $|u_t|\leq K$ for all $t\in [0,T]$, $\mu$-a.s., subject to the state dynamics}\\
    &dX_t = \bigl(b(t,\tilde\mu_t,X_t,\xi_t) + \gamma(t,X_t,\xi_t) u_t\bigr) dt + \sigma(t,\tilde\mu_t,X_t,\xi_t) dW_t,\quad X_{0-} = x_{0-},\\
    &d\xi_t = u_t dt,\quad \xi_{0-} = 0,\\
    3.&\text{solve the fixed point problem }\mu^* = \c L(X,\xi) = \tilde\mu.
    \end{cases}
\end{align}
We call the above MFG a $K$-bounded velocity MFG. Such games have been extensively studied in the recent literature. In particular, we have the following result. 


\begin{theorem}[{\autocite[Corollary 3.8]{lacker_mean_2015}}]
    Under \cref{assumptions state dynamics,assumptions reward and cost functions}, the $K$-bounded velocity MFG \eqref{eq mean-field game regular controls} admits a Nash equilibrium. 
\end{theorem}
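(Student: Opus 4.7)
The plan is to verify that the $K$-bounded velocity MFG \eqref{eq mean-field game regular controls} fits into Lacker's general existence framework \autocite{lacker_mean_2015}, so that the cited corollary applies directly. Since the velocity $u$ takes values in the compact convex set $U := [0,K]^l$ and $\xi$ is simply its cumulative integral, it is natural to view this as a standard (non-singular) MFG with enlarged state $Y := (X,\xi)$ and bounded regular control $u$.

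First I would recast \eqref{eq mean-field game regular controls} in Lacker's form by introducing the drift $\tilde b(t, m, Y, u) = (b(t, m, X, \xi) + \gamma(t, X, \xi)u,\, u)$ and the block-diagonal diffusion $\tilde\sigma(t, m, Y, u)$ whose upper block is $\sigma(t, m, X, \xi)$ and whose lower block vanishes. Under \cref{assumption b sigma continuous,assumptions gamma bounded continuous}, the enlarged coefficients $\tilde b, \tilde\sigma$ are continuous in $t$, Lipschitz in $(m, Y)$ uniformly in $(t, u) \in [0,T] \times U$, and of at most linear growth in $Y$. Similarly, the running reward $F(t, m, Y, u) = f(t, m, X, \xi) - c(t, X, \xi)\cdot u$ and the terminal reward $G(m, Y) = g(m, X, \xi)$ are continuous by \cref{assumption f g continuous,assumption c continuous}, with polynomial growth controlled by \cref{assumption f quadratic growth,assumption g growth,assumption c linear growth}.

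Second, since $0 \leq u^i_t \leq K$, any admissible flow $\mu$ satisfies the deterministic bound $|\xi_t| \leq K l t$, and standard SDE moment estimates yield uniform control $\E^\mu[\sup_{t \leq T} |X_t|^q] \leq C_q$ for every $q \geq 1$. Consequently the collection of reachable joint laws of $(X,\xi)$ forms a relatively compact subset of $\c P_p(C([0,T]; \R^d \times \R^l))$ for any $p \geq 2$, including the $p > 2$ coming from \cref{assumption g growth}. Together with the compactness of the action set $U$, this supplies the tightness and moment hypotheses under which Lacker's argument produces a fixed point in the space of exogenous measure flows.

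The main obstacle is to verify the joint continuity of the representative agent's reward along sequences $(\tilde\mu^n, \mu^n) \to (\tilde\mu, \mu)$ of exogenous and admissible flows, which in turn drives upper hemi-continuity of the best-response correspondence. Stability of the SDE in the measure argument under \cref{assumption b sigma continuous,assumptions gamma bounded continuous}, combined with the uniform $p$-moment bounds and the continuity/polynomial growth of $f, g, c$, delivers this via Vitali's theorem (analogous to, but simpler than, the continuity argument in \cref{lemma J parametrisations continuous}). Once continuity is in place, Berge's maximum theorem provides a non-empty compact convex-valued best-response correspondence that is upper hemi-continuous in $\tilde\mu$, and the Kakutani-Fan-Glicksberg fixed point theorem then furnishes the desired Nash equilibrium, which is precisely the content of \autocite[Corollary 3.8]{lacker_mean_2015}.
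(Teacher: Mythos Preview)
Your proposal is correct and aligns with the paper's approach: the paper provides no proof of its own and simply cites \autocite[Corollary 3.8]{lacker_mean_2015}, so verifying that the enlarged-state regular control problem with compact action set $U=[0,K]^l$ satisfies Lacker's hypotheses is exactly what the citation presupposes. Your outline of the verification (Lipschitz/growth conditions on $\tilde b,\tilde\sigma$ from \cref{assumptions state dynamics}, continuity and growth of $F,G$ from \cref{assumptions reward and cost functions}, uniform moment bounds, and the Berge/Kakutani machinery) is accurate and more explicit than anything the paper spells out.
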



\subsection{The unbounded case}

Having established the existence of an equilibrium in the bounded velocity case, we are now going to prove that a Nash equilibrium in parametrisations in the unconstrained case can be obtained in terms of weak limits of sequences of bounded velocity equilibria $(\mu^n)_n$ with Lipschitz constants $K=n$. 

In a first step we prove in \cref{lemma mean-field game nash equilibria sequence uniform estimate,lemma mean-field game parametrisations accumulation point} that the sequence $(\mu^n)_n$ is bounded w.r.t.~the 2-Wasserstein distance and relatively compact in $\c P_2(D^0)$. It hence admits a weak accumulation point $\mu$. 

In a second step we use that by \cref{lemma wm1 convergence parametrisation convergence} there exist admissible parametrisations $\nu^n$ of $\mu^n$ that converge to an admissible parametrisation $\nu$ of $\mu$ along a suitable subsequence. Using the continuity of the reward function in parametrisations we can then prove that $\nu$ is an equilibrium in the MFG of parametrisations and hence with \cref{theorem mean-field game parametrisation nash equilibrium is singular control nash equilibrium} that $\mu$ is an equilibrium in the underlying MFG with singular controls.  


\begin{lemma}\label{lemma mean-field game nash equilibria sequence uniform estimate}
    Let \cref{assumptions state dynamics,assumptions reward and cost functions} hold and let $(\mu^n)_n$ be a sequence of bounded velocity Nash equilibria with respective Lipschitz constants $K=n$ of the $n$-bounded velocity mean-field games \eqref{eq mean-field game regular controls}. Then, 
    \begin{align}
        \sup_n \c W_p^p(\mu^n,\delta_0) = \sup_n \E^{\mu^n}\Big[\sup_{t\in [0,T]} |X_t|^p + |\xi_T|^p\Big] < \infty.
    \end{align}
\end{lemma}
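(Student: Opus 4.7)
The plan is to exploit the Nash optimality of each $\mu^n$ against the comparison strategy $u \equiv 0$, which is admissible in every $n$-bounded velocity MFG \eqref{eq mean-field game regular controls} since $0$ satisfies the constraint $|0| \leq n$. Writing $\tilde\mu^n = \mu^n$ by the fixed-point condition and denoting by $X^{n,0}$ the state process under the zero control, standard SDE moment estimates based on \cref{assumption b sigma continuous} give $\E[\sup_t |X^{n,0}_t|^2] \leq C(1 + \c W_2^2(\mu^n, \delta_0))$. Combining this with the quadratic lower bounds in \cref{assumption g growth,assumption f quadratic growth} applied at $\xi_T = 0$ yields
\[
    J(\mu^n, \mu^{n,0}) \geq -C\bigl(1 + \c W_2^2(\mu^n, \delta_0)\bigr).
\]

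Next I would upper-bound the equilibrium reward $J(\mu^n, \mu^n)$. The crucial ingredient is the negative $-|\xi_T|^p$ term in the upper bound of \cref{assumption g growth}. Using the quadratic growth of $f$ from \cref{assumption f quadratic growth}, the linear growth of $c$ from \cref{assumption c linear growth}, and the bounded velocity form $\int_0^T c(t,X_t,\xi_t) d\xi_t = \int_0^T c(t,X_t,\xi_t) u_t dt$ so that
\[
    \Bigl| \int_0^T c(t,X_t,\xi_t)\, d\xi_t \Bigr| \leq C\bigl(1 + \sup_t |X_t| + \xi_T\bigr)\,\xi_T,
\]
I would dispose of the cross term via Young's inequality, splitting $(\sup_t|X_t|)\,\xi_T$ into a small multiple of $|\xi_T|^p$ (absorbable by the $-|\xi_T|^p$ contribution from $g$) and a quadratic remainder in $\sup_t|X_t|$. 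For the second moment of the state itself, a BDG and Gronwall argument applied to the SDE, using that $\gamma$ is bounded by \cref{assumptions gamma bounded continuous} so $\bigl|\int_0^t \gamma u_s\, ds\bigr|^2 \leq C|\xi_T|^2$, produces $\E[\sup_t|X_t|^2] \leq C(1 + \E|\xi_T|^2)$.

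The main obstacle is closing the self-referential loop created by $\c W_2^2(\mu^n, \delta_0) \leq \E[\sup_t|X_t|^2] + \E|\xi_T|^2$ appearing on both sides of the Nash inequality $J(\mu^n, \mu^n) \geq J(\mu^n, \mu^{n,0})$. Combining the two bounds with this inequality leads to an estimate of the form
\[
    c_0\, \E|\xi_T|^p \leq C\bigl(1 + \E|\xi_T|^2\bigr),
\]
where $c_0 > 0$ arises from the $-|\xi_T|^p$ term. The strict growth $p > 2$ is decisive here: it forces $\sup_n \E|\xi_T|^p < \infty$. Once this bound is established, a further BDG and Gronwall argument at the $p$-th moment level, using $\bigl|\int_0^t \gamma u_s\, ds\bigr|^p \leq C|\xi_T|^p$ together with the Jensen-type estimate $\c W_2(\mu^n_s, \delta_0)^p \leq C(\E|X_s|^p + \E|\xi_s|^p)$ (valid since $p \geq 2$), yields $\E[\sup_t|X_t|^p] \leq C(1 + \E|\xi_T|^p)$, and the lemma follows.
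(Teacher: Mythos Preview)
Your proposal is correct and follows essentially the same route as the paper: compare against the zero control via the Nash inequality, isolate the $-|\xi_T|^p$ contribution from \cref{assumption g growth}, control $\E[\sup_t|X_t|^2]$ and $\E[\sup_t|X^{n,0}_t|^2]$ by Gronwall/BDG estimates in terms of $\E|\xi_T|^2$, and close the loop using $p>2$. The only cosmetic difference is that the paper handles the cross term $(\sup_t|X_t|)\,\xi_T$ via the simple quadratic Young split rather than absorbing a small multiple of $|\xi_T|^p$, arriving directly at $\E|\xi_T|^p \leq C(1+\E|\xi_T|+\E|\xi_T|^2)$.
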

\begin{proof}
    Let us fix an $n\in\N$. Since $\mu^n\in\c C(\mu^n)$ is a continuous, admissible control, there exists a probability measure $\P\in\c P_2(\tilde\Omega)$ with $\mu^n = \P_{(X,\xi)}$ under which the state dynamics satisfies 
    \begin{align}
        dX_t &= b(t,\mu^n_t,X_t,\xi_t) dt + \sigma(t,\mu^n_t,X_t,\xi_t) d W_t + \gamma(t,X_t,\xi_t)  d\xi_t,\quad t\in [0,T],\quad X_{0-} = x_{0-}.
    \end{align}
    
    Let $X^{n,0}$ be the solution to the state dynamics for the constant control $\xi \equiv 0$, that is,
    \begin{align}
        dX^{n,0}_t &= b(t,\mu^n_t,X^{n,0}_t,0) dt + \sigma(t,\mu^n_t,X^{n,0}_t,0) d W_t,\quad X^{n,0}_0 = x_{0-},
    \end{align}
    and let $\mu^{n,0} \coloneqq \P_{(X^{n,0},0)}$ be the corresponding weak control. By construction $\mu^{n,0}\in \c C(\mu^n)$ and so 
    \[
    J(\mu^n,\mu^{n,0}) \leq J(\mu^n,\mu^n).
    \]
    In view of \cref{assumptions reward and cost functions}
    \begin{align}
        J(\mu^n,\mu^{n,0})
        &= \E^{\mu^{n,0}}\Big[\int_0^T f(t,\mu^n_t,X_t,0) dt + g(\mu^n_T,X_T,0) \Big]\\
        &\geq - C_f\int_0^T \Big(1 + \c W_2^2(\mu^n_t,\delta_0) + \E^{\mu^{n,0}}[|X_t|^2]\Big) dt - C_{g,1}\Big(1 + \c W_2^2(\mu^n_T,\delta_0) + \E^{\mu^{n,0}}[|X_T|^2]\Big),
    \end{align}
    and
    \begin{align}
        J(\mu^n, \mu^n)
        &\leq C_f\int_0^T \Big(1 + \c W_2^2(\mu^n_t,\delta_0) + \E^{\mu^n}[|X_t|^2 + |\xi_t|^2]\Big) dt + C_{g,1}\Big(1 + \c W_2^2(\mu^n_T,\delta_0) + \E^{\mu^n}[|X_T|^2]\Big)\\
        &\quad- C_{g,2} \E^{\mu^n}[|\xi_T|^p]
        + C_c \E^{\mu^n}\Big[\int_0^T (1 + |X_t| + |\xi_t|) d|\xi_t|\Big],
    \end{align}
    where the constants $C_f,C_{g,1},C_{g,2},C_c$ depend only on $f,g$ respectively $c$.
    Putting these results together, we obtain that
    \begin{align}
         \E^{\mu^n}[|\xi_T|^p]
         &\leq C_{f,g}\int_0^T (1 + \E^{\mu^n}[|X_t|^2 + |\xi_t|^2] + \E^{\mu^{n,0}}[|X_t|^2]) dt\\
         &\quad+ C_{f,g} (1 + \E^{\mu^n}[|X_T|^2 + |\xi_T|^2] + \E^{\mu^{n,0}}[|X_T|^2])\\
         &\quad+ C_{c,g} \Big(1 + \E^{\mu^n}\Big[\sup_{t\in [0,T]} |X_t|^2 + |\xi_T| + |\xi_T|^2\Big] \Big)\\
         &\leq C_{c,f,g} \Bigl(1 + \E^{\mu^n}\Big[\sup_{t\in [0,T]} |X_t|^2 + |\xi_T| + |\xi_T|^2\Big] + \sup_{t\in [0,T]} \E^{\mu^{n,0}}[|X_t|^2]\Bigr).
    \end{align}
    To further bound the right-hand side, we apply a standard Gronwall-type argument to get that
    \[
    \E^{\mu^n}\Bigl[\sup_{t\in [0,T]} |X_t|^{p'}\Bigr]
    \leq C ( 1 + \E^{\mu^n}[|\xi_T|^{p'}] ),
    \qquad\text{for all }p'\in [2,p],
    \]
    and
    \[
    \E^{\mu^{n,0}}\Bigl[\sup_{t\in [0,T]} |X_t|^2\Bigr]
    \leq C \Bigl( 1 + \sup_{t\in [0,T]} \c W_2^2(\mu^n_t,\delta_0) \Bigr)
    \leq C ( 1 + \E^{\mu^n}[|\xi_T|^2] ),
    \]
    where the $C$ is independent of $n$. Hence, 
    \[
        \E^{\mu^n}\Big[\sup_{t\in [0,T]} |X_t|^p + |\xi_T|^p\Big]
        \leq C (1 + \E^{\mu^n}[|\xi_T| + |\xi_T|^2]).
    \]
    Since $p>2$, this implies that 
    \[
    \sup_n \E^{\mu^n}\Big[\sup_{t\in [0,T]} |X_t|^p + |\xi_T|^p\Big] < \infty.
    \]
\end{proof}

The next lemma shows that the sequence of bounded velocity equilibria is relatively compact. 

\begin{lemma}\label{lemma mean-field game parametrisations accumulation point}
    Let \cref{assumptions state dynamics} hold and let $q > 2$. Let $(\tilde\mu^n)_n\subseteq \c P_q(D^0)$ be a sequence of measure flows and $(\mu^n)_n$ be a sequence of corresponding continuous controls with $\mu^n\in\c C(\tilde\mu^n)$ satisfying
    \begin{align}\label{eq lemma 3.4 uniform q estimate}
        \sup_n \big[\c W_q^q(\tilde\mu^n,\delta_0) + \c W_q^q(\mu^n,\delta_0)\big] < \infty.
    \end{align}
    Then $(\mu^n)_n\subseteq \c P_2(D^0)$ is relatively compact.
\end{lemma}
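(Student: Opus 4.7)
To prove relative compactness in $\c P_2(D^0)$, I would combine two ingredients: (i) a uniform $q$-th moment bound on $\sup_t \lvert X^n_t \rvert + \lvert \xi^n_T \rvert$ from SDE estimates, which, since $q > 2$, yields uniform integrability of the squared sup-norm (the ``moment piece'' of $\c P_2$-compactness); and (ii) weak tightness of $(\mu^n)_n$ in the $WM_1$-topology on $D^0$, obtained via a detour through parametrisations using the continuity of $\c S$ from \cref{lemma S continuous}. Together, these give the claimed relative compactness in $\c P_2(D^0)$ by the standard characterisation of $\c P_2$-compactness.

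\textbf{Step 1 (moment bound).} Each $\mu^n$ is the marginal law of a weak solution to the SDE \eqref{eq continuous singular control state dynamics} with measure flow $\tilde\mu^n$. Under \cref{assumption b sigma continuous,assumptions gamma bounded continuous}, $b$ and $\sigma$ are Lipschitz in $(x,\xi)$ with linear growth in $\c W_2(\tilde\mu^n_t,\delta_0)$ and $\gamma$ is bounded. Applying Burkholder--Davis--Gundy to the stochastic integral, bounding the singular integral by $\lvert \gamma \rvert_\infty \lvert \xi^n_T \rvert$, and invoking Gronwall's lemma yields
\[
\E^{\mu^n}\Bigl[\sup_{t \in [0,T]} \lvert X^n_t \rvert^q\Bigr] \leq C\bigl(1 + \c W_q^q(\tilde\mu^n,\delta_0) + \E^{\mu^n}[\lvert \xi^n_T \rvert^q]\bigr),
\]
which is uniformly bounded in $n$ by the hypothesis \eqref{eq lemma 3.4 uniform q estimate}. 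Since $\xi^n$ is non-decreasing with $\xi^n_0 = 0$, we have $\sup_t \lvert \xi^n_t\rvert = \lvert \xi^n_T\rvert$, so $\sup_n \E^{\mu^n}[\sup_t \lvert X^n_t\rvert^q + \sup_t \lvert\xi^n_t\rvert^q] < \infty$, and $q>2$ then gives uniform integrability of the squared sup-norm.

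\textbf{Step 2 (weak tightness via parametrisations).} For each $n$, construct a canonical parametrisation $\nu^n$ on $[0,1]$ by a constant-velocity time change. Let $\tau^n := T + \lvert\xi^n_T\rvert$, let $\bar r^n: [0,1] \to [0,T]$ be the strictly increasing inverse of $t \mapsto (t + \lvert \xi^n_t \rvert)/\tau^n$, and set $\bar X^n := X^n \circ \bar r^n$, $\bar \xi^n := \xi^n \circ \bar r^n$. Then $(\bar X^n, \bar\xi^n, \bar r^n)$ lies in $\c D(\c S)$, is continuous, and on the event $\{\tau^n \leq K\}$ the pair $(\bar\xi^n,\bar r^n)$ is $K$-Lipschitz. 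The time-changed SDE for $\bar X^n$, together with BDG and Kolmogorov's continuity criterion, produces uniform moduli-of-continuity bounds for $\bar X^n$ on $\{\tau^n \leq K\}$; Markov's inequality applied to $\sup_n \E[\lvert\xi^n_T\rvert^q] < \infty$ controls $\P(\tau^n > K)$ uniformly in $n$. Hence the laws $\nu^n$ of $(\bar X^n,\bar\xi^n,\bar r^n)$ are tight in $\c P(C([0,1];\R^d\times\R^l\times [0,T]))$ with the uniform topology. Since $\bar r^n$ is strictly increasing, $\c S(\bar X^n,\bar\xi^n,\bar r^n) = (X^n,\xi^n)$ and thus $\mu^n = \c S\#\nu^n$; as $\c S$ is continuous by \cref{lemma S continuous}, tightness of $(\nu^n)_n$ transfers to tightness of $(\mu^n)_n$ in $\c P(D^0)$ equipped with the $WM_1$-topology.

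\textbf{Conclusion and main obstacle.} Combining weak tightness (Step 2) with uniform integrability of squared sup-norms (Step 1) yields relative compactness of $(\mu^n)_n$ in $\c P_2(D^0)$. The main technical difficulty is Step 2: the time-change $\tau^n$ is random and only uniformly $L^q$-integrable, so the parametrised paths are Lipschitz with a random constant and one must truncate on $\{\tau^n \leq K\}$. The route via parametrisations is essential because it allows the tightness analysis to be carried out in the friendlier uniform topology on continuous paths, and the continuity of $\c S$ then delivers $WM_1$-tightness of the original processes ``for free''.
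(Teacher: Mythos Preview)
Your overall strategy --- pass to parametrisations, establish tightness there, and push forward through $\c S$ --- is exactly the paper's, but the execution diverges in two places and leaves one genuine gap.

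\textbf{Differences in execution.} The paper does not parametrise the full state process. It first decomposes $X = x_{0-} + L + \Gamma$ with $L_t = \int_0^t b\,ds + \int_0^t \sigma\,dW$ and $\Gamma_t = \int_0^t \gamma\diamond d\xi$. Tightness of $(L^n)_n$ is obtained directly in $\c P_2(C([0,T];\R^d))$ from the uniform $q$-th moment bound on the coefficients (no time change needed). Only $(\Gamma,\xi)$ is then parametrised, via the time change $r_t \propto t + \arctan(\mathrm{Var}(\xi,[0,t]))$. The $\arctan$ compresses the unbounded total variation into a bounded range, so $(\bar\xi^n,\bar r^n)$ has a \emph{deterministic} uniform modulus of continuity and Arzel\`a--Ascoli applies without any truncation. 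Your linear normalisation by $\tau^n = T + |\xi^n_T|$ forces truncation on $\{\tau^n \le K\}$, and your ``BDG + Kolmogorov for $\bar X^n$'' step is problematic because the time change $\bar r^n$ depends on $\xi^n_T$ and is therefore anticipative, so you cannot apply BDG directly to the time-changed stochastic integral. The paper's decomposition sidesteps this entirely: the stochastic integral lives in $L$, which is handled before any time change.

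\textbf{The gap.} Your Step 5 asserts that tightness of $(\nu^n)_n$ in $C$ transfers to tightness of $(\mu^n)_n = (\c S\#\nu^n)_n$ in $D^0$ by continuity of $\c S$. But \cref{lemma S continuous} gives $\c S$ Lipschitz only on its domain $\c D(\c S)$, and $\c D(\c S)$ is \emph{not closed} in $C$: a uniform limit of parametrisations with strictly increasing $\bar r^n$ can have constant stretches of $\bar r$ on which the limiting $\bar x$ need not be monotone. In that situation $(\c S(z_k))_k$ is $d_{WM_1}$-Cauchy (by Lipschitz) yet need not converge in $D^0$, so the image of a tight family need not be tight. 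The paper confronts this head-on: it shows that any accumulation point $\P$ of $(\P^n_{(\bar\Gamma,\bar\xi,\bar r)})_n$ satisfies $\bar\Gamma_u = \int_0^u \gamma(\bar r_v,x_{0-}+\bar L_v+\bar\Gamma_v,\bar\xi_v)\,d\bar\xi_v$ in the limit (via Kurtz--Protter stability), and then invokes \cref{assumptions gamma montone} to conclude that $\bar\Gamma$ is monotone on every interval where $\bar r$ is constant, i.e.\ $(\bar\Gamma,\bar\xi,\bar r)\in\c D(\c S)$ $\P$-a.s. This is precisely where the structural assumption on $\gamma$ enters, and your sketch omits it. Without this verification (or an equivalent argument tailored to your parametrisation of $X$ rather than $\Gamma$), the transfer of tightness through $\c S$ is unjustified.

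A minor remark: your Step 1 is redundant, since the uniform $q$-moment bound on $\mu^n$ is exactly the hypothesis \eqref{eq lemma 3.4 uniform q estimate}.
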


\begin{proof}
    Since the control process $\xi$ is non-decreasing under $\mu^n$, for all $n\in\N$, the relative compactness of the sequence $(\mu^n_\xi)_n \subseteq \c P_2(D^0([0,T];\R^l))$ follows directly from \eqref{eq lemma 3.4 uniform q estimate}.
    
    Let us now turn to the state process $X$. For any sequence of continuous weak controls $(\mu^n)_n$ with $\mu^n\in \c C(\tilde\mu^n)\subseteq \c A(\tilde\mu^n)$ there exists a sequence of corresponding weak solutions $(\P^n)_n\subseteq \c P_2(\tilde\Omega)$ to our Marcus-type SDE \eqref{eq singular control state dynamics}. Now let us decompose the process $X$ into
    \[
    \Gamma_t \coloneqq \int_0^t \gamma(s,X_{s},\xi_{s}) \diamond d\xi_{s},\qquad t\in [0,T],
    \]
    and
    \[
    L_t \coloneqq X_t - x_{0-} - \Gamma_t,\qquad t \in [0,T].
    \]
    Since the state process $X$ satisfies the dynamics \eqref{eq singular control state dynamics} $\P^n$-a.s.\@ with the given measure flow $\tilde\mu^n$, we have that 
    \[
    L_t = \int_0^t b(s,\tilde\mu^n_{s},X_{s},\xi_{s}) ds + \int_0^t \sigma(s,\tilde\mu^n_{s},X_{s},\xi_{s}) dW_{s},\qquad t\in [0,T],\qquad \P^n\text{-a.s}.
    \]
   In view of
   \eqref{eq lemma 3.4 uniform q estimate}, 
    \[
    \sup_n \E^{\mu^n}\Bigl[\sup_{t\in [0,T]} (|b(t,\tilde\mu^n_{t},X_{t},\xi_{t})|^q + |\sigma(t,\tilde\mu^n_{t},X_{t},\xi_{t})|^q)\Bigr] < \infty,
    \]
    and so it follows from \autocite{zheng1995tightness} that the sequence $(\P^n_L)_n$ is relatively compact in $\c P_2(C([0,T];\R^d))$. 

    To show that the sequence $(\P^n_{\Gamma})_n$ is relatively compact in $\c P_2(D^0([0,T];\R^d))$ we construct explicit $WM_1$-parametrisations as follows. We start by introducing the strictly monotone function
    \[
    r_{t} \coloneqq \frac{t + \arctan(\Var(\xi,[0,t]))}{T + \frac\pi 2},\qquad t\in [0,T],
    \]
    where $\Var$ denotes the total Variation. We denote its generalised inverse we denote by
    \[
    \bar r_v \coloneqq \inf\{t\in [0,T] | r_{t} > v\} \land T,\qquad v\in [0,1].
    \]
    We use $\bar r$ as our new time scale and correspondingly define the rescaled processes 
    \[
    \bar L_u \coloneqq L_{\bar r_u},\quad \bar\Gamma_u \coloneqq \Gamma_{\bar r_u},\quad \bar\xi_u \coloneqq \xi_{\bar r_u},\qquad u\in [0,1].
    \]
    
    To show that the sequence $(\P^n_{(\bar\xi,\bar r)})_n$ is relatively compact in $\c P_2(C([0,1];\R^l\times [0,T]))$ we first note that the monotonicity of $\xi$ implies that
    \begin{align}
        |\bar r_u - \bar r_v| + |\arctan(\Var(\xi, [0,\bar r_u])) - \arctan(\Var(\xi, [0,\bar r_v]))|
        \leq (T + \tfrac \pi 2) |u-v|,\quad u,v\in [0,1].
    \end{align}
    Since 
    \begin{align}
        |\bar\xi_u - \bar\xi_v| \leq |\Var(\xi,[0,\bar r_u]) - \Var(\xi,[0,\bar r_v])|,\qquad u,v\in [0,1],
    \end{align}
    and using that the $\arctan$ function is uniformly continuous on compact intervals, we obtain the desired relative compactness of from the Arzelà-Ascoli theorem along with condition \eqref{eq lemma 3.4 uniform q estimate}. 
    Using that $\norm{\gamma} < \infty$ we see that therefore $(\P^n_{(\bar\Gamma,\bar\xi,\bar r)})_n$ is relatively compact in $\c P_2(C([0,1];\R^d\times\R^l\times [0,T]))$.

    We now transfer the relative compactness result back to the unparametrised measure flow $(\P^n_{(\Gamma,\xi)})_n$ using the mapping $\c S$ introduced in \cref{definition unparametrisation map s}. Since $\c S$ is continuous by \cref{lemma S continuous} it suffices to show that the closure $\overline{(\P^n_{(\bar\Gamma,\bar\xi,\bar r)})_n}$ is still supported on the domain of $\c S$, more precisely that 
   \[ 
        \mu(\c D(\c S)) = 1 \quad \mbox{for all} \quad \P\in \overline{(\P^n_{(\bar\Gamma,\bar\xi,\bar r)})_n}.
\]
    For this we note that for every accumulation point $\P_{(\bar\Gamma,\bar\xi,\bar r)}$ we can find due to the relative compactness of $(\P^n_{\bar L})_n$ a subsequence $\P^{n_k}_{(\bar L,\bar\Gamma,\bar\xi,\bar r)} \to \P_{(\bar L,\bar\Gamma,\bar\xi,\bar r)}$ in $\c P_2(C)$. Now using that by construction for all $n\in\N$,
    \begin{align}
        \bar\Gamma_u = \int_0^u \gamma(\bar r_v,x_{0-} + \bar L_v + \bar\Gamma_v,\bar\xi_v) d\bar\xi_v,\qquad u\in [0,1],\qquad \P^n\text{-a.s.},
    \end{align}
    the convergence ensures due to \autocite[Theorem 7.10]{Kurtz1996} this relation also holds under $\P$. Thus \cref{assumptions gamma montone} ensure that $\P$-a.s.\@ that $\bar\Gamma$ is monotone on every interval where $\bar r$ is constant and therefore, 
\[    
   (\bar\Gamma,\bar\xi,\bar r)\in\c D(\c S) \quad \mbox{$\P$-a.s.}
\]

    The continuity of $\c S$ by \cref{lemma S continuous} now implies the relative compactness of the sequence 
 \[   
    (\c S\#\P^n_{(\bar\Gamma,\bar\xi,\bar r)})_n  = (\P^n_{(\Gamma,\xi)})_n. 
    \]
Since $X = x_{0-} + L + \Gamma$ this shows that $(\mu^n)_n = (\P^n_{(X,\xi)})_n \subseteq \c P_2(D^0)$ is relatively compact.
\end{proof}

\begin{theorem}\label{theorem mean-field game parametrisations accumulation nash equilibrium}
    Let \cref{assumptions state dynamics,assumptions reward and cost functions}, hold and let $\mu$ be an accumulation point of a sequence of Nash equilibria $(\mu^n)_n$ of the $n$-bounded velocity mean-field games \eqref{eq mean-field game regular controls}. Then there exists a parametrisation $\nu$ of $\mu$ such that $\nu$ is a Nash equilibria of the mean-field game \eqref{eq mean-field game parametrisations general definition} of parametrisations.
\end{theorem}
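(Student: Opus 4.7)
The strategy is to first extract a parametrisation $\nu$ of $\mu$ as the limit of parametrisations of the approximating bounded-velocity equilibria $\mu^n$, and then verify that $\nu$ is an equilibrium in the MFG of parametrisations by testing against arbitrary deviations $\nu' \in \bar{\c A}(\mu)$, which we approximate by $k$-Lipschitz continuous controls admissible in the $k$-bounded velocity MFGs.

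By passing to a subsequence (still denoted by $n$), assume $\mu^n \to \mu$ in $\c P_2(D^0)$. Since each Nash equilibrium is a continuous bounded-velocity control with $\mu^n \in \c C(\mu^n) \subseteq \c A(\mu^n)$, \cref{lemma wm1 convergence parametrisation convergence} applied with $\tilde\mu^n = \mu^n$ yields, along a further subsequence (still denoted $n$), parametrisations $\nu^n \in \bar{\c A}(\mu^n)$ of $\mu^n$ such that $\nu^n \to \nu$ in $\c P_2(C)$ for some $\nu \in \bar{\c A}(\mu)$. Because $\mu^n$ is continuous, every parametrisation of $\mu^n$ yields the same reward by a time-change argument, so the maximum in \cref{lemma J control J sup parametrisations} is attained by each such parametrisation and, in particular, $J(\mu^n, \mu^n) = J(\mu^n, \nu^n)$.

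Now fix an arbitrary $\nu' \in \bar{\c A}(\mu)$. If $\nu'_{\bar\xi} \notin \c P_p(C([0,1];\R^l))$, then $J(\mu,\nu') = -\infty$ by \cref{assumption g growth} and the inequality $J(\mu,\nu') \leq J(\mu,\nu)$ is trivial. Otherwise, \cref{lemma parametrisation construct lipschitz approximations} applied to the sequence $\mu^n \to \mu$ and the parametrisation $\nu'$ of $\mu$ produces an increasing sequence $(k_m)_m \subseteq \N$ with $k_m \to \infty$ and $k_m$-Lipschitz continuous controls $\hat\mu^m \in \c C(\mu^{k_m}) \subseteq \c A(\mu^{k_m})$ with $k_m$-Lipschitz parametrisations $\hat\nu^m \in \bar{\c A}(\mu^{k_m})$ of $\hat\mu^m$ such that $(\hat\mu^m, \hat\nu^m) \to (\mu,\nu')$ in $\c P_2(D^0) \times \c P_2(C)$, together with the enhanced convergence $\hat\nu^m_{\bar\xi} \to \nu'_{\bar\xi}$ in $\c P_p$. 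Since $\hat\mu^m$ has velocity bounded by $k_m$, it is admissible in the $k_m$-bounded velocity MFG with measure flow $\mu^{k_m}$, so the Nash property of $\mu^{k_m}$ combined with \cref{lemma J control J sup parametrisations} yields
\[
J(\mu^{k_m}, \mu^{k_m}) \,\geq\, J(\mu^{k_m}, \hat\mu^m) \,\geq\, J(\mu^{k_m}, \hat\nu^m).
\]

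Finally, we pass to the limit $m \to \infty$. On the right, the enhanced $\c P_p$-convergence of $\hat\nu^m$ together with $\mu^{k_m} \to \mu$ and the continuity part \cref{lemma J parametrisations continuous}(ii) give $J(\mu^{k_m}, \hat\nu^m) \to J(\mu,\nu')$. On the left, $\nu^{k_m} \to \nu$ as a subsequence of $\nu^n \to \nu$, and the upper semi-continuity part \cref{lemma J parametrisations continuous}(i) yields $\limsup_m J(\mu^{k_m}, \mu^{k_m}) = \limsup_m J(\mu^{k_m}, \nu^{k_m}) \leq J(\mu,\nu)$. Chaining these bounds delivers $J(\mu,\nu') \leq J(\mu,\nu)$, and since $\nu'$ was arbitrary, $\nu$ is the desired Nash equilibrium of the MFG of parametrisations. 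The main technical obstacle is the asymmetry between u.s.c.\ (sufficient on the equilibrium side via the weaker $\c P_2$-convergence of $\nu^{k_m}$) and full continuity (required on the deviation side); the enhanced $\c P_p$-convergence in \cref{lemma parametrisation construct lipschitz approximations} is precisely what bridges this gap, so the crucial point is ensuring that the approximations produced there are simultaneously admissible in the correct $k_m$-bounded velocity MFG \emph{and} converge in the stronger control-topology.
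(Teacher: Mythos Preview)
Your argument is correct and mirrors the paper's proof essentially step for step: extract the limiting parametrisation via \cref{lemma wm1 convergence parametrisation convergence}, approximate an arbitrary deviation $\nu'$ by Lipschitz controls via \cref{lemma parametrisation construct lipschitz approximations}, and pass to the limit using the u.s.c./continuity dichotomy of \cref{lemma J parametrisations continuous}. One minor caveat: your appeal to \cref{lemma J control J sup parametrisations} for the identities $J(\mu^n,\mu^n)=J(\mu^n,\nu^n)$ and $J(\mu^{k_m},\hat\mu^m)\geq J(\mu^{k_m},\hat\nu^m)$ is not quite licit here, since that result also requires \cref{assumptions gamma path independent}, which the present theorem does not assume --- but for \emph{continuous} controls these identities hold directly by the time-change argument you already invoke (and this is exactly how the paper justifies them), so the logic stands.
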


\begin{proof}
    Let us assume that $\mu^n\to\mu$ in $\c P_2(D^0)$. Since $\mu^n$ is a Nash equilibria of the bounded velocity MFG we know that $\mu^n\in\c C(\mu^n)$.
    Thus, by \cref{lemma wm1 convergence parametrisation convergence} we further obtain a sequence of parametrisations $(\nu^n)_n$ with $\nu^n\in \bar{\c A}(\mu^n)$ and a parametrisation $\nu\in\bar{\c A}(\mu)$ of $\mu$ such that, along a suitable subsequence, 
    \[
    \nu^n\to\nu \quad \mbox{in}  \quad \c P_2(C). 
    \]    
        
    Let us w.l.o.g.\@ assume in the following that $\nu^n\to\nu$ in $\c P_2(C)$ and let $\nu'\in\bar{\c A}(\mu)$ be another admissible response to the measure flow $\mu$. If $\nu'_{\bar\xi}\notin\c P_p(C)$, then $J(\mu,\nu') = - \infty$ and thus $J(\mu,\nu) \geq J(\mu,\nu')$. 
    
    For $\nu'_{\bar\xi}\in\c P_p(C)$ it follows from \cref{lemma parametrisation construct lipschitz approximations} that there exists an increasing sequence $(n_k)_k\subseteq\N$ and parametrisations $\hat\nu^k\in \bar{\c A}(\mu^{n_k})$ such that $\hat\nu^k\to \nu'$ in $\c P_2(C)$ and $\hat\nu^k_{\bar\xi} \to \nu_{\bar\xi}'$ in $\c P_p(C)$ and $\hat\mu^k\coloneqq \c S\#\hat\nu^k \in \c A(\mu^{n_k})$ is $n_k$-Lipschitz continuous. Hence $\hat\mu^k$ is an admissible control to the $n_k$-bounded velocity MFG. Since each $\mu^{n_k}$ is a Nash equilibrium to the $n_k$-bounded velocity MFG we know that
    \[
    J(\mu^{n_k},\nu^{n_k}) = J(\mu^{n_k},\mu^{n_k}) \geq J(\mu^{n_k},\hat\mu^k) = J(\mu^{n_k},\hat\nu^k).
    \]
    After taking the limit and using \cref{lemma J parametrisations continuous}, this implies that
    \[
    J(\mu,\nu) \geq \limsup_{k\to\infty} J(\mu^{n_k},\nu^{n_k})
    \geq \lim_{k\to\infty} J(\mu^{n_k},\hat\nu^k) = J(\mu,\nu').
    \]
    Since $\nu'\in\bar{\c A}(\mu)$ was arbitrary, this shows that $\nu$ is indeed a Nash equilibrium for the MFG \eqref{eq mean-field game parametrisations general definition}.
\end{proof}


\section{Conclusion} \label{conclusion}

We established a probabilistic framework for analysing finite-time extended MFGs with multi-dimensional singular controls and state-dependent jump dynamics and costs. Our choice of admissible controls enables an explicit characterisation of the reward function. As the reward function will in general only be u.s.c., we introduced a novel class of MFGs with a broader set of admissible controls, called MFGs of parametrisations. We proved that the reward functional is continuous on the set of parametrisations and established the existence of equilibria, both in MFGs of parametrisations and in the original MFG with singular controls coincide. 

Several avenue are open for future research. First, our focus was on existence of equilibria; no uniqueness of equilibrium results were obtained. In fact, we do not expect our approach to provide a good framework for analysing uniqueness problems. 

Second, we did not consider $N$-player games. Although we strongly expect that the connection between singular controls and parametrisations can also be utilised to prove the existence of equilibria in games with finitely many players, the analysis of $N$-player games and their connections to MFGs is beyond the scope of this paper. 

Third, we considered finite horizon games. MFGs with singular controls on infinite horizons were considered by many authors including \autocite{cao_stationary_2022,dianetti_nonzero-sum_2020,Frederico-etal,Dianetti-Ferrari-Tzouanas}. We strongly expect our results to carry over to infinite-horizon games after suitable modifications of the cost functions the set of admissible controls; see for instance \autocite[Remark 2.12]{dianetti_nonzero-sum_2020} for a ``similar'' extension. The extension to infinite horizon games, too, is left for future research. 


\appendix

\section{Proof of {\cref{lemma wm1 convergence parametrisation convergence}}}\label{appendix proof lemma wm1 convergence parametrisation convergence}

The construction below is based on the proof of \autocite[Theorem 2.8]{denkert2023extended}.

\begin{proof}
    Let us start by setting up our probability space. For any sequence of continuous weak controls $(\mu^n)_n$ with $\mu^n\in \c C(\tilde\mu^n)\subseteq \c A(\tilde\mu^n)$ there exists a sequence of corresponding weak solutions $(\P^n)_n\subseteq \c P_2(\tilde\Omega)$ to our Marcus-type SDE. 
    
    Since $\mu^n\to\mu$ in $\c P_2(D^0)$, the set $(\P^n)_n$ is weakly compact in $\c P_2(\tilde\Omega)$ and thus there exists $\P\in\c P_2(\tilde\Omega)$ such that $\P_{(X,\xi)} = \mu$ and $\P^n\to\P$. Furthermore, since $\tilde\Omega$ is separable, we can use Skorokhod's representation theorem to obtain a joint probability space $(\tilde\Omega,\tilde{\c F},\tilde\P)$ and processes $(X^n,\xi^n,W^n)_n$ and $(X,\xi,W)$ with 
    \[
    \tilde\P_{(X^n,\xi^n,W^n)} = \P^n \quad \mbox{and} \quad \tilde\P_{(X,\xi,W)} = \P
    \] 
    such that
    \begin{align}
        (X^n,\xi^n,W^n) \to (X,\xi,W)\text{ in }\tilde\Omega,\qquad\tilde\P\text{-a.s.\@ and in }L^2.
    \end{align}
    Since $W^n$ is an $\b F^{X^n,\xi^n,W^n}$-Brownian motion for every $n\in\N$, we know that $W$ is also an $\b F^{X,\xi,W}$-Brownian motion. As next step, we define the auxiliary processes
    \begin{align}
        L^n_{t} \coloneqq \int_0^{t} b(s,\tilde\mu^n_{s},X^n_{s},\xi^n_{s}) ds + \int_0^{t} \sigma(s,\tilde\mu^n_{s},X^n_{s},\xi^n_{s}) dW^n_{s},\qquad t\in [0,T],
    \end{align}
    and similarly
    \begin{align}
        L_{t} \coloneqq \int_0^{t} b(s,\tilde\mu_{s},X_{s},\xi_{s}) ds + \int_0^{t} \sigma(s,\tilde\mu_{s},X_{s},\xi_{s}) dW_{s},\qquad t\in [0,T].
    \end{align}
    Since
    \[
    L^n_{t} = X^n_{t} - x_{0-} - \int_0^{t} \gamma(s,X^n_{s},\xi^n_{s}) d\xi^n_{s},\quad t\in [0,T],
    \]
    and $\norm{\gamma}_\infty <\infty$, the sequence $(L^n)_n$ is also $L^2$-uniform integrable. Together with \autocite[Theorem 7.10]{Kurtz1996} this shows that
    \[
    L^n\to L\text{ in }D^0([0,T];\R^d)\qquad\tilde\P\text{-a.s.\@ and in }L^2.
    \]
    Since $L^n$ and $L$ are continuous, this convergence can be strengthened to 
    \begin{align}\label{eq lemma 2.8 C convergence L}
    L^n\to L\text{ in }C([0,T];\R^d)\qquad\tilde\P\text{-a.s.\@ and in }L^2.
    \end{align}

    As next step, we construct the parametrisations $(\nu_n)_n$ of $(\mu_n)_n$. This construction is similar to the second layer in \autocite[Theorem 2.8]{denkert2023extended}. We start by defining for all $\omega\in\tilde\Omega$ the function
    \[
    r^n_{t}(\omega) \coloneqq \frac{t + \arctan(\Var(\xi^n(\omega),[0,t]))}{T + \frac\pi 2},\quad t\in [0,T].
    \]
    The function $r^n$ is strictly monotone and we denote its generalised inverse by
    \[
    \bar r^n_v(\omega) \coloneqq \inf\{t\in [0,T]|r^n_{t}(\omega) > v\}\land T,\quad v\in [0,1].
    \]
    This function $\bar r^n$ will be our new time change; we define the corresponding processes $\bar X^n,\bar\xi^n,\bar L^n$ as follows:
    \[
    \bar X^n_u \coloneqq X^n_{\bar r^n_u},\quad \bar\xi^n_u \coloneqq \xi^n_{\bar r^n_u},\quad \bar L^n_u\coloneqq L^n_{\bar r^n_u},\quad u\in [0,1].
    \]
    By construction $\nu^n\coloneqq \tilde\P_{(\bar X^n,\bar\xi^n,\bar r^n)}$ is a parametrisation of $\mu^n$ and $\nu^n\in\bar{\c A}(\tilde\mu^n)$ since $\mu^n\in\c A(\tilde\mu^n)$. Having defined the parametrisations $(\nu^n)_n$ we now construct their limit along a suitable subsequence. We start by proving that the sequence $$(\tilde\P_{(\bar\xi^n,\bar r^n)})_n\subseteq \c P_2(C([0,1];\R^l\times [0,T]))$$ is relatively compact. Due to the monotonicity of the process $\xi^n$ we have that
    \begin{align}
        |\bar r^n_u - \bar r^n_v| \leq (T+\tfrac{\pi}{2}) |u-v|,\quad u,v\in [0,1],
    \end{align}
    and
    \begin{align}
        |\arctan(\Var(\xi^n,[0,\bar r^n_u]))-\arctan(\Var( \xi^n,[0,\bar r^n_v]))|\leq (T+\tfrac\pi 2)|u-v|,\quad u,v\in [0,1].
    \end{align}
    Using that the $\arctan$ function is uniformly continuous on compact intervals together with the estimate
    \begin{align}
        |\bar\xi^n_u - \bar\xi^n_v| \leq |\Var(\xi^n,[0,\bar r^n_u])-\Var(\xi^n,[0,\bar r^n_v])|,\quad u,v\in [0,1],
    \end{align}
    allows us to apply the Arzelà–Ascoli theorem to show the relative compactness of the sequence $(\tilde\P_{(\bar\xi^n,\bar r^n)})_n$. 

    Using Skorokhod's representation theorem again we can assume that the sequence $(\bar\xi^n,\bar r^n)_n$ also has a convergent subsequence in $L^2$ along which
    \[
    (\bar\xi^n,\bar r^n) \to (\bar\xi,\bar r)\text{ in }C([0,1];\R^l\times [0,T])\quad\tilde\P\text{-a.s.\@ and in }L^2.
    \]
    
    To define the state process $\bar X$, we first use \eqref{eq lemma 2.8 C convergence L} to introduce the process $\bar L$ as follows:
    \[
    \bar L^n = L^n_{\bar r^n} \to L_{\bar r} \eqqcolon \bar L\text{ in }C([0,1];\R^l)\quad\tilde\P\text{-a.s.\@ and in }L^2.
    \]
    Next, we introduce the process
    \[
        \bar\Gamma^n_u
        \coloneqq \int_0^u \gamma(\bar r^n_v,\bar X^n_v,\bar\xi^n_v) d\bar\xi^n_v,\quad u\in [0,1].
     \]
     Since $\nu_n\in\bar{\c A}(\tilde\mu^n)$ the quadruple $(\bar X^n,\bar\xi^n,\bar r^n,W^n)$ satisfies the SDE \eqref{eq weak solution parametrisation SDE}. Thus, $\bar X^n = x_{0-} + \bar L^n + \bar\Gamma^n$ and  
      \begin{align}\label{eq lemma 2.8 bar gamma n definition}
       \bar\Gamma^n_u = \int_0^u \gamma(\bar r^n_v,x_{0-} + \bar L^n_v + \bar\Gamma^n_v,\bar\xi^n_v) d\bar\xi^n_v,\quad u\in [0,1].
    \end{align}
    
    Using that the sequence $(\tilde\P_{\bar\xi^n})_n\subseteq\c P_2(C([0,1];\R^l))$ is relatively compact and that $\norm{\gamma}_\infty<\infty$, we again conclude from the Arzelà-Ascoli theorem that the sequence $(\tilde\P_{\bar\Gamma^n})_n$ is also relatively compact in $\c P_2(C([0,1];\R^d))$. Thus, using Skorokhod's representation theorem we may assume w.l.o.g.~that there exists a function $\bar\Gamma$ such that
    \[
    \bar\Gamma^n\to \bar\Gamma\text{ in }C([0,1];\R^d)\quad\tilde\P\text{-a.s.\@ and in }L^2.
    \]
    Using \eqref{eq lemma 2.8 bar gamma n definition} together with \autocite[Theorem 7.10]{Kurtz1996}, we see that
    \begin{align}\label{eq lemma 2.8 bar gamma sde}
        \bar\Gamma = \lim_{n\to\infty} \bar\Gamma^n = \lim_{n\to\infty} \int_0^\cdot \gamma(\bar r^n_v,x_{0-} + \bar L^n_v + \bar\Gamma^n_v,\bar\xi^n_v) d\bar\xi^n_v = \int_0^\cdot \gamma(\bar r_v,x_{0-} + \bar L_v + \bar\Gamma_v,\bar\xi_v) d\bar\xi_v.
    \end{align}
    Thus we can define our limit state process $\bar X$ as the limit of $(\bar X^n)_n$ as follows
    \[
    \bar X^n = x_{0-} + \bar L^n + \bar\Gamma^n\to x_{0-} + \bar L + \bar\Gamma \eqqcolon \bar X\text{ in }C([0,1];\R^d)\quad\tilde\P\text{-a.s.\@ and in }L^2.
    \]

    While we have shown already that the sequence of parametrisations $(\nu^n)_n$ converges to a limit $\nu\coloneqq\tilde\P_{(\bar X,\bar\xi,\bar r)}$ along a subsequence, it remains to show that $\nu\in\bar{\c A}(\tilde\mu)$ and that $\nu$ is a parametrisation of $\mu$.

    We first show that $(\bar X,\bar\xi,\bar r)\in \c D(\c S)$, $\tilde\P$-a.s. We recall that $\bar L \coloneqq L_{\bar r}$, which implies that on every interval where $\bar r$ is constant, $\bar L$ is constant too. Moreover, $\bar\Gamma$ is monotone on each such interval by \cref{assumptions gamma montone}. 
    Due to $\bar X = x_{0-} + \bar L + \bar\Gamma$ we conclude that $(\bar X,\bar\xi,\bar r)\in\c D(\c S)$, $\tilde\P$-a.s. We can now use the continuity of the mapping $\c S$ established in \cref{lemma S continuous} to obtain that
    \begin{align}
    S(\bar X,\bar\xi,\bar r) = \lim_{n\to\infty} \c S(\bar X^n,\bar\xi^n,\bar r^n) = \lim_{n\to\infty} (X^n,\xi^n) = (X,\xi).
    \end{align}
    This implies that $\mu = \c S\#\nu$. Now plugging this into the definition of $L$, we obtain 
    \begin{align}
    L_{t} &= \int_0^{t} b(s,\tilde\mu_{s},X_{s},\xi_{s}) ds + \int_0^{t} \sigma(s,\tilde\mu_{s},X_{s},\xi_{s}) dW_{s}\\
    &= \int_0^{t} b(s,\tilde\mu_{s},\c S(\bar X,\bar\xi,\bar r)_{s}) ds + \int_0^{t} \sigma(s,\tilde\mu_{s},\c S(\bar X,\bar\xi,\bar r)_{s}) dW_{s}\\
    &= \int_0^{t} b(s,\tilde\mu_{s},\bar X_{r_{s}},\bar\xi_{r_{s}}) ds + \int_0^{t} \sigma(s,\tilde\mu_{s},\bar X_{r_{s}},\bar\xi_{r_{s}}) dW_{s},\quad t\in [0,T].
    \end{align}
    Therefore, we arrive at
    \[
    \bar L_u = L_{\bar r_u} = \int_0^u b(\bar r_v,\tilde\mu_{\bar r_v},\bar X_v,\bar\xi_v) d\bar r_v + \int_0^u \sigma(\bar r_v,\tilde\mu_{\bar r_v},\bar X_v,\bar\xi_v) dW_{\bar r_v},\quad u\in [0,1].
    \]
    Together with \eqref{eq lemma 2.8 bar gamma sde} shows that $(\bar X,\bar\xi,\bar r)$ indeed satisfies the SDE \eqref{eq weak solution parametrisation SDE}. Further, since every $W^n$ is an $(\c F^W_{t}\lor \c F^{\bar X^n,\bar\xi^n,\bar r^n}_{r^n_{t}})_{t\in [0,T]}$-Brownian motion, their limit $W$ is an $(\c F^W_{t}\lor \c F^{\bar X,\bar\xi,\bar r}_{r_{t}})_{t\in [0,T]}$-Brownian motion and thus $$\nu\coloneqq \tilde\P_{(\bar X,\bar\xi,\bar r)}\in\bar{\c A}(\tilde\mu).$$ 
\end{proof}


\section{Proof of {\cref{lemma parametrisation construct lipschitz approximations}}}\label{appendix proof lemma parametrisation construct lipschitz approximations}
The proof follows from the following three lemmas. The first two lemmas are essentially simpler versions of \autocite[Appendix B]{denkert2023extended}. We include a detailed proofs to keep the paper self-contained.

\begin{lemma}\label{lemma a1}
    Let \cref{assumptions state dynamics} hold.
    Let $\tilde\mu\in\c P_2(D^0)$ be a given measure flow and $\nu\in\bar{\c A}(\tilde\mu)$ be a parametrisation of $\mu\in \c P_2(D^0)$. Then there exists a sequence of Lipschitz continuous parametrisations $(\nu^n)_n\subseteq\bar{\c A}(\tilde\mu)$ of $(\mu^n)_n \coloneqq (\c S\#\nu^n)_n\subseteq \c P_2(D^0)$, such that
    \[
    (\mu^n,\nu^n) \to (\mu,\nu)\qquad\text{ in }\c P_2(D^0)\times \c P_2(C).
    \]
    If $(\mu_\xi,\nu_{\bar\xi}) \in \c P_q(D^0([0,T];\R^l))\times\c P_q(C([0,1];\R^l))$ for some $q > 2$, then we can choose $(\mu^n,\nu^n)_n$ such that 
    \[
    (\mu^n_\xi,\nu^n_{\bar\xi}) \to (\mu_\xi,\nu_{\bar\xi})\quad\text{ in } \quad \c P_q(D^0([0,T];\R^l))\times\c P_q(C([0,1];\R^l)).
    \]
\end{lemma}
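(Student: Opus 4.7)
The plan is to construct each approximating parametrisation $\nu^n$ from $\nu$ by a pathwise random outer time-reparametrisation that slows the canonical process $(\bar X,\bar\xi,\bar r)$ precisely when $\bar\xi$ or $\bar r$ move rapidly. Fixing a canonical realisation $(\bar X,\bar\xi,\bar r,\bar W)$ with law $\nu$, for each $n\in\N$ I would introduce the strictly increasing continuous bijection $V^n:[0,1]\to[0,1]$ defined pathwise by
\[
V^n_u := \frac{u + \tfrac{1}{n}\bigl(\sum_{i=1}^l \bar\xi^{(i)}_u + \bar r_u\bigr)}{1 + \tfrac{1}{n}\bigl(\sum_{i=1}^l \bar\xi^{(i)}_1 + T\bigr)},
\]
set $\alpha^n := (V^n)^{-1}$, and define $\bar X^n := \bar X\circ\alpha^n$, $\bar\xi^n := \bar\xi\circ\alpha^n$, $\bar r^n := \bar r\circ\alpha^n$, with $\nu^n$ their joint law.

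The first step will be to verify that $\nu^n\in\bar{\c A}(\tilde\mu)$. The boundary conditions $\bar\xi^n_0=0$, $\bar r^n_0=0$, $\bar r^n_1=T$ are immediate because $\alpha^n$ fixes $0$ and $1$, and monotonicity of $\bar\xi^n,\bar r^n$ is preserved. The parametrised SDE \eqref{eq weak solution parametrisation SDE} is invariant under such outer reparametrisations: pathwise change of variables reduces the Lebesgue--Stieltjes integrals against $d\bar r^n$ and $d\bar\xi^n$ to the corresponding integrals of the original processes, and the stochastic integral against $d\bar W_{\bar r^n_u}$ transforms correctly because $r^n_t = V^n(r_t)$ and $\alpha^n$ is a bijection of $[0,1]$, so the history $\sigma$-algebras $\c F^{\bar X^n,\bar\xi^n,\bar r^n}_{r^n_t}$ and $\c F^{\bar X,\bar\xi,\bar r}_{r_t}$ coincide; the filtration from \cref{definition weak solution parametrisation} built from the rescaled processes equals $\bar{\b F}$, so $\bar W$ remains a Brownian motion in the new filtration. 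The Lipschitz property follows immediately from the definition of $V^n$: for $0\le v\le u\le 1$,
\[
|\bar\xi^n_u-\bar\xi^n_v| + (\bar r^n_u-\bar r^n_v) \le \Bigl(n + \textstyle\sum_{i=1}^l\bar\xi^{(i)}_1+T\Bigr)(u-v),
\]
so each sample path of $(\bar\xi^n,\bar r^n)$ is Lipschitz with a $\nu$-a.s.\@ finite constant.

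Convergence will follow from $V^n_u\to u$ uniformly on each sample path, since the perturbation term vanishes like $1/n$; hence $\alpha^n\to\mathrm{id}_{[0,1]}$ uniformly a.s. Uniform continuity of $(\bar X,\bar\xi,\bar r)$ on $[0,1]$ then yields $(\bar X^n,\bar\xi^n,\bar r^n)\to(\bar X,\bar\xi,\bar r)$ in $C$ a.s., and the $L^2$-convergence is obtained by dominated convergence from the domination $\sup_u|\bar X^n_u|\le\sup_u|\bar X_u|$, $\sup_u|\bar\xi^n_u|=|\bar\xi_1|$, $\sup_u|\bar r^n_u|=T$ and $\nu\in\c P_2(C)$, giving $\nu^n\to\nu$ in $\c P_2(C)$. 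Since $\alpha^n$ is a bijection of $[0,1]$ fixing its endpoints, a direct calculation $r^n_t=V^n(r_t)$ gives $\c S(\bar X^n,\bar\xi^n,\bar r^n)=\c S(\bar X,\bar\xi,\bar r)$ pathwise, so $\mu^n=\c S\#\nu^n=\c S\#\nu=\mu$ for every $n$, and the required $\c P_2(D^0)$-convergence is trivial. The stronger $\c P_q$-convergence of the marginals $\nu^n_{\bar\xi}$, when $\nu_{\bar\xi}\in\c P_q$, will follow from the same domination $\sup_u|\bar\xi^n_u|\le|\bar\xi_1|\in L^q$.

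The main obstacle I expect is the careful verification of the first step, namely that the random outer time change preserves the parametrised SDE and, in particular, the transformation of the stochastic integral against $d\bar W_{\bar r_u}$, together with the coincidence of the new and old filtrations. A secondary subtlety, should a deterministic (rather than $\omega$-dependent) Lipschitz constant be required, is that the construction has to be combined with a truncation on $\{|\bar\xi_1|\le M_n\}$ for $M_n\to\infty$ with $\nu(|\bar\xi_1|>M_n)\to 0$, gluing in a fixed Lipschitz default parametrisation on the complement; this perturbs the laws by a vanishing amount and preserves all desired convergences while producing a deterministic Lipschitz constant of order $n+M_n+T$.
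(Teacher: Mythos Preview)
Your reparametrisation idea is exactly the paper's Step~2, but the paper precedes it with a crucial Step~1 that you omit: first truncate the control to $\bar\xi^K:=\bar\xi\wedge K$, solve the parametrised SDE with $\bar\xi^K$, and only then reparametrise via
\[
\beta^{K,\varepsilon}_u=\frac{u+\varepsilon(\bar r_u+\operatorname{Var}(\bar\xi^K,[0,u]))}{1+\varepsilon(T+lK)}.
\]
The point of truncating first is that the denominator becomes \emph{deterministic}.

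That is precisely where your argument breaks. Your denominator $D=1+\tfrac1n(\sum_i\bar\xi^{(i)}_1+T)$ depends on $\bar\xi_1$, and this future information leaks into the new filtration. Concretely: from the stopped path $(\bar\xi^n,\bar r^n)|_{[0,r^n_t]}$ one reads off $\alpha^n|_{[0,r^n_t]}$ (via $\bar r^n$), and then the identity $\alpha^n(v)+\tfrac1n(\sum_i\bar\xi^{n,(i)}_v+\bar r^n_v)=Dv$ determines $D$, hence $\bar\xi_1$. So $\bar\xi_1\in\c F^{\bar X^n,\bar\xi^n,\bar r^n}_{r^n_t}$ for any $t>0$. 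In a legitimate parametrisation $\nu\in\bar{\c A}(\tilde\mu)$ nothing prevents $\bar\xi_1$ from depending on the full path of $\bar W$ (e.g.\ $\bar r_u=Tu$, $\bar\xi_u=\int_0^u\phi(\bar W_{Ts})\,ds$), so your enlarged filtration $\bar{\c F}^n_t$ can contain $\sigma(\bar W_T)$ at arbitrarily small $t$, and $\bar W$ is \emph{not} a $\bar{\c F}^n$-Brownian motion. Your claim that the old and new filtrations coincide is therefore false, and with it the conclusion $\nu^n\in\bar{\c A}(\tilde\mu)$.

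Your proposed fix at the end---restricting to $\{|\bar\xi_1|\le M_n\}$ and gluing a default on the complement---targets the deterministic Lipschitz constant, but does not address the problem: on $\{|\bar\xi_1|\le M_n\}$ the denominator is still random and the anticipation persists. What works is to truncate $\bar\xi$ itself (not an event) so that the normalisation can be taken deterministic, as the paper does; this simultaneously yields a deterministic Lipschitz constant and keeps the time change adapted. The price is that you lose the pleasant identity $\mu^n=\mu$, which is why the paper needs a Gronwall argument for $\nu^K\to\nu$ and then invokes \cref{lemma S continuous} for $\mu^K\to\mu$.
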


\begin{proof}
    This proof is based on the construction of \autocite[Lemma B.1]{denkert2023extended}. Since our parametrisations only involve a single layer, the proof will be simpler. We construct the desired parametrisations in two steps. 

    \begin{enumerate}[wide]
        \item[\textbf{Step 1. \emph{Truncating the given parametrisation.}}]
        We are given the measure flow $\tilde\mu\in \c P_2(D^0)$ and a parametrisation $\nu\in\bar{\c A}(\tilde\mu)$ of $\mu\in\c P_2(D^0)$. By definition there exists a probability measure $\bar\P\in\c P_2(\bar\Omega)$ such that
        \begin{align}
            d\bar X_u &= b(\bar r_u,\tilde\mu_{\bar r_u},\bar X_u,\bar\xi_u) d\bar r_u + \sigma(\bar r_u,\tilde\mu_{\bar r_u},\bar X_u,\bar\xi_u) d\bar W_{\bar r_u} + \gamma(\bar r_u,\bar X_u,\bar\xi_u) d\bar\xi_u,\quad u\in [0,1],\quad \bar X_0 = x_{0-}.
            \label{eq lemma a1 original sdes}
        \end{align}
        For every $K>0$, we introduce the truncated control process
        \[
        \bar\xi^K_u \coloneqq \bar\xi_u\land K,\quad u\in [0,1]
        \]
        and consider the corresponding state processes
        \begin{align}
            d\bar X^K_u &= b(\bar r_u,\tilde\mu_{\bar r_u},\bar X^K_u,\bar\xi^K_u) d\bar r_u + \sigma(\bar r_u,\tilde\mu_{\bar r_u},\bar X^K_u,\bar\xi^K_u) d\bar W_{\bar r_u} + \gamma(\bar r_u,\bar X^K_u,\bar\xi^K_u) d\bar\xi^K_u,\quad u\in [0,1],\quad \bar X^K_0 = x_{0-},
        \end{align}
        along with the parametrisations $\nu^K\coloneqq \bar\P_{(\bar X^K,\bar\xi^K,\bar r)}\in\bar{\c A}(\tilde\mu)$ of $\mu^K \coloneqq \c S\#\nu^K$. By a standard Gronwall argument, $(\bar X^K,\bar\xi^K,\bar r) \to (\bar X,\bar\xi,\bar r)$ in $L^2$ and thus $\nu^K\to\nu$ in $\c P_2(C)$ as $K\to\infty$. Since $\nu$ is a parametrisation of $\mu$ and $\nu^K$ is a parametrisation of $\mu^K$ it follows from \cref{lemma S continuous} that
        \[
            \mu^K\to\mu \quad \mbox{in} \quad \c P_2(D^0).
        \]

        \item[\textbf{Step 2. \emph{Approximation with Lipschitz parametrisations.}}]
        Let us now fix $K$ and reparametrise the parametrisation $\nu^K$ of $\mu^K$ to be Lipschitz continuous. To this end, we introduce for every $\varepsilon>0$ the time change
        \[
        \beta^{K,\varepsilon}_u(\omega) \coloneqq \frac{u+\varepsilon\bigr(\bar r_u(\omega) + \Var(\bar\xi^K(\omega),[0,u])\bigr)}{1+\varepsilon(T+lK)},\qquad u\in [0,1],
        \]
        where $\Var$ denotes the total Variation. We denote the generalised inverse time change by $\bar\beta^{K,\varepsilon}_v(\omega) \coloneqq \inf\{u\in [0,1]\mid \beta^{K,\varepsilon}_u > v\}\land 1$ and define the following reparametrised processes:
        \[
        (\bar X^{K,\varepsilon}_u,\bar\xi^{K,\varepsilon}_u,\bar r^{K,\varepsilon}_u) \coloneqq (\bar X^K_{\bar\beta^{K,\varepsilon}_u},\bar\xi^K_{\bar\beta^{K,\varepsilon}_u},\bar r_{\bar\beta^{K,\varepsilon}_u}),\qquad u\in [0,1].
        \]
        By construction $(\bar\xi^{K,\varepsilon},\bar r^{K,\varepsilon})$ is Lipschitz continuous: due to the monotonicity of $\bar\xi^K,\bar r$ and $\bar\beta^{K,\varepsilon}$ it holds for any $0\leq u\leq v\leq 1$ that
        \begin{align}
            |\bar\xi^{K,\varepsilon}_v - \bar\xi^{K,\varepsilon}_u| + |\bar r^{K,\varepsilon}_v - \bar r^{K,\varepsilon}_u|
            &\leq \Bigl|\frac 1 \varepsilon(\bar\beta^{K,\varepsilon}_v - \bar\beta^{K,\varepsilon}_u) + \bar r_{\bar\beta^{K,\varepsilon}_v} - \bar r_{\bar\beta^{K,\varepsilon}_u} + \Var(\bar\xi^K,[\bar\beta^{K,\varepsilon}_u,\bar\beta^{K,\varepsilon}_v])\Bigr| \\
            &\leq \frac{1+\varepsilon(T+lK)}\varepsilon |v-u|.
            \label{eq lemma a1 lipschitz continuity epsilon rescaling}
        \end{align}
        Further, we also note that for $u\in [0,1]$, using $\beta^{K,\varepsilon}_{\bar\beta^{K,\varepsilon}_u} = u\land \beta^{K,\varepsilon}_1$,
        \begin{align}
            |\bar\beta^{K,\varepsilon}_u - u|
            &\leq |\bar\beta^{K,\varepsilon}_u - \beta^{K,\varepsilon}_{\bar\beta^{K,\varepsilon}_u}| + |u\land \beta^{K,\varepsilon}_1 - u|\\
            &\leq \Bigl|\bar\beta^{K,\varepsilon}_u - \frac{\bar\beta^{K,\varepsilon}_u+\varepsilon\bigr(\bar r_{\bar\beta^{K,\varepsilon}_u}(\omega) + \Var(\bar\xi^K(\omega),[0,\bar\beta^{K,\varepsilon}_u])\bigr)}{1+\varepsilon(T+lK)} \Bigr| + \Bigl|1 - \frac{1+\varepsilon\bigr(T + \Var(\bar\xi^K(\omega),[0,1])\bigr)}{1+\varepsilon(T+lK)} \Bigr| \\
            &\leq 2\varepsilon(T+lK).
            \label{eq lemma a1 convergence epsilon rescaling}
        \end{align}
        Together with $(\bar X^K,\bar\xi^K,\bar r)_K$ being $\bar\P$-a.s.\@ uniformly continuous, this implies that
        \[
        (\bar X^{K,\varepsilon},\bar\xi^{K,\varepsilon},\bar r^{K,\varepsilon}) \to (\bar X^K,\bar\xi^K,\bar r)\text{ in } C([0,1];\R^d\times\R^l\times [0,T])\qquad\bar\P\text{-a.s.},\qquad\text{ as }\varepsilon\to 0,
        \]
        and thus
        \[
        \nu^{K,\varepsilon}\coloneqq \bar\P_{(\bar X^{K,\varepsilon},\bar\xi^{K,\varepsilon},\bar r^{K,\varepsilon})} \to \nu^K,\quad\text{in} \quad \c P_2(C([0,1];\R^d\times\R^l\times [0,T])).
        \]
        In view of \cref{lemma S continuous}, since by construction $\nu^{K,\varepsilon}\in\bar{\c A}(\tilde\mu)$ this implies that 
        \[
            \mu^{K,\varepsilon}\coloneqq \c S\#\nu^{K,\varepsilon}\to\mu^K \quad \mbox{in} \quad \c P_2(D^0).
        \] 

        Finally, by choosing a suitable subsequence such $\varepsilon\to 0$ fast enough while $K\to\infty$, we obtain the desired sequence. If $(\mu_\xi,\nu_{\bar\xi}) \in \c P_q(D^0([0,T];\R^l))\times\c P_q(C([0,1];\R^l))$, then this construction also satisfies that 
        \[
            (\mu^{K,\varepsilon}_\xi,\nu^{K,\varepsilon}_{\bar\xi}) \to (\mu_\xi,\nu_{\bar\xi}) \quad \mbox{in} \quad \c P_q(D^0([0,T];\R^l))\times\c P_q(C([0,1];\R^l)).
        \]
    \end{enumerate}
\end{proof}

\begin{lemma}\label{lemma a1.5}
    Let \cref{assumptions state dynamics} hold.
    Let $\tilde\mu\in\c P_2(D^0)$ be a given measure flow and $\nu\in\bar{\c A}(\tilde\mu)$ be a Lipschitz continuous parametrisation of $\mu\in \c P_2(D^0)$. Then there exists a sequence of Lipschitz continuous controls $(\mu^n)_n\subseteq \c A(\tilde\mu)$ with Lipschitz continuous parametrisations $(\nu^n)_n\subseteq\bar{\c A}(\tilde\mu)$ such that
    \[
    (\mu^n,\nu^n) \to (\mu,\nu)\qquad\text{ in }\c P_2(D^0)\times \c P_2(C).
    \]
    Further if $(\mu_\xi,\nu_{\bar\xi}) \in \c P_q(D^0([0,T];\R^l))\times\c P_q(C([0,1];\R^l))$ for some $q > 2$, then we can choose $(\mu^n,\nu^n)_n$ such that additionally
    \[
    (\mu^n_\xi,\nu^n_{\bar\xi}) \to (\mu_\xi,\nu_{\bar\xi})\qquad\text{ in }\c P_q(D^0([0,T];\R^l))\times\c P_q(C([0,1];\R^l)).
    \]
\end{lemma}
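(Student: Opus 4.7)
Although $\nu$ is already a Lipschitz parametrisation, the time change $\bar r$ may be constant on intervals of positive length, during which $\bar\xi$ is free to move. After applying $\c S$ these movements appear as jumps of $\xi$ in the unparametrised picture, so $\mu = \c S\#\nu$ need not be a Lipschitz, or even continuous, control. The plan is to slightly perturb $\bar r$ so that it becomes strictly increasing with slope bounded below, which will force its generalised inverse $r^n$ to be Lipschitz and the resulting $\xi^n = \bar\xi\circ r^n$ to be Lipschitz continuous on $[0,T]$.

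Concretely, I would fix a weak solution $\bar\P$ to \eqref{eq weak solution parametrisation SDE} with marginal law $\nu$ and define
\[
\bar r^n_u \coloneqq \tfrac{n-1}{n}\bar r_u + \tfrac{T}{n}u,\qquad u\in[0,1],
\]
which is Lipschitz with slope bounded below by $T/n$, and which still satisfies $\bar r^n_0 = 0$, $\bar r^n_1 = T$. I would then let $\bar X^n$ be the solution of the time-changed SDE driven by $\bar\xi$, $\bar W$ and the new time change $\bar r^n$,
\[
d\bar X^n_u = b(\bar r^n_u,\tilde\mu_{\bar r^n_u},\bar X^n_u,\bar\xi_u)d\bar r^n_u + \sigma(\bar r^n_u,\tilde\mu_{\bar r^n_u},\bar X^n_u,\bar\xi_u)d\bar W_{\bar r^n_u} + \gamma(\bar r^n_u,\bar X^n_u,\bar\xi_u)d\bar\xi_u,\quad \bar X^n_0=x_{0-},
\]
and set $\nu^n \coloneqq \bar\P_{(\bar X^n,\bar\xi,\bar r^n)}$ and $\mu^n \coloneqq \c S\#\nu^n$. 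Then $\nu^n \in \bar{\c A}(\tilde\mu)$ will be a Lipschitz parametrisation, and because $\bar r^n$ has slope at least $T/n$ its generalised inverse $r^n$ is $(n/T)$-Lipschitz, so $\xi^n = \bar\xi\circ r^n$ is Lipschitz continuous. A change of variables $t = \bar r^n_u$ in the above SDE then shows that $X^n = \bar X^n \circ r^n$ solves the standard continuous SDE \eqref{eq continuous singular control state dynamics}, so $\mu^n \in \c C(\tilde\mu) \subseteq \c A(\tilde\mu)$ without invoking \cref{assumptions gamma path independent}, since no jumps arise in $\xi^n$.

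To establish $(\mu^n,\nu^n) \to (\mu,\nu)$, I would note that $\bar r^n \to \bar r$ uniformly in $u$, so the coefficients evaluated along $\bar r^n$ and $\bar r$ converge. A standard Gronwall argument applied to the difference $\bar X^n - \bar X$, using the Lipschitz continuity of $b,\sigma$ from \cref{assumption b sigma continuous}, the boundedness and Lipschitz continuity of $\gamma$ from \cref{assumptions gamma bounded continuous}, and the Burkholder-Davis-Gundy inequality, yields $\bar X^n \to \bar X$ in $L^2$. Together with $\bar\xi^n = \bar\xi$, this gives $\nu^n \to \nu$ in $\c P_2(C)$, and the Lipschitz continuity of $\c S$ from \cref{lemma S continuous} transfers this to $\mu^n \to \mu$ in $\c P_2(D^0)$. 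For the $\c P_q$-statement, $\nu^n_{\bar\xi} = \nu_{\bar\xi}$ makes that marginal convergence immediate, while $\sup_t |\xi^n_t| \leq \sup_u|\bar\xi_u|$ provides the $q$-uniform integrability needed to upgrade $\mu^n_\xi \to \mu_\xi$ to the $\c P_q$-topology via Vitali's theorem. The main technical obstacle is the SDE-stability estimate with time changes that differ but converge uniformly: some care is needed with the time-changed stochastic integral, but pulling it back to the original time scale via the change-of-variable formula for stochastic integrals reduces this to a standard Gronwall-type bound.
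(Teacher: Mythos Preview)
Your overall strategy --- perturb $\bar r$ to have slope bounded below, so that the inverse $r^n$ is Lipschitz and $\xi^n=\bar\xi\circ r^n$ becomes a Lipschitz control --- is exactly what the paper does. The perturbation $\bar r^n_u=\tfrac{n-1}{n}\bar r_u+\tfrac{T}{n}u$ is the paper's $\bar r^\delta_u=\tfrac{\bar r_u+\delta Tu}{1+\delta}$ with $\delta=\tfrac{1}{n-1}$.

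The gap is in your treatment of the diffusion term. You drive the approximating SDE by $d\bar W_{\bar r^n_u}$ with the \emph{same} Brownian motion $\bar W$, and then claim that a ``standard Gronwall argument'' yields $\bar X^n\to\bar X$ in $L^2$. But on $[0,1]$ you must compare
\[
\int_0^u \sigma(\bar r^n_v,\dotsc)\,d\bar W_{\bar r^n_v}\quad\text{against}\quad \int_0^u \sigma(\bar r_v,\dotsc)\,d\bar W_{\bar r_v},
\]
i.e.\ stochastic integrals against two \emph{different} time-changed martingales $\bar W_{\bar r^n}$ and $\bar W_{\bar r}$. Their difference is not a martingale, and its quadratic variation involves the cross-variation $[\bar W_{\bar r^n},\bar W_{\bar r}]$, which you cannot control just from $\|\bar r^n-\bar r\|_\infty\to 0$. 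Your suggested fix of ``pulling back to $[0,T]$'' does not help either: on $[0,T]$ the limiting process $X=\bar X\circ r$ has jumps (that is the whole point), so no Gronwall comparison of $X^n$ with $X$ is available there.

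The paper resolves this by exploiting the weak formulation to \emph{change the Brownian motion}. It introduces, on an enlarged space, an auxiliary Brownian motion $\bar B$ on $[0,1]$ with $d\bar W_{\bar r_u}=\sqrt{\dot{\bar r}_u}\,d\bar B_u$, and then \emph{defines} a new Brownian motion $\bar W^\delta$ for the approximating dynamics via $d\bar W^\delta_{\bar r^\delta_u}=\sqrt{\dot{\bar r}^\delta_u}\,d\bar B_u$. Both SDEs are then driven by the same $\bar B$ on the same interval $[0,1]$, with integrands $\sigma\sqrt{\dot{\bar r}}$ and $\sigma\sqrt{\dot{\bar r}^\delta}$; now BDG plus $|\dot{\bar r}^\delta-\dot{\bar r}|\le \delta(C_{\bar r}+T)$ feed directly into a genuine Gronwall bound. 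This construction also makes the required filtration compatibility for $\nu^\delta\in\bar{\c A}(\tilde\mu)$ transparent, a point you have not addressed for your reused $\bar W$. Once you adopt this device, the rest of your outline (Lipschitz continuity of $\xi^n$, $\nu^n\to\nu$ via Gronwall, $\mu^n\to\mu$ via \cref{lemma S continuous}, and the $\c P_q$-upgrade from $\nu^n_{\bar\xi}=\nu_{\bar\xi}$) goes through exactly as you indicate.
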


\begin{proof}
    This proof is based on the construction in \autocite[Lemma B.2]{denkert2023extended}, albeit again in a simpler single layer setting. We proceed in three steps. 
    
    \begin{enumerate}[wide]
        \item[\textbf{Step 1. \emph{The control process.}}]
        Since $\nu\in\bar{\c A}(\tilde\mu)$ is a parametrisation, there exists a probability measure $\bar\P\in\c P_2(\bar\Omega)$ such that
        \begin{align}
            d\bar X_u &= b(\bar r_u,\tilde\mu_{\bar r_u},\bar X_u,\bar\xi_u) d\bar r_u + \sigma(\bar r_u,\tilde\mu_{\bar r_u},\bar X_u,\bar\xi_u) d\bar W_{\bar r_u} + \gamma(\bar r_u,\bar X_u,\bar\xi_u) d\bar\xi_u,\quad u\in [0,1],\quad \bar X_0 = x_{0-}.
            \label{eq lemma a1.5 original sdes}
        \end{align}
        
        Since $\nu$ is a Lipschitz parametrisation, the processes $\bar\xi$ and $\bar r$ are Lipschitz continuous. The desired approximating parametrisations $(\mu^\delta)_\delta$ will be obtained by only perturbing the reparametrised time scale $\bar r$ such that the resulting time scale $\bar r^\delta$ and its inverse $r^\delta$ are Lipschitz continuous, and then working with the unparametrised Lipschitz continuous control processes $\xi^\delta \coloneqq \bar\xi_{r^\delta}$. 

        To this end, we introduce, for every $\delta > 0$, the perturbed reparametrised time
        \[
        \bar r^\delta_u \coloneqq \frac{\bar r_u+\delta T u}{1+\delta},\qquad u\in [0,1].
        \]
        We denote its inverse by $r^\delta$ and introduce the unparametrised control processes
        \[
        \xi^\delta_{t} \coloneqq \bar\xi_{r^\delta_{t}},\qquad t\in [0,T].
        \]

        The Lipschitz continuity of $\bar r$ implies the Lipschitz continuity of $\bar r^\delta$. Furthermore, the function $r^\delta$ is Lipschitz continuous since for any $0\leq s\leq t\leq T$, by montonicity of $\bar r$ and $r^\delta$,
        \begin{align}
            |r^\delta_{t} - r^\delta_{s}|
            \leq \Bigl|\frac 1 {\delta T} (\bar r_{r^\delta_{t}} - \bar r_{r^\delta_{s}}) + (r^\delta_{t} - r^\delta_{s}) \Bigr|
            =\frac{1+\delta}{\delta T} |t-s|.
        \end{align}
        Since $\bar\xi$ is Lipschitz continuous uniformly in $\omega$ with some Lipschitz constant $C_{\bar\xi}$, this implies that the control process $\xi^\delta$ is Lipschitz continuous uniformly in $\omega$ as well: for $0\leq s\leq t\leq T$,
        \[
        |\xi^\delta_{t} - \xi^\delta_{s}|
        = |\bar\xi_{r^\delta_{t}} - \bar\xi_{r^\delta_{s}}|
        \leq C_{\bar\xi} |r^\delta_{t} - r^\delta_{s}|.
        \]
    
        \item[\textbf{Step 2. \emph{The state process.}}]
        To guarantee the convergence of the corresponding state processes, we use the fact that we are working weak solutions and are thus allowed to choose the Brownian motion. Specifically, we define a Brownian motions for $(\bar\xi,\bar r^\delta)$ and $\xi^\delta$ in terms of the Brownian motion $\bar W$ corresponding to the process $(\bar X,\bar\xi,\bar r)$ introduced in \eqref{eq lemma a1.5 original sdes}. 

        We start by rewriting \eqref{eq lemma a1.5 original sdes} by introducing a Brownian motion $\bar B$ on some enlarged probability space $(\hat\Omega,\hat{\c F},\hat\P)$ such that
        \[
        d\bar W_{\bar r_u} = \sqrt{\dot{\bar r}_u} d\bar B_u,\qquad u\in [0,1].
        \]
        This can be achieved by enlarging our probability space to allow for an independent Brownian motion $\hat B$ and then defining
        \[
        \bar B_v \coloneqq \int_0^v \1_{\{\dot{\bar r}_u \not= 0\}} \frac 1 {\sqrt{\dot{\bar r}_u}} d\bar W_{\bar r_u} + \int_0^v \1_{\{\dot{\bar r}_u=0\}} d\hat B_u,\qquad v\in [0,1].
        \]
        Then $\bar B$ is an $\b F^{\bar B,\bar X,\bar\xi,\bar r}$-Brownian motion. Using this Brownian motion, we can now rewrite \eqref{eq lemma a1.5 original sdes} as
        \begin{align}
            d\bar X_u &= b(\bar r_u,\tilde\mu_{\bar r_u},\bar X_u,\bar\xi_u) d\bar r_u + \sigma(\bar r_u,\tilde\mu_{\bar r_u},\bar X_u,\bar\xi_u) \sqrt{\dot{\bar r}_u} d\bar B_u + \gamma(\bar r_u,\bar X_u,\bar\xi_u) d\bar\xi_u,\quad u\in [0,1],\quad \bar X_0 = x_{0-}.
            \label{eq lemma a1.5 rewritten sdes}
        \end{align}
        In terms of $\bar B$ we then define the Brownian motion $\bar W^\delta$ for our new state process in such a way that
        \[
        d\bar W^\delta_{\bar r^\delta_u} = \sqrt{\dot{\bar r}^\delta_u} d\bar B_u,\qquad u\in [0,1],
        \]
        by letting
        \[
        \bar W^\delta_t \coloneqq \int_0^{r^\delta_t} \sqrt{\dot{\bar r}^\delta_u} d\bar B_u,\qquad t\in [0,T].
        \]
        Then $\bar W^\delta$ is an $(\c F^{\bar W^\delta}_{t}\lor \c F^{\bar X,\bar\xi,\bar r}_{r^\delta_{t}})_{t\in [0,T]}$-Brownian motion and hence also an $(\c F^{\bar W^\delta}_{t}\lor \c F^{\bar X,\bar\xi,\bar r^\delta}_{r^\delta_{t}})_{t\in [0,T]}$-Brownian motion as $r^\delta$ is $\b F^{\bar r}$-adapted. We now define $X^\delta$ and $\bar X^\delta$ as the solutions to the following SDEs
        \begin{align}
            d\bar X^\delta_u &= b(\bar r^\delta_u,\tilde\mu_{\bar r^\delta_u},\bar X^\delta_u,\bar\xi_u) d\bar r^\delta_u + \sigma(\bar r^\delta_u,\tilde\mu_{\bar r^\delta_u},\bar X^\delta_u,\bar\xi_u) d\bar W^\delta_{\bar r^\delta}
            + \gamma(\bar r^\delta_u,\bar X^\delta_u,\bar\xi_u) d\bar\xi_u,\quad u\in [0,1],\quad \bar X^\delta_0 = x_{0-},\\
            dX^\delta_t &= b(t,\tilde\mu_t,X^\delta_t,\xi^\delta_t) dt + \sigma(t,\tilde\mu_t,X^\delta_t,\xi^\delta_t) d\bar W^\delta_t
            + \gamma(t,X^\delta_t,\xi^\delta_t) d\xi^\delta_t,\quad t\in [0,T],\quad X^\delta_{0-} = x_{0-}.
            \label{eq lemma a1.5 approximating sdes}
        \end{align}
        
        By construction $\bar W^\delta$ is also an $(\c F^{\bar W^\delta}_{t}\lor \c F^{\bar X^\delta,\bar\xi,\bar r^\delta}_{r^\delta_{t}})_{t\in [0,T}$- and $\b F^{\bar W^\delta,X^\delta,\xi^\delta}$-Brownian motion. The approximating control $\mu^\delta\coloneqq \hat\P_{(X^\delta,\xi^\delta)}$ thus belongs to $\c A(\tilde\mu)$ and $\nu^\delta\coloneqq \hat\P_{(\bar X^\delta,\bar\xi,\bar r^\delta)} \in \bar{\c A}(\tilde\mu)$ is a parametrisation of $\mu^\delta$.

        \item[\textbf{Step 3. \emph{Verification.}}]
        We have seen that the control $\mu^\delta\in \c A(\tilde\mu)$ and its parametrisation $\nu^\delta\in\bar{\c A}(\tilde\mu)$ are both Lipschitz continuous. It remains to show that $(\mu^\delta,\nu^\delta) \to (\mu,\nu)$ in $\c P_2(D^0)\times\c P_2(C)$ as $\delta\to 0$.
        
        We first focus on the convergence of the processes $(\bar X^\delta,\bar\xi,\bar r^\delta)$, which then implies the convergence of the parametrisations $\nu^\delta$. We start with the convergence of the time scales $\bar r^\delta$. For $u\in [0,1]$,
        \begin{align}
            |\bar r^\delta_u - \bar r_u|
            &\leq \frac \delta {1+\delta} |T u - \bar r_u|\leq \delta T,
        \end{align}
        which implies that $\bar r^\delta \to \bar r$ uniformly in $u,\omega$ as $\delta\to 0$. Furthermore,
        \begin{align}
            |\dot{\bar r}^\delta_u - \dot{\bar r}_u| \leq \frac {\delta \dot{\bar r}_u + \delta T} {1+\delta}  \leq  \delta (C_{\bar r} + T) \to 0,\qquad\text{as }\delta\to 0,
            \label{eq lemma a2 derivative r convergence}
        \end{align}
        where $C_{\bar r}$ is the Lipschitz constant of $\bar r$ uniform in $\omega\in\hat\Omega$.
        
        To prove the convergence of the state process $\bar X^\delta$ we rely on a Gronwall-type argument. We start by estimating for $u_*\in [0,1]$,
        \begin{align}
            \E^{\hat\P}\Bigl[\sup_{u\in [0,u_*]} |\bar X_u - \bar X^\delta_u|^2\Bigr]
            &\leq 4(I_1 + I_2 + I_3),
        \end{align}
        where
        \begin{align}
            I_1 &= \E^{\hat\P}\Bigl[\sup_{u\in [0,u_*]} \Bigl|\int_0^u b(\bar r_v,\tilde\mu_{\bar r_v},\bar X_v,\bar\xi_v) d\bar r_v
            - \int_0^u b(\bar r^\delta_v,\tilde\mu_{\bar r^\delta_v},\bar X^\delta_v,\bar\xi_v) d\bar r^\delta_v \Bigr|^2 \Bigr], \\
            I_2 &= \E^{\hat\P}\Bigl[\sup_{u\in [0,u_*]} \Bigl|\int_0^u \sigma(\bar r_v,\tilde\mu_{\bar r_v},\bar X_v,\bar\xi_v) d\bar W_{\bar r_v}
            - \int_0^u \sigma(\bar r^\delta_v,\tilde\mu_{\bar r^\delta_v},\bar X^\delta_v,\bar\xi_v) d\bar W^\delta_{\bar r^\delta_v} \Bigr|^2 \Bigr] \\
            &= \E^{\hat\P}\Bigl[\sup_{u\in [0,u_*]} \Bigl|\int_0^u \sigma(\bar r_v,\tilde\mu_{\bar r_v},\bar X_v,\bar\xi_v)\sqrt{\dot{\bar r}_v} d\bar B_v
            - \int_0^u \sigma(\bar r^\delta_v,\tilde\mu_{\bar r^\delta_v},\bar X^\delta_v,\bar\xi_v) \sqrt{\dot{\bar r}^\delta_v} d\bar B_v \Bigr|^2 \Bigr], \\
            I_3 &= \E^{\hat\P}\Bigl[\sup_{u\in [0,u_*]} \Bigl|\int_0^u \gamma(\bar r_v,\bar X_v,\bar\xi_v) d\bar \xi_v
            - \int_0^u \gamma(\bar r^\delta_v,\bar X^\delta_v,\bar\xi_v) d\bar \xi_v \Bigr|^2 \Bigr] .
        \end{align}
        Bounding the first two terms $I_1$ and $I_2$ from above is standard using the Lipschitz continuity of $b,\sigma$ and $\bar r$ together with the uniform convergence in \eqref{eq lemma a2 derivative r convergence}.
        
        To bound the third term $I_3$, we observe that, using the monotonicity and Lipschitz continuity of $\bar\xi$,
        \begin{align}
            I_3 &\leq C_{\bar\xi}^2 \E^{\hat\P}\Bigl[\int_0^{u_*} \bigl|\gamma(\bar r_v,\bar X_v,\bar\xi_v) 
            - \gamma(\bar r^\delta_v,\bar X^\delta_v,\bar\xi_v) \bigr|^2 dv \Bigr]\\
            &\leq 2C_{\bar\xi}^2 \E^{\hat\P}\Bigl[\int_0^{u_*} \bigl|\gamma(\bar r_v,\bar X_v,\bar\xi_v) 
            - \gamma(\bar r^\delta_v,\bar X_v,\bar\xi_v) \bigr|^2 dv \Bigr]
            + 2 C_{\bar\xi}^2 \E^{\hat\P}\Bigl[\int_0^{u_*} \bigl|\gamma(\bar r^\delta_v,\bar X_v,\bar\xi_v) 
            - \gamma(\bar r^\delta_v,\bar X^\delta_v,\bar\xi_v) \bigr|^2 dv \Bigr]\\
            &\leq 2C_{\bar\xi}^2 \E^{\hat\P}\Bigl[\sup_{v\in [0,1]} \bigl|\gamma(\bar r_v,\bar X_v,\bar\xi_v) 
            - \gamma(\bar r^\delta_v,\bar X_v,\bar\xi_v) \bigr|^2 \Bigr]
            + 2 C_{\bar\xi}^2 C_\gamma^2 \E^{\hat\P}\Bigl[\int_0^{u^*} |\bar X_v - \bar X^\delta_v|^2 dv \Bigr].
        \end{align}
        The first term vanishes due to the continuity of $\gamma$ by dominated convergence. By applying Gronwall's inequality, this implies that
        \begin{align}
            \c W_2^2(\nu^\delta,\nu) \leq \E^{\hat\P}\Bigl[\sup_{u\in [0,1]} |\bar X_u - \bar X^\delta_u|^2\Bigr] + \E^{\hat\P}\Bigl[\sup_{u\in [0,1]} |\bar r_u - \bar r^\delta_u|^2\Bigr] \to 0,\qquad\text{as }\delta\to 0.
        \end{align}
        
        Since $\nu^\delta\to\nu$ in $\c P_2(C)$ and $\nu$ is a parametrisation of $\mu$ and $\nu^\delta$ are parametrisations of $\mu^\delta$, it follows from \cref{lemma S continuous} that $\mu^\delta\to\mu$ in $\c P_2(D^0)$. If $(\mu_\xi,\nu_{\bar\xi}) \in \c P_q(D^0([0,T];\R^l))\times\c P_q(C([0,1];\R^l))$, then we also obtain that
        \[
        (\mu^\delta_\xi,\nu^\delta_{\bar\xi}) \to (\mu_\xi,\nu_{\bar\xi}) \quad \mbox{in} \quad \c P_q(D^0([0,T];\R^l))\times\c P_q(C([0,1];\R^l)).
        \]
    \end{enumerate}
\end{proof}

The final lemma guarantees that we can replace $\mu_n\in \c A(\tilde\mu)$ and $\nu_n\in\bar{\c A}(\tilde\mu)$ by $\mu'_n\in \c A(\tilde\mu_{k_n})$ and $\nu'_n\in\bar{\c A}(\tilde\mu_{k_n})$, respectively.

\begin{lemma}\label{lemma a2}
    Let \cref{assumptions state dynamics} hold.
    Let $\tilde\mu\in\c P_2(D^0)$ be a given measure flow and let $\mu\in\c A(\tilde\mu)$ be a Lipschitz continuous control and $\nu$ a Lipschitz continuous parametrisation of $\mu$. Then for every other measure flow $\tilde\mu'\in\c P_2(D^0)$, there exists a Lipschitz continuous control $\mu'\in\c A(\tilde\mu')$ with the same Lipschitz constant as $\mu$ and a corresponding Lipschitz continuous parametrisation $\nu'\in\bar{\c A}(\tilde\mu')$ with the same Lipschitz constant as $\nu$, such that $\mu'_\xi = \mu_\xi$, $\nu'_{\bar\xi} = \nu_{\bar\xi}$ and
    \[
    \c W_2(\nu,\nu') \leq C_{\nu} \c W_2(\tilde\mu,\tilde\mu'),
    \]
    where the constant $C_{\nu}$ only depends on the Lipschitz constant of $\nu$.
\end{lemma}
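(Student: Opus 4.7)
The plan is to construct $(\mu',\nu')$ by keeping the reparametrised time scale $\bar r$, the control process $\bar\xi$ and the driving Brownian motion unchanged, and only re-solving the parametrised SDE for the state process with the new measure flow $\tilde\mu'$ in place of $\tilde\mu$. Starting from a probability space $(\bar\Omega,\bar{\c F},\bar\P)$ carrying $(\bar X,\bar\xi,\bar r,\bar W)$ such that $\bar\P_{(\bar X,\bar\xi,\bar r)}=\nu$ satisfies \eqref{eq weak solution parametrisation SDE} with measure flow $\tilde\mu$, I would introduce the auxiliary Brownian motion $\bar B$ with $d\bar W_{\bar r_u}=\sqrt{\dot{\bar r}_u}\,d\bar B_u$ exactly as in Step 2 of the proof of \cref{lemma a1.5} and solve the standard It\^{o} SDE
\begin{align}
d\bar X'_u &= b(\bar r_u,\tilde\mu'_{\bar r_u},\bar X'_u,\bar\xi_u)\,d\bar r_u+\sigma(\bar r_u,\tilde\mu'_{\bar r_u},\bar X'_u,\bar\xi_u)\sqrt{\dot{\bar r}_u}\,d\bar B_u+\gamma(\bar r_u,\bar X'_u,\bar\xi_u)\,d\bar\xi_u,\\
\bar X'_0 &= x_{0-},
\end{align}
on $[0,1]$. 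Since $\bar r$ and $\bar\xi$ are Lipschitz with constants fixed by $\nu$, the densities $\dot{\bar r},\dot{\bar\xi}$ are bounded, and \cref{assumption b sigma continuous,assumptions gamma bounded continuous} yield a unique strong solution $\bar X'$. Setting $\nu':=\bar\P_{(\bar X',\bar\xi,\bar r)}$ and reversing the Brownian-motion substitution gives $\nu'\in\bar{\c A}(\tilde\mu')$; \cref{lemma reducing parametrisations to controls} then yields $\mu':=\c S\#\nu'\in\c A(\tilde\mu')$.

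All qualitative claims follow by construction. Under $\bar\P$, the pair $(\bar\xi,\bar r)$ coincides for $\nu$ and $\nu'$, so the parametrisation Lipschitz constant of $\nu'$ equals that of $\nu$; writing $r$ for the generalised inverse of $\bar r$, the identity $\xi'=\bar\xi\circ r=\xi$ gives $\mu'_\xi=\mu_\xi$, $\nu'_{\bar\xi}=\nu_{\bar\xi}$, and $\mu'$ inherits the Lipschitz constant of $\mu$.

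For the Wasserstein estimate, I would apply a standard BDG plus Gronwall argument to the $L^2$-norm of $\bar X_u-\bar X'_u$. Splitting the difference into its three integral contributions and using Lipschitz continuity of $b,\sigma,\gamma$ in $(m,x,\xi)$ from \cref{assumptions state dynamics} together with the uniform bounds on $\dot{\bar r}$ and $\dot{\bar\xi}$, one obtains an inequality of the form
\[
\E^{\bar\P}\Bigl[\sup_{u\leq u_*}|\bar X_u-\bar X'_u|^2\Bigr]\leq C_{\nu,1}\int_0^{u_*}\E^{\bar\P}[|\bar X_v-\bar X'_v|^2]\,dv+C_{\nu,2}\int_0^1\c W_2^2(\tilde\mu_{\bar r_v},\tilde\mu'_{\bar r_v})\,dv,
\]
with constants depending only on the Lipschitz data of $\nu$ and of the (fixed) coefficients. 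Gronwall's inequality, combined with $\c W_2^2(\nu,\nu')\leq \E^{\bar\P}\bigl[\sup_u|\bar X_u-\bar X'_u|^2\bigr]+\E^{\bar\P}\bigl[\sup_u|\bar\xi_u-\bar\xi_u|^2+|\bar r_u-\bar r_u|^2\bigr]$ (the second expectation being zero by construction), then delivers the desired estimate. The main subtlety lies in the last bookkeeping step of dominating $\int_0^1\c W_2^2(\tilde\mu_{\bar r_v},\tilde\mu'_{\bar r_v})\,dv$ by $C\,\c W_2^2(\tilde\mu,\tilde\mu')$ on $\c P_2(D^0)$; this is handled by pushing an optimal coupling of $(\tilde\mu,\tilde\mu')$ through the evaluation maps and invoking Fubini, consistent with the convention under which the measure argument of $b,\sigma$ enters the Lipschitz assumption.
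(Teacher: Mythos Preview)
Your proposal is correct and follows essentially the same approach as the paper: keep $(\bar\xi,\bar r,\bar W)$ fixed, re-solve the parametrised state SDE with the new flow $\tilde\mu'$, and close with a Gronwall argument on $\sup_{u\le u_*}|\bar X_u-\bar X'_u|^2$. The only cosmetic differences are that the paper works directly with the time-changed integrals $d\bar W_{\bar r_v}$ rather than passing through the auxiliary Brownian motion $\bar B$, and it defines $X'$ directly as the solution of the (here standard, since $\xi$ is Lipschitz) state SDE instead of setting $\mu'=\c S\#\nu'$ via \cref{lemma reducing parametrisations to controls}---the latter formally invokes \cref{assumptions gamma path independent}, which is not assumed here, but this is immaterial because the Marcus-type and ordinary integrals coincide for continuous $\xi$.
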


\begin{proof}
    Since $\nu\in\bar{\c A}(\tilde\mu)$ is a parametrisation of $\mu\in\c A(\tilde\mu)$, by definition there exists a probability measure $\bar\P\in\c P_2(\bar\Omega)$ such that
    \begin{align}
        d\bar X_u &= b(\bar r_u,\tilde\mu_{\bar r_u},\bar X_u,\bar\xi_u) d\bar r_u + \sigma(\bar r_u,\tilde\mu_{\bar r_u},\bar X_u,\bar\xi_u) d\bar W_{\bar r_u} + \gamma(\bar r_u,\bar X_u,\bar\xi_u) d\bar\xi_u,\quad u\in [0,1],\quad \bar X_0 = x_{0-},\\
        dX_t &= b(t,\tilde\mu_t,X_t,\xi_t) dt + \sigma(t,\tilde\mu_t,X_t,\xi_t) d\bar W_t + \gamma(t,X_t,\xi_t) \diamond d\xi_t,\quad t\in [0,T],\quad X_{0-} = x_{0-}.
        \label{eq lemma a2 original sdes}
    \end{align}
    Now we define the processes $X'$ and $\bar X'$ as the solutions to the following SDEs
    \begin{align}
        d\bar X'_u &= b(\bar r_u,\tilde\mu'_{\bar r_u},\bar X'_u,\bar\xi_u) d\bar r_u + \sigma(\bar r_u,\tilde\mu'_{\bar r_u},\bar X'_u,\bar\xi_u) d\bar W_{\bar r_u} + \gamma(\bar r_u,\bar X'_u,\bar\xi_u) d\bar\xi_u,\quad u\in [0,1],\quad \bar X'_0 = x_{0-},\\
        dX'_t &= b(t,\tilde\mu'_t,X'_t,\xi_t) dt + \sigma(t,\tilde\mu'_t,X'_t,\xi_t) d\bar W_t + \gamma(t,X'_t,\xi_t) \diamond d\xi_t,\quad t\in [0,T],\quad X'_{0-} = x_{0-}.
        \label{eq lemma a2 approximating sdes}
    \end{align}
    We denote the corresponding measures by $\mu'\coloneqq \bar\P_{(X',\xi)}$ and $\nu'\coloneqq \bar\P_{(\bar X',\bar\xi,\bar r)}$. By construction $\mu'\in\c A(\tilde\mu')$ is Lipschitz with the same Lipschitz constant as $\mu$ and $\nu'$ is a parametrisation of $\mu'$ and $\nu'\in\bar{\c A}(\tilde\mu')$ is Lipschitz with the same Lipschitz constant as $\nu$.

    To proof the desired estimate, we will use Gronwall's inequality. We start by noting that for $u_*\in [0,1]$,
    \begin{align}
        \E^{\bar\P}\Bigl[\sup_{u\in [0,u_*]} |\bar X_u - \bar X'_u|^2\Bigr]
        &\leq 4(I_1 + I_2 + I_3),
    \end{align}
    where
    \begin{align}
        I_1 &= \E^{\bar\P}\Bigl[\sup_{u\in [0,u_*]} \Bigl|\int_0^u b(\bar r_v,\tilde\mu_{\bar r_v},\bar X_v,\bar\xi_v) d\bar r_v
        - \int_0^u b(\bar r_v,\tilde\mu'_{\bar r_v},\bar X'_v,\bar\xi_v) d\bar r_v \Bigr|^2 \Bigr], \\
        I_2 &= \E^{\bar\P}\Bigl[\sup_{u\in [0,u_*]} \Bigl|\int_0^u \sigma(\bar r_v,\tilde\mu_{\bar r_v},\bar X_v,\bar\xi_v) d\bar W_{\bar r_v}
        - \int_0^u \sigma(\bar r_v,\tilde\mu'_{\bar r_v},\bar X'_v,\bar\xi_v) d\bar W_{\bar r_v} \Bigr|^2 \Bigr], \\
        I_3 &= \E^{\bar\P}\Bigl[\sup_{u\in [0,u_*]} \Bigl|\int_0^u \gamma(\bar r_v,\bar X_v,\bar\xi_v) d\bar \xi_v
        - \int_0^u \gamma(\bar r_v,\bar X'_v,\bar\xi_v) d\bar \xi_v \Bigr|^2 \Bigr] .
    \end{align}
    Then for the first term $I_1$, we note that
    \begin{align}
        I_1
        &\leq \E^{\bar\P}\Bigl[\int_0^{u_*} \bigl|b(\bar r_v,\tilde\mu_{\bar r_v},\bar X_v,\bar\xi_v) - b(\bar r_v,\tilde\mu'_{\bar r_v},\bar X'_v,\bar\xi_v) \bigr|^2 \dot{\bar r}_v d\bar r_v\Bigr]\\
        &\leq 2 C_{\bar r} C_b^2 \E^{\bar\P}\Bigl[\int_0^{u_*} \c W_2^2(\tilde\mu_{\bar r_v},\tilde\mu'_{\bar r_v}) + |\bar X_v - \bar X'_v|^2 d\bar r_v\Bigr]\\
        &\leq 2  C_{\bar r}^2 C_b^2 \int_0^{u_*} \E^{\bar\P}\Bigl[\sup_{v\in [0,u]} |\bar X_v - \bar X'_v|^2 \Bigr] du + 2  C_{\bar r} C_b^2 \int_0^T \c W_2^2(\tilde\mu_t,\tilde\mu'_t) dt.
    \end{align}
    Similarly, for the second term $I_2$,
    \begin{align}
        I_2
        &\leq \E^{\bar\P}\Bigl[\sup_{u\in [0,u_*]} \Bigl|\int_0^u \bigl(\sigma(\bar r_v,\tilde\mu_{\bar r_v},\bar X_v,\bar\xi_v) - \sigma(\bar r_v,\tilde\mu'_{\bar r_v},\bar X'_v,\bar\xi_v) \bigr)\sqrt{\dot{\bar r}_v} d\bar W_{\bar r_v} \Bigr|^2\Bigr]\\
        &\leq 8 C_\sigma^2 \E^{\bar\P}\Bigl[\int_0^{u_*} \c W_2^2(\tilde\mu_{\bar r_v},\tilde\mu'_{\bar r_v}) + |\bar X_v - \bar X'_v|^2 d\bar r_v\Bigr]\\
        &\leq 8 C_{\bar r} C_\sigma^2 \int_0^{u_*} \E^{\bar\P}\Bigl[\sup_{v\in [0,u]} |\bar X_v - \bar X'_v|^2 \Bigr] du + 8 C_\sigma^2 \int_0^T \c W_2^2(\tilde\mu_t,\tilde\mu'_t) dt.
    \end{align}
    Finally, for the third term $I_3$,
    \begin{align}
        I_3
        &\leq \E^{\bar\P}\Bigl[\sup_{u\in [0,u_*]} \Bigl|\int_0^u \gamma(\bar r_v,\bar X_v,\bar\xi_v) - \gamma(\bar r_v,\bar X'_v,\bar\xi_v) d\bar\xi_v \Bigr|^2\Bigr]\\
        &\leq C_\gamma^2 C_{\bar\xi}^2 \E^{\bar\P}\Bigl[\int_0^{u_*} |\bar X_v - \bar X'_v|^2 dv\Bigr]
        \leq C_\gamma^2 C_{\bar\xi}^2 \int_0^{u_*} \E^{\bar\P}\Bigl[\sup_{v\in [0,u]} |\bar X_v - \bar X'_v|^2 \Bigr] du.
    \end{align}
    In total, we obtain for all $u_*\in [0,1]$,
    \begin{align}
        &\E^{\bar\P}\Bigl[\sup_{u\in [0,u_*]} |\bar X_u - \bar X'_u|^2\Bigr]\\
        &\leq 4(2C_{\bar r}^2 C_b^2 + 8C_{\bar r} C_\sigma^2 + C_\gamma^2 C_{\bar\xi}^2) \int_0^{u_*} \E^{\bar\P}\Bigl[\sup_{v\in [0,u]} |\bar X_v - \bar X'_v|^2 \Bigr]
        + 4(2C_{\bar r} C_b^2 + 8C_\sigma^2) \int_0^T \c W_2^2(\tilde\mu_t,\tilde\mu'_t) dt.
    \end{align}
    Using Gronwall's Lemma, this leads us to
    \begin{align}
        \E^{\bar\P}\Bigl[\sup_{u\in [0,1]} |\bar X_u - \bar X'_u|^2\Bigr] 
        &\leq (8C_{\bar r} C_b^2 + 32C_\sigma^2) \int_0^T \c W_2^2(\tilde\mu_t,\tilde\mu'_t) dt \exp\Bigl(8C_{\bar r}^2 C_b^2 + 32C_{\bar r} C_\sigma^2 + 4C_\gamma^2 C_{\bar\xi}^2 \Bigr).
    \end{align}
\end{proof}

\let\c\predefinedC
\printbibliography

\end{document}